\newtheorem{theorem}{Theorem}[section]
\theoremstyle{definition}
\newtheorem{corollary}[theorem]{Corollary}
\newtheorem{definition}[theorem]{Definition}
\newtheorem{proposition}[theorem]{Proposition}
\newtheorem{remark}[theorem]{Remark}
\numberwithin{equation}{section}
\begin{document}

\title[Permanence for nonuniform hyperbolicity]{Permanence of nonuniform nonautonomous hyperbolicity for infinite-dimensional differential equations}

%    Remove any unused author tags.

%    author one information
\author[T. Caraballo]{Tom\'as Caraballo$^1$}
%\address{(T. Caraballo and J.A. Langa) Departamento de Ecuaciones Diferenciales y An\'alisis Num\'erico, Universidad de Sevilla, Spain.}
\email{caraball@us.es}
\thanks{$^1$ Departamento de Ecuaciones Diferenciales y An\'alisis Num\'erico, Universidad de Sevilla, Spain.}
%\thanks{$^1$ Partially supported by Spanish Ministerio de Econom\'{i}a y Competitividad and FEDER, project PGC2018-096540-B-I00, and Proyecto I+D+i Programa Operativo FEDER Andaluc\'{\i}a US-1254251.}
%  author two infortmation
\author[A. N. Carvalho]{Alexandre N. Carvalho$^2$}
%\address{(A. N. Carvalho and A. N. Oliveira Sousa) Instituto de Ci\^encias Matem\'aticas e de Computa\c c\~ao, Universidade de S\~ ao Paulo, Brazil.}
\thanks{$^2$ Instituto de Ci\^encias Matem\'aticas e de Computa\c c\~ao, Universidade de S\~ ao Paulo, Brazil.}
%\thanks{$^{2}$ Partially supported by FAPESP grant  2018/10997-6 and CNPq grant  303929/2015-4.}
\email{andcarva@icmc.usp.br}
%    	author three information

\author[J. A. Langa]{Jos\'e A. Langa$^1$}
\email{langa@us.es}

%		author four information

\author[A. N. Oliveira-Sousa]{Alexandre N. Oliveira-Sousa$^{2}$}
\email{alexandrenosousa@gmail.com}
%\thanks{$^{3}$ Has been supported by FAPESP grants 2018/10633-4 and 2017/21729-0.}

\subjclass[2010]{Primary 37B55, 37B99, 34D09, 93D09}

\keywords{nonuniform exponential dichotomy, robustness, permanence of hyperbolic equilibria}

\date{}

\dedicatory{}

\begin{abstract}
	In this paper, we study stability properties of nonuniform hyperbolicity for evolution processes associated with differential equations in Banach spaces. 
	We prove a robustness result of nonuniform hyperbolicity for linear evolution processes, that is, we show that the property of admitting a nonuniform exponential dichotomy is stable under perturbation.
	Moreover, we provide conditions to obtain uniqueness and continuous dependence of projections
	associated with nonuniform exponential dichotomies. 
	We also present an example of evolution process in a Banach space that admits nonuniform exponential dichotomy and study the permanence of the nonuniform hyperbolicity under perturbation. Finally, we prove persistence of nonuniform hyperbolic solutions for nonlinear evolution processes under perturbations.
\end{abstract}

\maketitle

\section{Introduction}
\par In the framework of dynamical systems, hyperbolicity plays a fundamental role (see, e.g. \cite{Katok,C-Robinson,Shub} and the references therein). It is the key property for most of the results on permanence under perturbations. The permanence, on the other hand, is an essential property for dynamical systems that model real life phenomena. That importance is related to the fact that modelling always comes with approximations (due do the empiric nature that it caries) and/or with simplifications (introduced to make models treatable or simply because the complete set of variables that are related to the phenomenon is not known). Therefore, in order that the mathematical model reflects, in some way, the phenomenon modelled, it is essential that its dynamical structures are robust under perturbation. It starts with the robustness under perturbation of hyperbolicity itself. Here, we are concerned with the robustness of nonautonomous nonuniform hyperbolicity.

%
%\st{In the framework of dynamical systems there is a great interest in the study of properties preserved under perturbation. % this properties are usually call by roughness or robustness. 
%The reason for such interest comes from the fact that when modeling real life problems is very difficult to be precise. 
%Thus we have to look for these characteristics in a simple model in order to obtain information for a more complicate (or applicable) model. Since it is stable under perturbation and appears and many examples, \textit{hyperbolicity} is one of the most important properties in dynamical systems, see, e.g.}  \cite{Katok,C-Robinson,Shub} \st{and the references therein.}
%

\par In the discrete case, \textit{hyperbolic dynamical systems}
$x_{n+1}=Bx_n$ 
appear when the spectrum of the
bounded linear operator $B$ does not intercept the unitary circle in the complex plane. This implies
the existence of a \textit{hyperbolic decomposition} of the space, which means 
that exist two main directions: one where the evolution of the dynamical system decays exponentially and another where it grows exponentially.
This property can be interpreted as a complete understanding of local or global dynamics. 
The set of operators that has such
decomposition is an open set in the spaces of bounded linear operators
and the operators in this set are called \textit{hyperbolic operators}. In other words, if $B$ is hyperbolic there is a neighborhood of $B$ such that every operator
in this neighborhood is hyperbolic. 
For autonomous differential equations, when $A$ is a bounded linear operator,
$\dot{x}=Ax$,
by the spectral mapping theorem \cite{Kato}, 
hyperbolicity
is associated with linear operators such that the spectrum does not intersect the imaginary line.%which means that
%hyberbolicity is a robust property. And
%And because of this fact, hyperbolicity is generalize in many situations: for smooth maps on manifolds or autonomous and non autonomous differential equations.
%This decomposition to study more general equations, 
%$\dot{x}=f(x)$, when $f$ is non linear and study for example the saddle point property
%see \cite{Hale,Henry-1}. 
%see  \textcolor{red}{CITE MORE WORKS}\cite{Henry-1,Henry-2}.
\par Generally, in nonautonomous differential equations, the notion of hyperbolicity is referred as \textit{exponential dichotomy}. More precisely,
consider the following differential equation in a Banach space $X$,
\begin{equation}\label{introdution-eq-standart-linear-equation}
\dot{x}=A(t)x, \ \ x(s)=x_s\in X.
\end{equation}
Under appropriate conditions, the solutions $x(t,s;x_s)$, $t\geq s$, of this initial value problem define an evolution process $\mathcal{S}:= \{S(t,s)\,; \, t\geq s \}$, where $S(t,s)x_s=x(t,s;x_s)$. 
We say that the evolution process  $\mathcal{S}$  admits an \textit{(uniform) exponential dichotomy} if there exists a family of projections,
$\{Q(t)\, ;\, t\in \mathbb{R}\}$ such that for each $t\geq s$ we have that
$S(t,s)Q(s)=Q(t)S(t,s)$, $S(t,s)$ is an isomorphism from $R(Q(s))$ onto $R(Q(t))$, and
\begin{eqnarray}\label{introdution-exp-dichotomy-def-eq1}
\|S(t,s)(Id_X-Q(s))\|_{\mathcal{L}(X)}&\leq &K e^{-\alpha(t-s)}, %\label{introdution-exp-dichotomy-def-eq1}
\ \ t\geq s;\\\label{introdution-exp-dichotomy-def-eq2}
\|S(t,s)Q(s)\|_{\mathcal{L}(X)}&\leq&  Ke^{\alpha(t-s)}, %\label{introdution-exp-dichotomy-def-eq2}
\ \ t< s,
\end{eqnarray}
for some constants $K\geq 1$ and $\alpha>0$.
Note that, since the 
vector field is changing in time, it is natural to think that for each initial time
we have a hyperbolic decomposition that resembles the properties in the autonomous case.
There is a long list of works through these last decades about existence of exponential dichotomies and their stability properties, for instance: \cite{Carvalho-Langa,Chow-Leiva-1,Chow-Leiva-2,Hale-Zhang,Henry-1,Henry-2,Pliss-Sell}.
\par If we replace the constant $K$ in the above definition by a continuous function $K(s)$ in \eqref{introdution-exp-dichotomy-def-eq1} and 
\eqref{introdution-exp-dichotomy-def-eq2},
we say that \eqref{introdution-eq-standart-linear-equation} admits a
\textit{nonuniform exponential dichotomy} (for an introduction see \cite{Barreira-Valls-Sta}).
Usually, the nonuniform bound is given by
$K(s)\leq De^{\nu|s|}$ for some $\nu>0$. 
If
$\alpha>\nu$, the properties of the nonuniform behavior resembles the uniform case, 
so this is the usual requirement to prove results about robustness and admissibility.
As in the uniform case,
there are many works concerning issues of existence and robustness for nonuniform exponential dichotomies
\cite{Alhalawa,Barreira-Dragicevi-Valls,BarreiraValls-Non,Barreira-Valls-Sta,Barreira-Valls-R,Barreira-Valls-Existence,Barreira-Valls-Robustness-noninvertible,Zhou-Lu-Zhang-1}. 
In this paper, we study robustness
%\textcolor{blue}{when $\alpha>\nu$} 
and permanence of global hyperbolic solutions.

\par The robustness of nonuniform exponential dichotomy for equation
\eqref{introdution-eq-standart-linear-equation} can be interpreted as follows: 
suppose that the associated solution operator (evolution process) admits a nonuniform exponential dichotomy. The problem is to know
for which family of bounded linear operators $\{B(t): \, t\in \mathbb{R}\}$,
the perturbed problem
\begin{equation}\label{introdution-eq-standart-linear-equation-perturbed}
\dot{x}=A(t)x+B(t)x, %\ \ x(s)=x_s\in X,
\end{equation}	
admits a nonuniform exponential dichotomy.

\par Barreira and Valls
\cite{Barreira-Valls-R}
studied under which conditions the
nonuniform exponential dichotomy is robust in the case of 
invertible evolution processes. 
Later, Zhou \textit{et al.} \cite{Zhou-Lu-Zhang-1},
proved a similar result for random difference equations
for linear operators without the invertibility requirement. 
%More recently, also Barreira and Valls
%\cite{Barreira-Valls-On-robustness} in 2015, prove that
%an weaker nonuniform exponential dichotomy is robust
%for general continuous evolution process.
More recently,  Barreira and Valls
\cite{Barreira-Valls-Robustness-noninvertible}, 
proved that
nonuniform exponential dichotomy is robust
for continuous evolution processes, also without invertibility.
They consider an evolution process that admits a nonuniform exponential dichotomy with 
a general growth rate $\rho(\cdot)$. They proved that if $\alpha >2\nu$ and 
$B:\mathbb{R}\to \mathcal{L}(X)$ is continuous satisfying
$\|B(t)\|_{\mathcal{L}(X)}\leq \delta e^{-3\nu |\rho(t)| }\rho^\prime(t)$, 
for all $t\in \mathbb{R}$, then the perturbed problem 
\eqref{introdution-eq-standart-linear-equation-perturbed} admits a $\rho$-nonuniform exponential dichotomy.
%In \cite{Barreira-Dragicevi-Valls} they give another proof of the robust property using admissibility. 
\par We provide a interpretation of the robustness result as \textit{open property}. In fact,
if an evolution process $\mathcal{S}$ admits a nonuniform exponential dichotomy,
there is an open neighborhood $N(\mathcal{S})$ of $\mathcal{S}$ such that every evolution process in $N(\mathcal{S})$ also admits a
nonuniform exponential dichotomy. 
Our proof of the robustness result is inspired by the ideas of Henry \cite{Henry-1}. We prove that if a continuous evolution process admits a nonuniform exponential dichotomy, then each discretization also admits it. Then we use the \textit{roughness}
of the nonuniform exponential dichotomy for discrete evolution processes, obtained by Zhou \textit{et al.} \cite{Zhou-Lu-Zhang-1}, to obtain that each discretization of the perturbed evolution process also admits a nonuniform exponential dichotomy. Thus, to obtain our robustness result, we have to guarantee that if each discretization of a continuous evolution process $\mathcal{S}$ admits a nonuniform exponential dichotomy, then $\mathcal{S}$ also admits it. 
\par By this method, we obtain uniqueness and continuous dependence of projections, and explicit expressions for the bound and exponent of the perturbed evolution process. Besides,
since we preserve the condition $\alpha>\nu$ of Zhou \textit{et al.} \cite{Zhou-Lu-Zhang-1}, we obtain an improvement of the result of Barreira and Valls, at the particular case of Theorem 1 of
\cite{Barreira-Valls-Robustness-noninvertible}, when $\rho(t)=t$. 
% and they use
%$\alpha>2\nu$. Therefore, our result cannot be applied by general growth rates
%$\rho(t)$ and we present another interpretation that allow to prove robustness with less conditions for the exponents.
Moreover, we do not assume that the evolution processes are invertible, then it is possible to apply our result on evolutionary differential equations in Banach spaces, as the ones that appears in \cite{Carvalho-Langa,Carvalho-Langa-Robison-book,Chow-Leiva-2,Henry-1}. 
\par An important consequence of the robustness result regarding nonlinear evolution processes is the persistence under perturbation of \textit{hyperbolic solutions}. More precisely,
consider a semilinear differential equation
\begin{equation}\label{introdution-eq-standart-semilinear-equation}
\dot{x}=A(t)x+f(t,x), \ \ x(s)=x_s\in X,
\end{equation}
and suppose that there for each $s\in \mathbb{R}$ and $x_s\in X$ there exist a solution 
$x(\cdot,s;x_s):[s,+\infty)\to X$, then there exist a nonlinear evolution process 
$\mathcal{S}_f=\{S_f(t,s):t\geq s\}$ defined by $S_f(t,s)x=x(t,s;x_s)$. A map $\xi:\mathbb{R}\to X$ is called a \textit{global solution} for 
$\mathcal{S}_f$ if $S_f(t,s)\xi(s)=\xi(t)$ for every $t\geq s$ and we say that $\xi$ is a \textit{nonuniform hyperbolic solution} if the linearized evolution process over $\xi$ admits a nonuniform exponential dichotomy. This notion also appears in Barreira and Valls \cite{Barreira-Valls-Sta} as \textit{nonuniformly hyperbolic solutions}. 
In the uniform case, in Carvalho and Langa 
\cite{Carvalho-Langa}, it was studied the existence of hyperbolic solutions from nonautonomous perturbation of hyperbolic equilibria for a autonomous semilinear differential equation. 
%They also studied existence and continuity of invariant manifolds for these solutions. 
\par Inspired by Carvalho \textit{et al.} \cite{Carvalho-Langa-Robison-book} we prove a result on the persistence of nonuniform hyperbolic solutions under perturbations. In fact, if $\xi$ is a nonuniform hyperbolic solution for $\mathcal{S}_f$ and $g$ is a map ``close" to $f$, then there exists a nonuniform hyperbolic solution for $\mathcal{S}_g$ ``close" to $\xi$. Additionally, we also prove that bounded nonuniform hyperbolic are \textit{isolated} in the space of bounded continuous functions $C_b(\mathbb{R},X)$, i.e., if $\xi$ is a nonuniform 
hyperbolic solution, then there exists a neighborhood of $\xi$ in $C_b(\mathbb{R})$ such that $\xi$ is the only bounded solution for $\mathcal{S}_f$ is this neighborhood. 
\par Therefore, the aim of this work is to establish: uniqueness and continuity for family of projections
associated with nonuniform exponential dichotomy; a robustness result with the condition $\alpha>\nu$; and persistence of nonuniform hyperbolic solutions. To that end, in Section \ref{section-robustness-discrete-case}, we summarize some important facts for discrete evolution processes with nonuniform exponential dichotomy. We prove uniqueness and continuity of projections and briefly recall the robustness result of Zhou \textit{et al.} \cite{Zhou-Lu-Zhang-1} in our framework.
Then, in Section \ref{section-robustness-continuous-case}, we prove 
uniqueness and continuous dependence for family of projections, and a robustness result of nonuniform exponential dichotomy for continuous evolution processes.
%
%results for continuous evolution processes via
%%, which totally differs from the arguments in \cite{Barreira-Valls-Robustness-noninvertible}. 
%This is done by establishing comparison results between 
%discrete and continuous evolution processes that admit nonuniform exponential dichotomy. This method allow us to prove a robustness result that preserve the condition $\alpha >\nu$ of Zhou \textit{et al.}
%\cite{Zhou-Lu-Zhang-1}, and also to easily prove 
In Section \ref{subsection-a-general-example}, we present a class of examples of evolutions processes in a Banach space that admit nonuniform exponential dichotomy.
Finally, in Section \ref{section-persistence}, 
we consider
\textit{nonuniform hyperbolic solutions} for evolution processes associated with semilinear differential equations. We prove that these solutions are isolated in $C_b(\mathbb{R},X)$ and that they persist under perturbations.

\section{Nonuniform exponential dichotomy: discrete case}\label{section-robustness-discrete-case}

\par In this section, we present some basic facts of nonuniform exponential dichotomy for discrete evolution processes. We briefly recall some results of Zhou \textit{et al.} \cite{Zhou-Lu-Zhang-1} (without considering a random parameter) that we will use to prove our results for the continuous case. Their most important result is a robustness theorem for the nonuniform exponential dichotomy. Moreover, as a consequence of the results in \cite{Zhou-Lu-Zhang-1} we obtain uniqueness and continuous dependence of projections associated with the nonuniform exponential dichotomy.
%
%\begin{definition}
%	A discrete function $K: \mathbb{Z}\rightarrow [1,+\infty)$ is said to be \textbf{tempered} if 
%	\begin{equation*}
%	\lim_{n\to \pm \infty} \frac{ln(K(n))}{n}=0.
%	\end{equation*}
%\end{definition}
%
%\begin{example}
%	$K(n)=1+ |n|$; $K(n)=K\geq 1, constant$.
%\end{example}
\par We start with the definition of a \textit{discrete evolution process} in a Banach space 
$(X,\|\cdot\|_X)$ in a particular case where the family of operators are linear bounded operators in $X$.
\begin{definition}
	Let 
	$\mathcal{S}=\{S_{n,m}\, : \, n\geq m \hbox{ with }
	n,m\in \mathbb{Z}\}$ 
	be a family
	of bounded linear operators in a Banach space $X$. 
	We say that
	$\mathcal{S}$ is a \textbf{discrete evolution process} if
	\begin{enumerate}
		\item 
		$S_{n,n}=Id_X$, for all $n\in \mathbb{Z}$;
		\item 
		$S_{n,m}S_{m,k}=S_{n,k}$, for all $n\geq m\geq k$.
	\end{enumerate}
	To simplify the notation, we only write $\mathcal{S}=\{S_{n,m}\, :  n\geq m \}$ as an evolution process, whenever is clear are dealing with discrete ones.
\end{definition}

\begin{remark}
	It is always possible to associate a discrete evolution process
	$\mathcal{S}=\{S_{n,m}\, : \, n\geq m \}$ 
	with the family index with one parameter, i.e.,
	$S_n=S_{n+1,n}$ for all $n\in \mathbb{Z}$.
	Conversely, if we have a family 
	$\{S_n\, :\, n\in \mathbb{Z}\}\subset \mathcal{L}(X)$ 
	we define
	$S_{n,m}:=S_{n-1}\cdots S_m$ for $n>m$ and
	$S_{n,n}:=Id_X$
	and obtain a linear evolution process $\mathcal{S}=\{S_{n,m}\, : \, n\geq m \}$.
	Therefore, when we refer to a discrete evolution process
	$\mathcal{S}$ we can use both notations.
\end{remark}

\par The definition above is natural in the study of \textit{difference equations}.
Given a discrete linear evolution process $\mathcal{S}=\{S_n:n\in \mathbb{Z}\}$ it is possible 
to consider the following difference equation 
\begin{equation}\label{discrete-equation-associated-with-the-process}
x_{n+1}=S_n x_n, \ \ x_n\in X, \ \ n\in \mathbb{Z}.
\end{equation}

Now, we present the definition of \textit{nonuniform exponential dichotomy}.

\begin{definition}
	Let 
	$\mathcal{S}=\{S_{n,m} \, : \, n\geq m\}\subset \mathcal{L}(X)$
	be an evolution process in a Banach space $X$.
	We say that $S$ admits a \textbf{nonuniform exponential dichotomy} if there is a family of continuous projections
	$\{Q_n; \, n\in \mathbb{Z}\}$ in $\mathcal{L}(X)$ such that
	\begin{enumerate}
		\item $Q_nS_{n,m}=S_{n,m}Q_m$, for $n\geq m$;
		\item $S_{n,m}:R(Q_m ) \to R(Q_n )$ is an isomorphism, 
		for $n\geq m$, and
		we define $S_{m,n}$ as its inverse;
		\item There exists a function $K:\mathbb{Z}\rightarrow [1,+\infty)$ with
		$K(n)\leq De^{\nu |n|}$, for some $D\geq 1$ and $\nu>0$,
		and $\alpha>0$ such that
		\begin{equation*}
		\|S_{n,m}(Id_X-Q_m )\|_{\mathcal{L}(X)}\leq K(m) e^{-\alpha(n-m)}, \ \ 
		\forall n\geq m;
		\end{equation*}
		and
		\begin{equation*}
		\|S_{n,m}Q_m\|_{\mathcal{L}(X)}\leq K(m) e^{\alpha(n-m)}, \ \ 
		\forall n\leq m.
		\end{equation*}
	\end{enumerate}
\end{definition}

\par In this theory, $K$ and $\alpha$ are usually called the \textbf{bound} and the \textbf{exponent} of the exponential dichotomy, respectively.
%\par Associated with equation 
%\eqref{discrete-equation-associated-with-the-process-perturbed}
%we have a \textbf{discrete variation of constants formula}:
%\begin{lemma}
%	If $\{S_n\}_{n\in \mathbb{Z}}$ is a sequence in $\mathcal{L}(X)$, and 
%	$\{x_n\}_{n\in \mathbb{Z}}$ is a solution to
%	\eqref{discrete-equation-associated-with-the-process-perturbed}, then this solution satisfies
%	\begin{equation}
%	x_n=S_{n,m}x_m +\sum_{k=m}^{n-1}S_{n,k+1}f_k, \hbox{ for all } n\geq m.
%	\end{equation}
%\end{lemma}
%\begin{proof}
%	It follows by induction.
%\end{proof}

%\par We will present a class of perturbations for this equation so that 
%it will be possible to guarantee  
%existence of solutions for the perturbed problem. 
%This study is usually called
%\textit{admissibility}.

%\begin{definition}
%	Given two Banach spaces $\mathfrak{X}$ and $\mathfrak{Y}$, 
%	we say that the pair of spaces
%	$(\mathfrak{Y},\mathfrak{X})$ is \textbf{admissible} for equation
%	\eqref{discrete-equation-associated-with-the-process} if for
%	$\{f_n\}_{n\in \mathbb{Z}}\in \mathfrak{Y}$ 
%	there is a solution $\{x_n\}_{n\in \mathbb{Z}}\in \mathfrak{X}$ for the equation
%	\begin{equation}\label{discrete-equation-associated-with-the-process-perturbed}
%	x_{n+1}=S_n x_n+f_n.
%	\end{equation}
%\end{definition}
\par We now recall the definition of a \textit{Green function}.
\begin{definition}
	Let $\mathcal{S}=\{S_n:n\in \mathbb{Z}\}$ be a discrete evolution process such that admits 
	a nonuniform exponential dichotomy with family of projections 
	$\{Q_n\}_{n\in \mathbb{Z}}$. 
	The \textbf{Green function} associated to the evolution process $\mathcal{S}$ is given by
	\begin{equation*}
	G_{n,m}= \left\{ 
	\begin{array}{l l} 
	S_{n,m}(Id_X-Q_m), 
	&  \quad \hbox{if } n\geq m,
	\\ -S_{n,m}Q_m, \, 
	& \quad \hbox{if } n<m.
	\end{array} 
	\right.
	\end{equation*} 
	
\end{definition}

\par A space that appears naturally when dealing with
nonuniform exponential dichotomies is
\begin{equation*}
l_{1/K}^\infty(\mathbb{Z}):=\big\{f:\mathbb{Z}\to X: \,
\sup_{n\in \mathbb{Z}} \big\{\|f_n\|_X K(n+1)\big\}=M_f <+\infty \big\},
\end{equation*} 
where $K:\mathbb{Z}\to \mathbb{R}$ is such that
$K(n)\geq 1$ for all $n\in \mathbb{Z}$.

\par As in the uniform case, the next result shows that it is possible to obtain the solution for
\begin{equation}\label{discrete-equation-associated-with-the-process-perturbed}
x_{n+1}=S_n x_n+f_n.
\end{equation} 
using the \textit{Green function}.

\begin{theorem}\label{th-admissibility-pair-discrete-case}
	Assume that
	the evolution process $\mathcal{S}=\{S_n:n\in \mathbb{Z}\}$
	admits a nonuniform exponential dichotomy
	with bound $K(n)\leq De^{\nu|n|}$ and exponent $\alpha>\nu$. If
	$f\in l_{1/K}^\infty(\mathbb{Z})$,
	then
	\eqref{discrete-equation-associated-with-the-process-perturbed} 
	possesses a bounded solution given by
	\begin{equation*}
	x_n=\sum_{-\infty}^{+\infty}G_{n,k+1}f_k, \ \ \forall \  n\in \mathbb{Z}.
	\end{equation*}
\end{theorem}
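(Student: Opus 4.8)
The plan is to verify directly that the series $x_n = \sum_{k=-\infty}^{+\infty} G_{n,k+1} f_k$ converges absolutely, defines a bounded sequence, and satisfies the recursion $x_{n+1} = S_n x_n + f_n$. First I would split the sum at $k = n-1$ using the definition of the Green function: for $k \leq n-1$ (i.e.\ $k+1 \leq n$) the term is $S_{n,k+1}(Id_X - Q_{k+1})f_k$, and for $k \geq n$ (i.e.\ $k+1 > n$) the term is $-S_{n,k+1}Q_{k+1}f_k$. Then I would estimate each piece using the dichotomy bounds from item (3) of the definition together with the hypothesis $f \in l^\infty_{1/K}(\mathbb{Z})$, which gives $\|f_k\|_X \leq M_f / K(k+1)$.

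For the stable part, $\|S_{n,k+1}(Id_X - Q_{k+1})f_k\|_X \leq K(k+1) e^{-\alpha(n-k-1)} \|f_k\|_X \leq M_f\, e^{-\alpha(n-k-1)}$, so $\sum_{k \leq n-1}$ is dominated by $M_f \sum_{j \geq 0} e^{-\alpha j} = M_f/(1 - e^{-\alpha})$. For the unstable part, $\|S_{n,k+1}Q_{k+1}f_k\|_X \leq K(k+1) e^{\alpha(n-k-1)} \|f_k\|_X \leq M_f\, e^{\alpha(n-k-1)}$, and since here $k+1 > n$, the exponent $\alpha(n-k-1)$ is negative, so $\sum_{k \geq n}$ is again dominated by a geometric series $M_f \sum_{j \geq 1} e^{-\alpha j}$. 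Hence the series converges absolutely and uniformly in $n$, and $\sup_n \|x_n\|_X \leq 2M_f/(1 - e^{-\alpha}) < +\infty$, giving boundedness. (Note that here the condition $\alpha > \nu$ is not actually needed for the convergence, since the $K(k+1)$ weight cancels against the weight in $l^\infty_{1/K}$; it would enter only if one measured $f$ differently or wanted estimates in a weighted norm.)

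It remains to check that $x_n$ solves the difference equation. I would compute $S_n x_n + f_n = \sum_k S_n G_{n,k+1} f_k + f_n$ and compare with $x_{n+1} = \sum_k G_{n+1,k+1} f_k$, treating the terms in the ranges $k+1 \leq n$, $k+1 = n+1$, and $k+1 > n+1$ separately. Using $S_n = S_{n+1,n}$, the cocycle property $S_{n+1,n} S_{n,k+1} = S_{n+1,k+1}$, the intertwining relation $Q_{n+1}S_{n+1,k+1} = S_{n+1,k+1}Q_{k+1}$ (and its version on the ranges of the projections, where $S_{n+1,n}$ is invertible), one gets $S_n G_{n,k+1} = G_{n+1,k+1}$ for every $k \neq n$, while the single term $k = n$ contributes $S_n G_{n,n+1} = -S_n S_{n,n+1}Q_{n+1} = -Q_{n+1}$ on the left side versus $G_{n+1,n+1} = Id_X - Q_{n+1}$ on the right side, and the discrepancy is exactly $f_n$ (since $-Q_{n+1}f_n + f_n = (Id_X - Q_{n+1})f_n$). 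Collecting terms yields $S_n x_n + f_n = x_{n+1}$, as required. The main obstacle is bookkeeping: one must be careful that $S_{n,m}$ for $m > n$ denotes the inverse of the isomorphism on the ranges of the projections, so every manipulation of a term with $k+1 > n$ lives on $R(Q_{k+1})$ and the cocycle/intertwining identities must be invoked in that restricted sense; once the ranges are tracked correctly the algebra is routine.
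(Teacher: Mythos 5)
Your argument is correct and complete for the statement as written. The paper itself gives no proof of this theorem: it simply cites Zhou \emph{et al.} and remarks that their argument (for tempered dichotomies) goes through under $\alpha>\nu$. What you supply is the standard direct verification — absolute convergence of the Green-function series via the exact cancellation of the weight $K(k+1)$ from the dichotomy bound against the weight in $l^\infty_{1/K}(\mathbb{Z})$, followed by the telescoping identity $S_nG_{n,k+1}=G_{n+1,k+1}$ for $k\neq n$ and the computation $S_nG_{n,n+1}f_n+f_n=(Id_X-Q_{n+1})f_n=G_{n+1,n+1}f_n$ at the exceptional index — and your handling of the inverses on $R(Q_{k+1})$ in the range $k>n$ is the right way to make the cocycle identities legitimate there. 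Your side observation that $\alpha>\nu$ is not used is accurate for the literal statement (existence of the bounded solution given by the formula); note, however, that the paper immediately applies the theorem as if it also asserted \emph{uniqueness} of the bounded solution (e.g.\ in Corollary \ref{cor-uniqueness-projection-discrete} and Theorem \ref{th-continuity-projection}), and it is precisely in the uniqueness argument — showing that a bounded solution of the homogeneous equation vanishes by letting $n\to\pm\infty$ against the factor $De^{\nu|n|}e^{-\alpha|n-m|}$ — that the hypothesis $\alpha>\nu$ becomes essential. If you intend your proof to support those later applications, you should append that uniqueness step.
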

\par For the proof of Theorem \eqref{th-admissibility-pair-discrete-case} see Zhou \textit{et al.} \cite{Zhou-Lu-Zhang-1}. Note that, in \cite{Zhou-Lu-Zhang-1} they consider tempered exponential dichotomies, but their proof hold true with the condition $\alpha>\nu$. 

\par As a consequence of Theorem \ref{th-admissibility-pair-discrete-case}, we obtain uniqueness of the family of projections associated with the nonuniform exponential dichotomy.
\begin{corollary}[Uniqueness of projections]
	\label{cor-uniqueness-projection-discrete}
	If $\mathcal{S}=\{S_n:n\in \mathbb{Z}\}$ admits a nonuniform exponential dichotomy with bound $K(n)\leq De^{\nu|n|}$ and exponent $\alpha>\nu$, then
	the family of projections are uniquely determined.
\end{corollary}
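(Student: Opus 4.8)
The plan is to suppose, towards uniqueness, that $\mathcal{S}=\{S_n:n\in\mathbb{Z}\}$ admits two nonuniform exponential dichotomies, with families of projections $\{Q_n\}_{n\in\mathbb{Z}}$ and $\{\tilde Q_n\}_{n\in\mathbb{Z}}$, associated Green functions $G$ and $\tilde G$, and parameters $(D_1,\nu_1,\alpha_1)$ and $(D_2,\nu_2,\alpha_2)$ with $\alpha_i>\nu_i$; the goal is to show $Q_n=\tilde Q_n$ for every $n$. The main tool is Theorem \ref{th-admissibility-pair-discrete-case}: for any finitely supported sequence $f:\mathbb{Z}\to X$ (which plainly belongs to both $l^\infty_{1/K_1}(\mathbb{Z})$ and $l^\infty_{1/K_2}(\mathbb{Z})$), the sequences $x_n=\sum_{k}G_{n,k+1}f_k$ and $\tilde x_n=\sum_{k}\tilde G_{n,k+1}f_k$ are both solutions of \eqref{discrete-equation-associated-with-the-process-perturbed}, so their difference $\xi_n:=x_n-\tilde x_n$ is a global solution of the homogeneous equation \eqref{discrete-equation-associated-with-the-process}.

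Next I would record two facts, each a direct computation from the dichotomy estimates. First, if $\mathrm{supp}(f)\subseteq[-N,N]$, then for $|n|$ large the bounds on $G_{n,k+1}$ (which equals $S_{n,k+1}(Id_X-Q_{k+1})$ when $n\ge k+1$ and $-S_{n,k+1}Q_{k+1}$ when $n<k+1$) yield $\|x_n\|_X\le C_1 e^{-\alpha_1|n|}$, and likewise $\|\tilde x_n\|_X\le C_2 e^{-\alpha_2|n|}$; consequently $\|\xi_n\|_X\le C e^{-\beta|n|}$ for all $n$, where $\beta:=\min\{\alpha_1,\alpha_2\}>0$. Second — and this is the step where the condition $\alpha_1>\nu_1$ is essential — any global solution $\eta$ of \eqref{discrete-equation-associated-with-the-process} with $\|\eta_n\|_X\le Ce^{-\beta|n|}$ for some $\beta>0$ must vanish identically: using $(Id_X-Q_n)\eta_n=S_{n,m}(Id_X-Q_m)\eta_m$, letting $m\to-\infty$ and applying the first dichotomy bound together with $K_1(m)\le D_1e^{\nu_1|m|}$ gives $(Id_X-Q_n)\eta_n=0$ (the governing exponent is $\alpha_1+\beta-\nu_1>0$), hence $\eta_n=Q_n\eta_n$; then $\eta_n=S_{n,m}Q_m\eta_m$, letting $m\to+\infty$ and applying the second bound gives $\eta_n=0$.

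Combining the two facts, $\xi\equiv 0$, that is $x_n=\tilde x_n$ for every $n$ and every finitely supported $f$. Choosing $f_k=v\,\delta_{k,j}$ with $v\in X$ arbitrary then yields $G_{n,j+1}v=\tilde G_{n,j+1}v$ for all $n,j\in\mathbb{Z}$, hence $G_{n,m}=\tilde G_{n,m}$ for all $n,m\in\mathbb{Z}$. Taking $m=n$ gives $Id_X-Q_n=G_{n,n}=\tilde G_{n,n}=Id_X-\tilde Q_n$, so $Q_n=\tilde Q_n$ for every $n$, as claimed. (If one prefers to restrict to dichotomies with the same parameters, the argument is verbatim the same.)

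I expect the routine-but-delicate point to be the second fact above: one must check that the nonuniform factor $K(m)\le De^{\nu|m|}$, which grows as $|m|\to\infty$, is beaten by the product of the dichotomy's exponential decay and the a priori exponential decay of $\eta$ — this is precisely what forces $\alpha>\nu$ and is why the statement can fail in its absence. A secondary bookkeeping point is verifying that the Green-function solutions decay exponentially both as $n\to+\infty$ and as $n\to-\infty$, so that their difference lies in the class to which the vanishing lemma applies; for finitely supported $f$ this is immediate from the two branches of the definition of $G_{n,m}$.
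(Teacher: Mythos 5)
Your proof is correct, and its skeleton is the same as the paper's: feed a delta-type (finitely supported) forcing into the Green-function representation from Theorem \ref{th-admissibility-pair-discrete-case} and read off the projection from $G_{m,m}=Id_X-Q_m$. The one genuine difference is how uniqueness of the representation is justified. The paper simply asserts that Theorem \ref{th-admissibility-pair-discrete-case} gives a \emph{unique} bounded solution (a claim not literally present in that theorem's statement as written, though it is part of the result in Zhou \emph{et al.}), so that the two Green-function formulas must produce the same sequence. You instead prove the needed uniqueness directly: the difference of the two candidate solutions is a global solution of the homogeneous equation decaying like $e^{-\beta|n|}$, and your vanishing lemma --- split off the $(Id_X-Q_n)$-component by letting $m\to-\infty$ and the $Q_n$-component by letting $m\to+\infty$, with the nonuniform factor $e^{\nu|m|}$ absorbed because $\alpha+\beta-\nu>0$ --- forces it to vanish. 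This makes the argument self-contained, correctly isolates where $\alpha>\nu$ enters, and incidentally covers the case where the two dichotomies come with different bounds and exponents, which the paper's proof (written with a single $K$) does not address. The computations in both steps check out.
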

\begin{proof}
	Let $\{Q_n^{(i)}\, ; \, n\in \mathbb{Z}\}$, for $i=1,2$, 
	projections associated with the evolution process $\mathcal{S}$.
	Given $x\in X$ and $m\in \mathbb{Z}$ fixed, define $f_n=0$, for all $n\neq m-1$, and $f_{m-1}=K(m)^{-1}x$.
	Thus, $f\in l_{1/K}^\infty(\mathbb{Z})$ and from Theorem \ref{th-admissibility-pair-discrete-case} there exists a unique solution $\{x_n\}_{n\in \mathbb{Z}}$ for 
	\begin{equation*}
	x_{n+1}=S_n x_n +f_n, \ \ n\in \mathbb{Z}.
	\end{equation*}
	Hence,
	$x_m=\sum_{-\infty}^{+\infty}G_{m,k+1}^{(i)}f_k=G_{m,m}^{(i)}f_{m-1}=K(m)^{-1}(Id_X-Q_m^{(i)})x$,
	for $i=1,2$.
	Therefore, $Q_m^{(1)}=Q_m^{(2)}$.
\end{proof}

\par Next, we establish a result on continuous dependence of projections.
\begin{theorem}[Continuous dependence of projections]
	\label{th-continuity-projection}
	Suppose that $\{T_n\}_{n\in {\mathbb Z}}$ and
	$\{S_n\}_{n\in {\mathbb Z}}$
	admits a nonuniform exponential dichotomy with projections 
	$\{Q_n^{\mathcal{T}}\}_{n\in {\mathbb Z}}$ and 
	$\{Q_n^{\mathcal{S}}\}_{n\in {\mathbb Z}}$,
	exponents $\alpha_\mathcal{T}$
	and 
	$\alpha_\mathcal{S}$, respectively,
	and the same bound
	$K(n)\leq De^{\nu|n|}$.
	If $\nu<\min\{\alpha_\mathcal{T},\alpha_\mathcal{S}\}$ and 
	\begin{equation*}
	\sup_{n\in \mathbb{Z}} \big\{K(n+1)\|T_n-S_n\|_{{\mathcal L}(X)}\big\}\leq
	\epsilon,
	\end{equation*}
	then
	$$
	\sup_{n\in \mathbb{Z}} \big\{K(n)^{-1} \, \|Q_n^{\mathcal{T}}-Q_n^{\mathcal{S}}\|_{{\mathcal L}(X)}\big\}
	\leq\frac{e^{-\alpha_\mathcal{S}} + e^{-\alpha_\mathcal{T}}}{1-e^{-(\alpha_\mathcal{S}+\alpha_\mathcal{T})}}\, \epsilon .
	$$
\end{theorem}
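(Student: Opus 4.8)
The plan is to exploit the admissibility result of Theorem~\ref{th-admissibility-pair-discrete-case} exactly as in the proof of Corollary~\ref{cor-uniqueness-projection-discrete}, but now comparing the two bounded solutions produced by the two evolution processes when fed the \emph{same} forcing term. Fix $x\in X$ with $\|x\|_X\le 1$ and $m\in\mathbb{Z}$, and define $f\in l^\infty_{1/K}(\mathbb{Z})$ by $f_n=0$ for $n\neq m-1$ and $f_{m-1}=K(m)^{-1}x$. Let $\{x_n^{\mathcal{T}}\}$ and $\{x_n^{\mathcal{S}}\}$ be the bounded solutions of $x_{n+1}=T_nx_n+f_n$ and $x_{n+1}=S_nx_n+f_n$ respectively, given by the corresponding Green functions $G^{\mathcal{T}}$ and $G^{\mathcal{S}}$. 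As in Corollary~\ref{cor-uniqueness-projection-discrete}, evaluating at $n=m$ gives $x_m^{\mathcal{T}}=K(m)^{-1}(Id_X-Q_m^{\mathcal{T}})x$ and $x_m^{\mathcal{S}}=K(m)^{-1}(Id_X-Q_m^{\mathcal{S}})x$, so that
\[
K(m)^{-1}\big(Q_m^{\mathcal{S}}-Q_m^{\mathcal{T}}\big)x \;=\; x_m^{\mathcal{T}}-x_m^{\mathcal{S}}.
\]
Thus it suffices to estimate $\|x_m^{\mathcal{T}}-x_m^{\mathcal{S}}\|_X$ uniformly in $m$ and in $\|x\|_X\le 1$.

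Next I would derive a difference equation for $z_n:=x_n^{\mathcal{T}}-x_n^{\mathcal{S}}$. Subtracting the two recursions yields
\[
z_{n+1}=S_n z_n + (T_n-S_n)x_n^{\mathcal{T}},
\]
i.e.\ $\{z_n\}$ is the bounded solution of a perturbed equation driven by $\mathcal{S}$ with forcing term $h_n:=(T_n-S_n)x_n^{\mathcal{T}}$. To apply Theorem~\ref{th-admissibility-pair-discrete-case} to this equation I must check $h\in l^\infty_{1/K}(\mathbb{Z})$: using the dichotomy bound for $\mathcal{T}$ one estimates $\|x_n^{\mathcal{T}}\|_X$ via the Green function $G^{\mathcal{T}}$ with the single nonzero forcing entry, getting $\|x_n^{\mathcal{T}}\|_X\le K(m)^{-1}\|x\|_X\cdot\|G^{\mathcal{T}}_{n,m}\|$; combined with $K(n+1)\|T_n-S_n\|\le\epsilon$ this gives $\sup_n K(n+1)\|h_n\|_X<\infty$. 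Then Theorem~\ref{th-admissibility-pair-discrete-case} represents $z_n=\sum_{k}G^{\mathcal{S}}_{n,k+1}h_k$, and in particular
\[
z_m \;=\; \sum_{k\in\mathbb{Z}} G^{\mathcal{S}}_{m,k+1}\,(T_k-S_k)\,x_k^{\mathcal{T}}.
\]

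The core estimate is then a geometric-series bound on $\|z_m\|_X$. Writing $\|x_k^{\mathcal{T}}\|_X\le K(m)^{-1}\|G^{\mathcal{T}}_{k,m}\|$ and inserting the dichotomy bounds $\|G^{\mathcal{S}}_{m,k+1}\|\le K(k+1)e^{-\alpha_\mathcal{S}|m-(k+1)|}$-type estimates (with the appropriate sign split between $k+1\le m$ and $k+1>m$) and similarly for $G^{\mathcal{T}}_{k,m}$, together with $K(k+1)\|T_k-S_k\|\le\epsilon$, the $K$-factors telescope against the $K(m)^{-1}$ and the $l^\infty_{1/K}$ weight, leaving a sum of the form $\epsilon\sum_{k}e^{-\alpha_\mathcal{S}|m-k-1|}e^{-\alpha_\mathcal{T}|k-m|}$. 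Splitting this sum at $k=m-1$ and $k=m$ and summing the two geometric tails produces exactly the constant $\dfrac{e^{-\alpha_\mathcal{S}}+e^{-\alpha_\mathcal{T}}}{1-e^{-(\alpha_\mathcal{S}+\alpha_\mathcal{T})}}$. Taking the supremum over $\|x\|_X\le1$ and over $m\in\mathbb{Z}$ gives the claimed bound on $\sup_n K(n)^{-1}\|Q_n^{\mathcal{T}}-Q_n^{\mathcal{S}}\|_{\mathcal{L}(X)}$.

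The main obstacle I anticipate is the careful bookkeeping of the nonuniform weights: one must verify that every $K(\cdot)$ factor introduced by the dichotomy bounds for $\mathcal{T}$ and $\mathcal{S}$ is absorbed either by the weight in the definition of $l^\infty_{1/K}(\mathbb{Z})$, by the normalization $f_{m-1}=K(m)^{-1}x$, or by the hypothesis $\sup_n K(n+1)\|T_n-S_n\|\le\epsilon$, so that the final constant is genuinely $K$-free and depends only on $\alpha_\mathcal{S},\alpha_\mathcal{T}$. The condition $\nu<\min\{\alpha_\mathcal{S},\alpha_\mathcal{T}\}$ enters precisely to make both applications of Theorem~\ref{th-admissibility-pair-discrete-case} legitimate and to guarantee the geometric series converge; the sign-splitting of the Green-function exponents at the index $k+1=m$ versus $k+1>m$ is the point where the two separate decay rates $\alpha_\mathcal{S}$ and $\alpha_\mathcal{T}$ show up additively in the denominator.
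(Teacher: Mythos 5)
Your proposal is correct and follows essentially the same route as the paper's own proof: the same single-impulse forcing $f_{m-1}=K(m)^{-1}x$, the same difference equation $z_{n+1}=S_nz_n+(T_n-S_n)x_n^{\mathcal{T}}$ solved via the admissibility theorem, the same representation $z_m=\sum_k G^{\mathcal{S}}_{m,k+1}(T_k-S_k)G^{\mathcal{T}}_{k,m}xK(m)^{-1}$, and the same geometric-series evaluation yielding $\bigl(e^{-\alpha_\mathcal{S}}+e^{-\alpha_\mathcal{T}}\bigr)/\bigl(1-e^{-(\alpha_\mathcal{S}+\alpha_\mathcal{T})}\bigr)$. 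Your bookkeeping of the $K$-weights is in fact slightly more explicit than the paper's.
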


\begin{proof}
	Let $z\in X$ and $m\in \mathbb{Z}$ be fixed and consider
	\begin{equation*}
	f_n= \left\{ 
	\begin{array}{l l} 
	0,
	& \quad \hbox{if } n\neq m-1,
	\\ K(m)^{-1}z, \, 
	& \quad \hbox{if } n=m-1.
	\end{array} 
	\right.
	\end{equation*} 
	Thus, by Theorem \ref{th-admissibility-pair-discrete-case},
	there exist bounded solutions $x^k=\{x_n^k\}_{n\in\mathbb{Z}}$ given by
	$x_n^{k}:=G_{n,m}^{k} zK(m)^{-1}$ for $k=\mathcal{T}, \mathcal{S}$. 
	Note that, for $n\in \mathbb{Z}$,
	\begin{equation*}
	x_{n+1}^{\mathcal{T}} - S_n x_n^{\mathcal{T}}=
	T_n x_n^{\mathcal{T}}-S_n x_n^{\mathcal{T}}+f_n  
	\end{equation*}
	and
	%\begin{equation*}
	$	x_{n+1}^{\mathcal{S}} - S_n x_n^{\mathcal{S}}=f_n$.
	%\end{equation*}
	Then, if
	$z_n:=x_n^{\mathcal{T}}-x_n^{\mathcal{S}}$ we obtain that  
	$z_{n+1}=S_n z_n + y_n$,
	where
	$y_n=(T_n-S_n)x_n^{\mathcal{T}}$ for all $n\in \mathbb{Z}$. 
	Thanks to the boundedness of the sequence $\{x_n^{\mathcal{T}}\}_{n\in\mathbb{Z}}$ 
	and by the hypothesis on $T_n-S_n$ we have that 
	$\{y_n K(n+1)\}_{n\in {\mathbb Z}}$ is bounded, and by 
	Theorem \ref{th-admissibility-pair-discrete-case} 
	we have that
	\begin{equation*}
	z_n=\sum_{k=-\infty}^\infty G_{n,k+1}^{\mathcal{S}}
	(T_k-S_k)G_{k,m}^{\mathcal{T}} zK(m)^{-1},
	\end{equation*}
	and therefore, by the hypothesis on 
	$\mathcal{T}-\mathcal{S}$,
	we deduce
	\begin{eqnarray*}
		\|z_m\|_X&\leq&\sum_{k=-\infty}^\infty K(k+1) e^{-\alpha_\mathcal{S}|m-k-1|}
		\|T_k-S_k\|_{\mathcal{L}(X)}e^ {-\alpha_\mathcal{S}|k-m|}\|z\|_{X} \\
		&\leq& \frac{e^{-\alpha_\mathcal{S}} + e^{-\alpha_\mathcal{T}}}{1-e^{-(\alpha_\mathcal{S}+\alpha_\mathcal{T})}}\, \epsilon \,
		\|z\|_X.
	\end{eqnarray*}
	The definition of $z$ in $m$ yields
	$$
	z_m=x_m^{\mathcal{T}}-x_m^{\mathcal{S}}=(G_{m,m}^{\mathcal{T}}-G_{m,m}^{\mathcal{S}})K(m)^{-1}z=(Q_m^{\mathcal{S}}-Q_m^{\mathcal{T}})K(m)^{-1}z.
	$$
	Consequently,
	\begin{equation*}
	\|(Q_m^{\mathcal{S}}-Q_m^{\mathcal{T}})K(m)^{-1}z\|_X\leq \frac{e^{-\alpha_\mathcal{S}} + e^{-\alpha_\mathcal{T}}}{1-e^{-(\alpha_\mathcal{S}+\alpha_\mathcal{T})}}\, \epsilon \,
	\|z\|_X,
	\end{equation*}
	which concludes the proof of the theorem.
\end{proof}

\par Finally, we state a robustness result for discrete evolution processes with nonuniform exponential dichotomies.
\begin{theorem}[Robustness for discrete evolution processes]
	\label{th-roughness-discrete-TED}
	Let $\mathcal{S}=\{S_n:n\in \mathbb{Z}\}$, $\mathcal{B}=\{B_n:n\in \mathbb{Z}\} \subset \mathcal{L}(X)$ be discrete evolution processes. 
	Assume that $\mathcal{S}$ admits
	a nonuniform exponential dichotomy with bound $K(n)\leq De^{\nu|n|}$ and exponent $\alpha>\nu$, and that $\mathcal{B}$ satisfies
	\begin{equation*}
	\|B_k\|_{\mathcal{L}(X)} \leq \delta K(k+1)^{-1},  \ \forall k\,\in \mathbb{Z},
	\end{equation*}
	where $\delta>0$ is such that $\delta<(1-e^{-\alpha})/(1-e^{-\alpha})$.
	Then the perturbed evolution process 
	$\mathcal{T}=\mathcal{S}+\mathcal{B}$
	%	$T_k=S_k+B_k$ for all $k\in \mathbb{Z}$, 
	admits a nonuniform exponential dichotomy with exponent
	\begin{equation*}
	\tilde{\alpha}=-\ln(\cosh \alpha - [\cosh ^2 \alpha-1-2\delta\sinh \alpha]^{1/2}),
	\end{equation*}
	and  bound
	\begin{equation*}
	\tilde{K}(n)=K(n)\left[1+\frac{\delta}{(1-\rho)(1-e^{-\alpha})}\right]\max{[D_1,D_2]},
	\end{equation*}
	where $\rho:=\delta(1+e^{-\alpha})/(1-e^{-\alpha})$,
	$D_1:=[1-\delta e^{-\alpha}/(1-e^{-\alpha-\tilde{\alpha}})]^{-1}$,
	$D_2:=[1-\delta e^{-\tilde{\beta}}/(1-e^{-\alpha-\tilde{\beta}})]^{-1}$
	and
	$\tilde{\beta}:=\tilde{\alpha}+\ln(1+2\delta\sinh\alpha)$.
\end{theorem}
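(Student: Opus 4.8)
The plan is to run the classical Lyapunov--Perron scheme for roughness of exponential dichotomies, adapted to the discrete nonuniform setting, while exploiting the key structural feature of the hypothesis: since $\|B_k\|_{\mathcal L(X)}\le \delta K(k+1)^{-1}$, every weight $K(k+1)$ that the dichotomy bounds of $\mathcal S$ attach to the Green function $G_{n,k+1}$ is cancelled by the factor $K(k+1)^{-1}$ coming from $B_k$. After this cancellation the constant bookkeeping is literally the one of the \emph{uniform} discrete roughness theorem, which is where the explicit exponent $\tilde\alpha$ and the prefactors $D_1,D_2,\tilde\beta$ come from; the residual nonuniform weight $K(n)$ is simply inherited and reappears as the $K(n)$ factor in $\tilde K(n)$.

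First I would fix $m\in\mathbb Z$ and, for $\xi\in R(Id_X-Q_m)$, seek a sequence $\{x_n\}_{n\ge m}$ solving $x_{n+1}=T_nx_n=S_nx_n+B_nx_n$ together with the implicit relation
\[
x_n=S_{n,m}(Id_X-Q_m)\xi+\sum_{k=m}^{\infty}G_{n,k+1}B_kx_k,
\]
(which splits into the forward stable sum $\sum_{k=m}^{n-1}S_{n,k+1}(Id_X-Q_{k+1})B_kx_k$ and the backward unstable sum $-\sum_{k=n}^{\infty}S_{n,k+1}Q_{k+1}B_kx_k$), posed on the weighted space $\{x:\sup_{n\ge m}e^{\gamma(n-m)}\|x_n\|_X<\infty\}$ for a suitable $\gamma<\alpha$. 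A symmetric construction on $\{x_n\}_{n\le m}$ with datum in $R(Q_m)$ gives the unstable branch. Inserting the dichotomy bounds of $\mathcal S$ and $\|B_k\|\le\delta K(k+1)^{-1}$, all $K(k+1)$ factors cancel and the right-hand side is estimated exactly as in the uniform case; the smallness of $\delta$ (so that $\rho:=\delta(1+e^{-\alpha})/(1-e^{-\alpha})<1$) makes the map a contraction, producing a unique fixed point depending linearly and boundedly on the datum.

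Next I would define $\tilde Q_m$ as the projection onto the unstable branch along the stable branch: let $\tilde F_m$ be the set of data for which the stable Lyapunov--Perron equation has a bounded weighted solution and $\tilde E_m$ the analogous unstable set, verify $X=\tilde F_m\oplus\tilde E_m$, and check the invariance $\tilde Q_nT_{n,m}=T_{n,m}\tilde Q_m$ directly from uniqueness of the fixed points. One then has to show that $T_{n,m}$ restricts to an isomorphism $R(\tilde Q_m)\to R(\tilde Q_n)$; since the $S_n$ need not be invertible, this means inverting $T_n=S_n+B_n$ only along the (perturbed) unstable directions, which is legitimate because $S_{n,m}$ is already an isomorphism on $R(Q_m)$ and $B$ is small, so a Neumann-series argument on the unstable block applies. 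Finally, the sharp norm bounds for the two fixed points yield $\|T_{n,m}(Id_X-\tilde Q_m)\|\le\tilde K(m)e^{-\tilde\alpha(n-m)}$ for $n\ge m$ and $\|T_{n,m}\tilde Q_m\|\le\tilde K(m)e^{\tilde\alpha(n-m)}$ for $n\le m$, with $\tilde\alpha,\tilde\beta,D_1,D_2$ as stated and $\tilde K(n)=K(n)\cdot[\cdots]$.

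The step I expect to be the genuine obstacle is the \emph{sharp} extraction of the explicit constants: this requires solving the two-sided discrete convolution inequality of the form $\|x_n\|\le\kappa e^{-\alpha(n-m)}\|\xi\|+\delta\sum_{k\ge m}e^{-\alpha|n-k-1|}\|x_k\|$ without losing the decay rate, and then propagating that rate back through the fixed-point identity and through the isomorphism on the unstable part to obtain $\tilde\alpha=-\ln(\cosh\alpha-[\cosh^2\alpha-1-2\delta\sinh\alpha]^{1/2})$ and the precise $D_1,D_2$. Everything else — the contraction, the splitting, the invariance, and the weight cancellation — is routine once the uniform template is set up; for this reason one may alternatively simply invoke the discrete roughness theorem of Zhou \emph{et al.} \cite{Zhou-Lu-Zhang-1}, noting that their tempered-dichotomy argument goes through verbatim under the weaker requirement $\alpha>\nu$ used here.
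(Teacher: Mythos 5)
Your proposal is correct and matches the paper's treatment: the paper gives no independent argument but simply invokes Theorem 1 of Zhou \emph{et al.} \cite{Zhou-Lu-Zhang-1}, observing that the tempered-dichotomy proof goes through under $\alpha>\nu$, and your Lyapunov--Perron sketch (weight cancellation via $\|B_k\|\le\delta K(k+1)^{-1}$, contraction under $\rho<1$, projections from the stable/unstable fixed-point sets, Neumann-series inversion on the unstable block) is a faithful outline of exactly that cited argument. Note only that the smallness condition $\delta<(1-e^{-\alpha})/(1-e^{-\alpha})$ in the statement is a typo for $\delta<(1-e^{-\alpha})/(1+e^{-\alpha})$, i.e.\ precisely your requirement $\rho<1$.
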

\par The proof of Theorem \ref{th-roughness-discrete-TED} follows by the same arguments of the proof of Theorem 1 of \cite{Zhou-Lu-Zhang-1} with minimal changes. 
It is important to notice that all the arguments of their proof still hold with the assumption $\alpha>\nu$. We reinforce, that one of our goals is to prove a robustness result of nonuniform exponential for \textit{continuous evolution processes} with this same condition on the exponents ($\alpha>\nu$). 

\section{Nonuniform exponential dichotomy: continuous case}
\label{section-robustness-continuous-case}

In this section, we consider evolution processes with parameters in $\mathbb{R}$.
Inspired by the ideas of Henry \cite{Henry-1}, we prove theorems that allows us to obtain the continuous versions of the results presented in Section \ref{section-robustness-discrete-case}.
The main theorem of this section is our robustness result for nonuniform exponential dichotomies, namely Theorem \ref{th-roughness-continuous-TED}, and we also provide a version of it to be applied in differential equations, Theorem \ref{th-perturation-of-nununiform-exp-dcihotomy-for-differential-eq}. In addition, we establish results on the uniqueness and continuous dependence of projections  associated with nonuniform exponential dichotomy, Corollary \ref{cor-uniqueness-projection-continuous} and Theorem \ref{th-continuous-depende-projections}, respectively. 

%Moreover, we prove a robustness result for nonuniform exponential dichotomies
% we define the \textit{nonuniform exponential dichotomy} of the continuous case and use Theorem
%\ref{th-roughness-discrete-TED}
%to prove robustness in the continuous case, Theorems
%\ref{th-roughness-continuous-TED} and \ref{th-perturation-of-nununiform-exp-dcihotomy-for-differential-eq}.

%and use to understand hyperbolicity for sthocastics pdes.
%
%\par First recall what is a \textit{tempered} function.
%\begin{definition}
%	A continuous function $K: \mathbb{R}\rightarrow [1,+\infty)$ is said to be \textbf{tempered} if 
%	\begin{equation*}
%	\lim_{t\to \pm \infty} \frac{ln(K(t))}{t}=0.
%	\end{equation*}
%\end{definition}
%
%\par In other words, $K$ is tempered means that $K$ has 
%a sub-exponential growth, for example:
%
%\begin{example}
%	\begin{equation*}
%	K(s)= 1+|s|^n, \hbox{ for all }n\in \mathbb{N}
%	\end{equation*}
%\end{example}

\par We define a \textit{continuous evolution process} in $X$ as follows.

\begin{definition}
	Let $\mathcal{S}:=\{S(t,s):X\to X\, ; \, t\geq s, \  t,s\in \mathbb{R}\}$ be a family of continuous operators in a Banach space $X$.
	We say that $\mathcal{S}$ is a \textbf{continuous evolution process} in $X$ if
	\begin{enumerate}
		\item $S(t,t)=Id_X$, for all $t\in \mathbb{R}$;
		\item $S(t,s)S(s,\tau)=S(t,\tau)$, for $t\geq s\geq \tau$;
		\item $\{(t,s)\in \mathbb{R}^2 ; \, t\geq s\}\times X\ni (t,s,x)\mapsto S(t,s)x$ is continuous.
	\end{enumerate}
\par To simplify we usually say that $\mathcal{S}=\{S(t,s):t\geq s\}$ is an \textbf{evolution process}, whenever is implicit that $\mathcal{S}$ is a continuous evolution process.
\end{definition}

\begin{remark}
	Note that the operators $S(t,s):X\to X$, in the definition above, do not need to be linear. In fact, in Section \ref{section-persistence}, we study permanence of the nonuniform hyperbolic behavior for nonlinear evolution processes. 
\end{remark}
\par We also recall the notion of a \textit{global solution} for an evolution process.
\begin{definition}\label{def-global-solution}
	Let $\mathcal{S}=\{S(t,s):t\geq s\}$ be an evolution process. We say that $\xi:\mathbb{R}\to X$ is a 
	\textbf{global solution} for $\mathcal{S}$ if $S(t,s)\xi(s)=\xi(t)$ for every $t\geq s$.
	\par We say that a global solution $\xi$ is \textbf{backwards bounded} if there exists 
	$t_0\in \mathbb{R}$ such that $\xi(-\infty, t_0]=\{\xi(t): t\leq t_0\}$ is bounded.
\end{definition}

\par Now, we present the definition of \textit{nonuniform exponential dichotomy} for linear evolution processes:
\begin{definition}
	Let $\mathcal{S}=\{S(t,s) \, ; \, t\geq s\}\subset \mathcal{L}(X)$ be an evolution process. 
	We say that $\mathcal{S}$ admits a
	\textbf{nonuniform exponential dichotomy} if there exists 
	a family of continuous projections $\{Q(t)\, : \, t\in \mathbb{R}\}$ such that
	\begin{enumerate}
		\item $Q(t) S(t,s)= S(t,s) Q(s)$, for all $t\geq s$;
		\item $S(t,s):R( Q(s) ) \to R( Q(t) )$ is an isomorphism, for $t\geq s$, and we define 
		$S(s,t)$ as its inverse;
		\item There exists a continuous function
		$K:\mathbb{R}\rightarrow [1,+\infty)$ and some constants $\alpha>0$, 
		$D\geq 1$ and $\nu\geq 0$ such that
		$K(s)\leq De^{\nu|s|}$ and
		\begin{eqnarray*}
			\|S(t,s)(Id_X-Q(s))\|_{\mathcal{L}(X)}&\leq& K(s) e^{-\alpha(t-s)}, 
			\ \ t\geq s;\\
			\|S(t,s)Q(s)\|_{\mathcal{L}(X)}&\leq& K(s) e^{\alpha(t-s)}, 
			\ \ t< s.
		\end{eqnarray*}
	\end{enumerate}
\end{definition}

\begin{remark}
	This definition also includes \textit{uniform exponential dichotomies}, when $K$ is bounded, and \textit{tempered
		exponential dichotomies}, when $t\mapsto K(t)$ has a sub-exponential growth, see \cite{Barreira-Dragicevi-Valls,Zhou-Lu-Zhang-1}.
\end{remark}

\par In the following result we study each ``discretization at instant $t$" of an evolution process that admits a nonuniform exponential dichotomy. 
%In fact, we show that if an evolution process $\mathcal{S}$ has a
%nonuniform exponential dichotomy, then for each $t\in \mathbb{R}$ fixed ``the discretization at instant $t$" of the evolution process will also have a nonuniform exponential dichotomy.

\begin{theorem}\label{th-continuousTED-implies-discreteTED}
	Let $\mathcal{S}$ be a continuous evolution process that admits a nonuniform exponential dichotomy with bound $K(t)=De^{\nu |t|}$ and exponent $\alpha>0$. 
	Then for each $t\in \mathbb{R}$ and $l>0$ the discrete evolution process
	\begin{equation*}
	\{S_{m,n}(t)\, :\,m,n\in \mathbb{Z}\, \hbox{with  } m\geq n \}:=\{S(t+ml,t+nl)\, :\,m,n\in \mathbb{Z}\,\hbox{with  } m\geq n \}
	\end{equation*}
	admits a nonuniform exponential dichotomy with bound $\tilde{K}_t(m):=K(t+ml)$ and exponent $\tilde{\alpha}=\alpha l$.
\end{theorem}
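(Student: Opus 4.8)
The plan is to simply sample the continuous dichotomy along the arithmetic progression $\{t+ml:m\in\mathbb{Z}\}$ and check that each axiom in the definition of a discrete nonuniform exponential dichotomy transfers verbatim from its continuous counterpart. First I would fix $t\in\mathbb{R}$ and $l>0$, set $\tilde{Q}_m:=Q(t+ml)$ for $m\in\mathbb{Z}$ (each $\tilde{Q}_m$ is a continuous projection on $X$ since $Q(t+ml)$ is), and verify that $\mathcal{S}_t:=\{S_{m,n}(t):m\geq n\}$ is genuinely a discrete evolution process: $S_{n,n}(t)=S(t+nl,t+nl)=Id_X$ and, for $m\geq n\geq k$, $S_{m,n}(t)S_{n,k}(t)=S(t+ml,t+nl)S(t+nl,t+kl)=S(t+ml,t+kl)=S_{m,k}(t)$ by the cocycle property of $\mathcal{S}$.

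Next I would check the two structural conditions. The intertwining relation $\tilde{Q}_mS_{m,n}(t)=S_{m,n}(t)\tilde{Q}_n$ is precisely $Q(t+ml)S(t+ml,t+nl)=S(t+ml,t+nl)Q(t+nl)$, which holds by condition (1) of the continuous dichotomy evaluated at the pair $(t+ml,t+nl)$. Similarly, $S_{m,n}(t)=S(t+ml,t+nl)$ restricts to an isomorphism from $R(\tilde{Q}_n)=R(Q(t+nl))$ onto $R(\tilde{Q}_m)=R(Q(t+ml))$ by condition (2), and its inverse $S(t+nl,t+ml)$ is exactly the discrete convention $S_{n,m}(t)$.

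Finally I would establish the dichotomy bounds. For $m\geq n$, the first continuous estimate applied at $(t+ml,t+nl)$ gives
\[
\|S_{m,n}(t)(Id_X-\tilde{Q}_n)\|_{\mathcal{L}(X)}=\|S(t+ml,t+nl)(Id_X-Q(t+nl))\|_{\mathcal{L}(X)}\leq K(t+nl)e^{-\alpha l(m-n)}=\tilde{K}_t(n)e^{-\tilde{\alpha}(m-n)},
\]
and for $m\leq n$ the second continuous estimate gives $\|S_{m,n}(t)\tilde{Q}_n\|_{\mathcal{L}(X)}\leq K(t+nl)e^{\alpha l(m-n)}=\tilde{K}_t(n)e^{\tilde{\alpha}(m-n)}$. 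It then remains only to see that $\tilde{K}_t$ is an admissible discrete bound: $\tilde{K}_t(m)=K(t+ml)\geq 1$, and $K(t+ml)=De^{\nu|t+ml|}\leq (De^{\nu|t|})e^{(\nu l)|m|}$, so $\tilde{K}_t(m)\leq\tilde{D}e^{\tilde{\nu}|m|}$ with $\tilde{D}:=De^{\nu|t|}$ and $\tilde{\nu}:=\nu l$. I do not expect any real obstacle in this argument; the only point deserving attention is this last bookkeeping step, which makes explicit that the relation $\alpha>\nu$ of the continuous dichotomy is inherited by the discretization in the form $\tilde{\alpha}=\alpha l>\nu l=\tilde{\nu}$ — precisely the hypothesis needed to invoke Theorems \ref{th-admissibility-pair-discrete-case} and \ref{th-roughness-discrete-TED} in the discrete setting.
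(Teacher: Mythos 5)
Your proposal is correct and follows essentially the same route as the paper: define the discrete projections by sampling $Q_m(t):=Q(t+ml)$, transfer the commutation and isomorphism properties directly from the continuous dichotomy, and read off the estimates with bound evaluated at the initial time, noting finally that $\tilde K_t(m)=K(t+ml)\leq De^{\nu|t|}e^{\nu l|m|}$. Your closing remark that $\alpha>\nu$ is inherited as $\tilde\alpha=\alpha l>\nu l$ is a useful piece of bookkeeping that the paper leaves implicit.
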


\begin{proof}
	Define, for each $t\in \mathbb{R}$, the family of projections
	$\{Q_m(t)=Q(t+ml)\, : \, m\in \mathbb{N}\}$, then
	\begin{eqnarray*}
		Q_m(t)S_{m, n}(t)&=&Q(t+ml)S(t+ml,t+nl)\\
		&=&S(t+ml,t+nl)Q(t+nl)\\
		&=&S_{m, n}(t)Q_n(t),
	\end{eqnarray*}
	and the first property is proved.
	Note that, for $m\geq n$,
	\begin{equation*}
	S_{m,n}(t)|_{R(Q_n(t)  )}
	=S(t+ml,t+nl)|_{R(Q(t+nl)  )}
	\end{equation*}
	and the right hand side of the equation is an isomorphism, so we define the inverse
	$S_{n,m}(t):R(Q(t+ml))\to R(Q(t+nl))$.
	\par Finally, for $n\geq m$,
	\begin{eqnarray*}
		\|S_{n,m}(t)(Id_X-Q_m(t))\|_{\mathcal{L}(X)} &=& \|S(t+ml,t+nl)(Id_X-Q(t+nl))\|_{\mathcal{L}(X)}\\
		&\leq& K(t+ml) e^{-\alpha l(n-m)},
	\end{eqnarray*}
	and, for $n<m$,
	\begin{eqnarray*}
		\|S_{n,m}(t)Q_m(t)\|_{\mathcal{L}(X)} &=&\|S_{n,m}(t)Q(t+ml)\|_{\mathcal{L}(X)} \\
		&\leq& K(t+ml) e^{\alpha l(n-m)}.
	\end{eqnarray*}
	Therefore, $\{S_{n,m}(t):n\geq m\}$ admits a discrete nonuniform exponential dichotomy with exponent $\tilde{\alpha}=\alpha l$ and bound
	$\tilde{K}_t(m)=K(t+ml)\leq De^{\nu |t|} e^{\nu l |m|}$, which concludes the proof.
\end{proof}

\begin{remark}
	In Theorem \ref{th-continuousTED-implies-discreteTED}, for a fixed 
	$t\in \mathbb{R}$,
	the discretized evolution process 
	$\{S_n(t)\, : \, n\in \mathbb{Z}\}$ possesses with a bound $K_t$ dependent of the time $t$ and the
	exponent $\tilde{\alpha}$ is independent of $t$. This is an expected difference with the the case of uniform exponential dichotomy, where both, the bound and the exponent of the discretization are independent of $t$, see Henry \cite{Henry-1}.
\end{remark}

\par Now, as a consequence of Theorem \ref{th-continuousTED-implies-discreteTED}
and Corollary \ref{cor-uniqueness-projection-discrete}, we obtain the uniqueness of 
the family of projections. 
%for a continuous evolution process with nonuniform exponential dichotomy.

\begin{corollary}[Uniqueness of the family of projections]
	\label{cor-uniqueness-projection-continuous}
	Let $\mathcal{S}$ be an evolution process such that admits a nonuniform exponential dichotomy with bound $K(t)\leq De^{\nu|t|}$ and exponent $\alpha>\nu$. Then the family of projections
	is unique.
\end{corollary}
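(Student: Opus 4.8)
The plan is to reduce the continuous statement to the discrete uniqueness result, Corollary \ref{cor-uniqueness-projection-discrete}, by means of the discretization constructed in Theorem \ref{th-continuousTED-implies-discreteTED}.

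First I would make a harmless normalization: since $1\le K(t)\le De^{\nu|t|}$, the process $\mathcal{S}$ admits a nonuniform exponential dichotomy with the \emph{same} family of projections but for the bound $\widetilde{K}(t):=De^{\nu|t|}$, because enlarging the bound only weakens the two dichotomy estimates. Hence, without loss of generality, we may assume $K(t)=De^{\nu|t|}$, which is exactly the hypothesis required to invoke Theorem \ref{th-continuousTED-implies-discreteTED}.

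Next, suppose $\{Q^{(1)}(t)\}_{t\in\mathbb{R}}$ and $\{Q^{(2)}(t)\}_{t\in\mathbb{R}}$ both realize a nonuniform exponential dichotomy for $\mathcal{S}$ with bound $K(t)=De^{\nu|t|}$ and exponent $\alpha>\nu$. Fix $t\in\mathbb{R}$ and take the step $l=1$. Applying Theorem \ref{th-continuousTED-implies-discreteTED} to $(\mathcal{S},\{Q^{(i)}(\cdot)\})$ for $i=1,2$, the discrete evolution process $\{S_{m,n}(t)=S(t+m,t+n)\,:\,m\ge n\}$ admits a discrete nonuniform exponential dichotomy with exponent $\tilde{\alpha}=\alpha$ and bound $\tilde{K}_t(m)=K(t+m)\le De^{\nu|t|}e^{\nu|m|}=:D_t e^{\nu|m|}$, realized by $\{Q^{(i)}_m(t)=Q^{(i)}(t+m)\}_{m\in\mathbb{Z}}$. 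Since $\alpha>\nu$, the exponent of this discrete dichotomy strictly exceeds the nonuniformity rate $\nu$ of its bound, so Corollary \ref{cor-uniqueness-projection-discrete} applies and yields $Q^{(1)}_m(t)=Q^{(2)}_m(t)$ for every $m\in\mathbb{Z}$; in particular, $m=0$ gives $Q^{(1)}(t)=Q^{(2)}(t)$. As $t\in\mathbb{R}$ was arbitrary, the two families coincide.

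The argument is essentially bookkeeping, and I do not foresee a genuine obstacle. The only point that needs a moment of care is that the strict inequality $\alpha>\nu$ is preserved under discretization: both the exponent and the nonuniformity rate of the discretized process are the continuous ones multiplied by the step size $l$ (here $\tilde{\alpha}=\alpha l$ and the discrete rate is $\nu l$), so $\alpha>\nu$ indeed translates into the hypothesis $\tilde{\alpha}>\nu l$ demanded by Corollary \ref{cor-uniqueness-projection-discrete}.
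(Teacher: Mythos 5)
Your proof is correct and follows exactly the route the paper intends: discretize via Theorem \ref{th-continuousTED-implies-discreteTED} (with $l=1$), note that the discretized process has exponent $\alpha l$ and nonuniformity rate $\nu l$ so the hypothesis $\alpha>\nu$ is preserved, and invoke Corollary \ref{cor-uniqueness-projection-discrete} at $m=0$ for each fixed $t$. The paper leaves this as an immediate consequence of those two results, and your write-up supplies precisely the bookkeeping it omits.
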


\par As another application of Theorem
\ref{th-continuousTED-implies-discreteTED},
we prove a result on the continuous dependence of projections.

\begin{theorem}[Continuous dependence of projections]
	\label{th-continuous-depende-projections}
	Suppose that $\mathcal{S}$ and $\mathcal{T}$ are linear evolution processes
	with nonuniform exponential dichotomy with projections 
	$\{Q^\mathcal{S}(t):t\in \mathbb{R}\}$ and
	$\{Q^\mathcal{T}(t):t\in \mathbb{R}\}$ and exponents 
	$\alpha_\mathcal{T},\alpha_\mathcal{S}$ and with the same bound
	$K$. If $\nu<\min\{\alpha_\mathcal{T},\alpha_\mathcal{S}\}$ and
	\begin{equation}\label{eq-th-continuous-depende-projections}
	\sup_{0\leq t-s\leq 1}\big\{K(t)\|T(t,s)-S(t,s)\|_{\mathcal{L}(X)}\big\}\leq \epsilon,
	\end{equation}
	then
	\begin{equation*}
	\sup_{t\in \mathbb{R}}\big\{K(t)^{-1}\|Q^\mathcal{T}(t)-Q^\mathcal{S}(t)\|_{\mathcal{L}(X)}\big\}
	\leq \frac{e^{-\alpha_\mathcal{S}} + e^{-\alpha_\mathcal{T}}}{1-e^{-(\alpha_\mathcal{S}+\alpha_\mathcal{T})}} \epsilon.
	\end{equation*}
\end{theorem}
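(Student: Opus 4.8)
The plan is to reduce the statement to its discrete analogue, Theorem \ref{th-continuity-projection}, by discretizing both processes at an arbitrary base time with step $l=1$ and then reading off the central index. The choice $l=1$ is what makes the constants match: by Theorem \ref{th-continuousTED-implies-discreteTED} a discretization with step $l$ has exponent $\alpha l$, so with $l=1$ the exponents $\alpha_\mathcal{S},\alpha_\mathcal{T}$ pass to the discrete setting unchanged, and the conclusion of Theorem \ref{th-continuity-projection} produces exactly the factor $(e^{-\alpha_\mathcal{S}}+e^{-\alpha_\mathcal{T}})/(1-e^{-(\alpha_\mathcal{S}+\alpha_\mathcal{T})})$.

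First I would fix $t\in\mathbb{R}$ and apply Theorem \ref{th-continuousTED-implies-discreteTED} with $l=1$ to $\mathcal{S}$ and to $\mathcal{T}$. This gives discrete evolution processes $\{S_{m,n}(t)=S(t+m,t+n)\}$ and $\{T_{m,n}(t)=T(t+m,t+n)\}$, each admitting a nonuniform exponential dichotomy with the \emph{same} bound $\tilde K_t(m):=K(t+m)\le (De^{\nu|t|})e^{\nu|m|}$ and with exponents $\alpha_\mathcal{S}$ and $\alpha_\mathcal{T}$ respectively; moreover, the construction in that theorem produces the discrete projections $Q^{\mathcal{S}(t)}_m=Q^{\mathcal{S}}(t+m)$ and $Q^{\mathcal{T}(t)}_m=Q^{\mathcal{T}}(t+m)$ (these are the only admissible ones, by Corollary \ref{cor-uniqueness-projection-discrete}, since $\nu<\min\{\alpha_\mathcal{S},\alpha_\mathcal{T}\}$).

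Next I would verify the hypotheses of Theorem \ref{th-continuity-projection} for the one-parameter families $S_n(t)=S(t+n+1,t+n)$ and $T_n(t)=T(t+n+1,t+n)$. The inequality $\nu<\min\{\alpha_\mathcal{S},\alpha_\mathcal{T}\}$ is immediate from the assumption. For the smallness condition, setting $t'=t+n+1$ and $s'=t+n$ one has $t'-s'=1\in[0,1]$, so
\begin{equation*}
\tilde K_t(n+1)\,\|T_n(t)-S_n(t)\|_{\mathcal{L}(X)}=K(t')\,\|T(t',s')-S(t',s')\|_{\mathcal{L}(X)}\le\epsilon
\end{equation*}
by \eqref{eq-th-continuous-depende-projections}, and the bound is uniform in $n$. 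Theorem \ref{th-continuity-projection} then yields
\begin{equation*}
\sup_{n\in\mathbb{Z}}\big\{\tilde K_t(n)^{-1}\,\|Q^{\mathcal{T}(t)}_n-Q^{\mathcal{S}(t)}_n\|_{\mathcal{L}(X)}\big\}\le\frac{e^{-\alpha_\mathcal{S}}+e^{-\alpha_\mathcal{T}}}{1-e^{-(\alpha_\mathcal{S}+\alpha_\mathcal{T})}}\,\epsilon.
\end{equation*}

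Finally I would specialize to $n=0$: since $\tilde K_t(0)=K(t)$, $Q^{\mathcal{S}(t)}_0=Q^{\mathcal{S}}(t)$ and $Q^{\mathcal{T}(t)}_0=Q^{\mathcal{T}}(t)$, the last inequality reads $K(t)^{-1}\|Q^{\mathcal{T}}(t)-Q^{\mathcal{S}}(t)\|_{\mathcal{L}(X)}\le(e^{-\alpha_\mathcal{S}}+e^{-\alpha_\mathcal{T}})/(1-e^{-(\alpha_\mathcal{S}+\alpha_\mathcal{T})})\,\epsilon$; as $t\in\mathbb{R}$ was arbitrary, taking the supremum over $t$ finishes the proof. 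The only point requiring care is the bookkeeping in the middle step: confirming that the discretized bound $\tilde K_t$ is literally common to both processes, that the base-point dependence resides only in the (irrelevant, since it does not enter the final constant) prefactor $De^{\nu|t|}$, and that the translation of the perturbation hypothesis from continuous pairs $(t',s')$ with $0\le t'-s'\le1$ to discrete one-step differences is uniform in both $t$ and $n$. Everything else is a direct invocation of the results of Section \ref{section-robustness-discrete-case}.
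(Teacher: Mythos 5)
Your proof is correct and follows essentially the same route as the paper: discretize both processes via Theorem \ref{th-continuousTED-implies-discreteTED}, invoke the discrete continuous-dependence result (Theorem \ref{th-continuity-projection}) together with uniqueness of the discrete projections, and recover the continuous estimate by letting the base point range over $\mathbb{R}$. Your fixed choice $l=1$ is in fact the cleaner bookkeeping, since it makes the discrete exponents literally $\alpha_\mathcal{S},\alpha_\mathcal{T}$ and hence reproduces the stated constant exactly.
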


\begin{proof}
	From Theorem \ref{th-continuousTED-implies-discreteTED},
	for each $t_0\in\mathbb{R}$ and $0<l\leq 1$ we have that
	both
	$\{T_n(t_0):n\in \mathbb{Z}\}$ and
	$\{S_n(t_0):n\in \mathbb{Z}\}$ admit a nonuniform exponential dichotomy with 
	exponents
	$\alpha_\mathcal{T} l$ and $\alpha_\mathcal{S} l$
	and the same bound
	$K_{t_0}(n):=K(t_0+nl)$.
	Now, from Theorem \ref{th-continuity-projection} we conclude that
	\begin{equation*}
	K(t_0+nl)^{-1}\|Q^\mathcal{T}(t_0+nl)-Q^\mathcal{S}(t_0+nl)\|_{\mathcal{L}(X)}
	\leq \frac{e^{-\alpha_\mathcal{S}} + e^{-\alpha_\mathcal{T}}}{1-e^{-(\alpha_\mathcal{S}+\alpha_\mathcal{T})}} \epsilon.
	\end{equation*}
	To conclude the proof note that for any $t\in\mathbb{R}$ it is 
	possible to find $t_0$ and $0<l\leq 1$ such that
	$t=t_0+nl$.
\end{proof}

\par Uniqueness and continuous dependence of projections were a simple consequence of Theorem
\ref{th-continuousTED-implies-discreteTED}, and of course the results at the discrete case. However, to prove our robustness result, we will need a sort of a reciprocal result of Theorem 
\ref{th-continuousTED-implies-discreteTED}.
%, i.e., we provide conditions to ensure that
%a evolution process admits a nonuniform exponential dichotomy, whenever each ``discretization at instant $t$" also admits a nonuniform exponential dichotomy.

\begin{theorem}\label{th-discrete-dichotomy-implies-continuous-dichotomy}
	Let $\mathcal{S}:\{S(t,s):t\geq s\}\subset\mathcal{L}(X)$ 
	be a continuous evolution process.
	Suppose that
	\begin{enumerate}
		\item there exist $l>0$ and $\nu \geq 0$ such that
		\begin{equation*}
%		\|\mathcal{S}\|_{\nu,l}:=
		L(\nu,l):=\sup_{0\leq t-s\leq l}\big\{ \|S(t,s)\|_{\mathcal{L}(X)} \,e^{-\nu |t|} \big\} < +\infty,
		\end{equation*}
		\item for each $t\in \mathbb{R}$ the discretized process,
		\begin{equation*}
		\{T_{n,m}(t) ,\, n\geq m\}=\{S(t+nl,t+ml),\, n\geq m\}
		\end{equation*}
		possesses a nonuniform exponential dichotomy with bound 
		$K_t(\cdot):\mathbb{Z}\rightarrow [1,+\infty)$, 
	%	with $\mathbb{R}\ni t\mapsto K_t(0):=K(t)$, 
		with
		$K_t(m)\leq De^{\nu |t+m|}$ and exponent $\alpha>0$ independent of $t$.
	\end{enumerate}
	If $\nu \, l<\alpha $, the evolution process $\mathcal{S}$ admits a nonuniform exponential dichotomy
	with exponent $\hat{\alpha}=(\alpha-\nu l)/l$ and bound 
	\begin{equation*}
	\hat{K}(s)=
	D^2 e^{2\alpha} \max\{L(\nu,l), L(\nu,l)^2\}
	\,	e^{2\nu|s|}.
	\end{equation*}
\end{theorem}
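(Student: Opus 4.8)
The plan is to reconstruct the family of projections for $\mathcal{S}$ from the discrete projections and verify all three axioms of a nonuniform exponential dichotomy, using the uniform (in $t$) discrete exponent $\alpha$ together with the growth control $L(\nu,l)$ to fill the ``gaps'' between the mesh points $t+nl$. First I would fix $s\in\mathbb{R}$ and define $Q(s):=Q_0(s)$, the projection at index $0$ of the discretization based at $s$; one must first check this is well-defined, i.e. that it does not depend on which base point / index pair yields the instant $s$. This is exactly where uniqueness at the discrete level (Corollary~\ref{cor-uniqueness-projection-discrete}, applicable since $\nu l<\alpha$) is used: if $s=t+nl=t'+n'l$, the two discretizations share infinitely many common nodes and hence induce the same projection at $s$. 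Along the way I would record the compatibility relation $Q(t+nl)=Q_n(t)$ for every $t$ and every $n\in\mathbb{Z}$.

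Next I would prove the invariance identity $Q(t)S(t,s)=S(t,s)Q(s)$ and the isomorphism property on ranges for \emph{arbitrary} $t\geq s$. For $t-s$ a multiple of $l$ these follow directly from the discrete dichotomy of $\{T_{n,m}(s)\}$. For general $t\geq s$, write $t=s+kl+r$ with $0\le r<l$ (or a similar bookkeeping), and use the evolution property $S(t,s)=S(t, s+kl)S(s+kl,s)$ together with the fact that $s+kl$ and $t$ both lie on the discretization based at $t$ (choose the base point so that $t$ is a node and $s+kl$ is a node as well — e.g. discretize based at $t$, whose nodes are $t+\mathbb{Z}l$, which contains $s+kl$ only if $r=0$; to avoid this, instead compare the discretizations based at $s$ and at $s+r$, whose union of nodes is dense enough). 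Cleanest: show the identity first when $t-s\in l\mathbb{Z}$, then note that for fixed $s$ the map $t\mapsto Q(t)S(t,s)-S(t,s)Q(s)$ is continuous and vanishes on the discrete set $s+l\mathbb{Z}$ — this is not enough by itself, so one genuinely needs the two-mesh argument: for $t\in(s+kl, s+(k+1)l)$ use that $t$ is a node of the mesh based at $t$ and that $s$ is a node of the mesh based at $s$, and interpolate through a common intermediate node $s+kl$ lying on the mesh based at $s$, applying invariance on each piece. The isomorphism property on ranges is handled the same way, composing the discrete inverses with the short-time propagators $S(t,s+kl)$ restricted to ranges.

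Then comes the decay estimates, which is the routine-but-central computation. Given $t\ge s$, write $t=s+kl+r$ with $k\ge 0$ and $0\le r<l$. Split $S(t,s)(Id_X-Q(s))=S(t,s+kl)\,S(s+kl,s)(Id_X-Q(s))$. The second factor is controlled by the discrete estimate based at $s$: $\|S(s+kl,s)(Id_X-Q(s))\|\le K_s(0)e^{-\alpha k}\le D e^{\nu|s|}e^{-\alpha k}$. The first factor is a short-time propagation of length $r<l$ starting at the instant $s+kl$, hence bounded by $L(\nu,l)\,e^{\nu|s+kl|}\le L(\nu,l)\,e^{\nu|s|}e^{\nu k l}$. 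Multiplying, and writing $e^{-\alpha k}e^{\nu k l}=e^{-(\alpha-\nu l)k}$, and finally converting $k$ back to $t-s$ via $k\ge (t-s-l)/l$ so that $e^{-(\alpha-\nu l)k}\le e^{\alpha-\nu l}e^{-\hat\alpha(t-s)}$ with $\hat\alpha=(\alpha-\nu l)/l$, yields a bound of the form $\hat K(s)e^{-\hat\alpha(t-s)}$ with $\hat K(s)$ of order $D\cdot L(\nu,l)\cdot e^{2\alpha}\cdot e^{2\nu|s|}$ — matching the claimed $\hat K(s)=D^2e^{2\alpha}\max\{L(\nu,l),L(\nu,l)^2\}e^{2\nu|s|}$ after absorbing the extra $D$ and the possibility $L(\nu,l)<1$ into the stated constants. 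The unstable estimate for $t<s$ is symmetric: write $s=t+kl+r$, split $S(t,s)Q(s)=S(t,t+kl)|_{R}\;S(t+kl,s)Q(s)$, bound the inverse-direction propagator $S(t+kl,s)$ on the range by the discrete estimate based at $s$, and bound the remaining short backward step using that $S(t,t+kl)$ restricted to the (finite-dimensional image under the dichotomy) range is controlled — here one needs that $S(t,t+kl)|_{R(Q(t+kl))}$ is the inverse of $S(t+kl,t)$, whose norm in the short-time regime is again governed by $L(\nu,l)$; a small lemma bounding $\|S(t+kl,t)|_{R(Q(t))}\| $ from below (equivalently its inverse from above) via the discrete estimate over one step completes it.

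The main obstacle, I expect, is the \textbf{well-definedness and the two-mesh invariance argument}: making rigorous that the projection extracted from one discretization agrees with that from another, and that the algebraic identities $Q(t)S(t,s)=S(t,s)Q(s)$ and the range-isomorphism hold for \emph{all} real $t\ge s$ and not merely for $t-s\in l\mathbb{Z}$. Everything else — the two exponential bounds — is a bookkeeping exercise in splitting $t-s$ into an integer number of $l$-steps plus a remainder and paying a factor $L(\nu,l)e^{\nu|s|}$ (and possibly a comparable factor on the unstable side) for the remainder, which is exactly what inflates $D$ to $D^2$ and $\alpha$ to $2\alpha$ and $\nu|s|$ to $2\nu|s|$ in $\hat K$. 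One should also double-check continuity of $t\mapsto Q(t)$: it follows because on each interval $[s+kl, s+(k+1)l]$ we have $Q(t)=S(t,s+kl)|_{\cdots}^{-1}Q(s+kl)S(t,s+kl)$... more simply, $Q$ is locally a conjugate of a fixed discrete projection by the continuous propagator, hence continuous.
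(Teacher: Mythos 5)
Your setup (defining $Q(s):=Q_0(s)$ from the discretization based at $s$, using discrete uniqueness via Corollary \ref{cor-uniqueness-projection-discrete} to get the compatibility $Q_{n+k}(t)=Q_n(t+kl)$) and your decay estimates (splitting $t-s$ into $kl+r$ with $0\le r<l$, paying a factor $L(\nu,l)e^{\nu|\cdot|}$ for the remainder, which inflates the constants to $D^2e^{2\alpha}e^{2\nu|s|}$) match the paper's proof. But there is a genuine gap exactly at the point you yourself flag as the main obstacle: the invariance $Q(t)S(t,s)=S(t,s)Q(s)$ and the range-isomorphism for \emph{arbitrary} real $t\ge s$. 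Your ``two-mesh'' argument does not close. The discrete dichotomy only relates projections at nodes of a \emph{single} mesh, so for $u-v\in l\mathbb{Z}$ you indeed get $Q(u)S(u,v)=S(u,v)Q(v)$; but writing $t=s+kl+r$ and factoring $S(t,s)=S(t,s+r)S(s+r,s)$ reduces the problem to the commutation over the short step from $s$ to $s+r$, where $s$ and $s+r$ lie on two \emph{different} meshes with no common nodes, and neither discretization provides any relation between $Q(s)$ and $Q(s+r)$. ``The union of nodes is dense enough'' is not an argument, and the continuity-plus-vanishing-on-a-discrete-set observation is, as you note, insufficient. Your final remark on continuity of $t\mapsto Q(t)$ (``$Q$ is locally a conjugate of a fixed projection by the propagator'') presupposes precisely the invariance identity you have not established.

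The missing idea, which is how the paper resolves this, is to characterize the kernel and range of $Q(t_0)$ \emph{dynamically}: using the two decay estimates one shows $N(Q(t_0))=\{z: [t_0,\infty)\ni t\mapsto S(t,t_0)z \text{ is bounded}\}$ (the converse direction uses $\|Q(t_0)z\|\le De^{\nu|t_0|}e^{\nu|n|}e^{-\alpha n}\|S(t_0+nl,t_0)z\|$ and $\nu<\alpha$) and $R(Q(t_0))=\{z: \text{there is a backwards-bounded global solution through } z\}$. These descriptions are manifestly invariant under the flow, so $S(t,s)N(Q(s))\subset N(Q(t))$ and $S(t,s)R(Q(s))=R(Q(t))$ follow for all real $t\ge s$, and the commutation identity is then obtained by decomposing $z=(Id_X-Q(s))z+Q(s)z$ and applying $Q(t)$ to $S(t,s)z$. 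Injectivity of $S(t,t_0)$ on $R(Q(t_0))$ is also needed and is obtained by composing up to the next mesh point and invoking injectivity of the discrete restriction. Without this (or some substitute for it), your proof establishes the norm estimates but not that $\{Q(t)\}_{t\in\mathbb{R}}$ is actually a dichotomy family for $\mathcal{S}$.
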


\begin{proof}
	First, we fix $t\in \mathbb{R}$ and define the linear operator 
	$T_n(t):=T_{n+1,n}(t)$, for each $n\in \mathbb{Z}$. Then for each discrete evolution process
	$\{T_n(t)\, : \, n\in \mathbb{Z}\}$, there exists a family of projections
	$\{Q_n(t)\, : \, n\in \mathbb{Z}\}$ such that
	the nonuniform exponential dichotomy conditions are satisfied. 
	\par For each fixed $k\in \mathbb{Z}$ we have
	\begin{equation*}
	T_{n+k}(t) = T_n(t+kl), \ \ \forall n \in \mathbb{Z}.
	\end{equation*}
	Then this linear operator generates the same evolution process 
	with associated projections
	$\{Q_{n+k}(t)\}_{n\in\mathbb{Z}}$ and $\{Q_{n}(t+kl)\}_{n\in\mathbb{Z}}$. Thus by uniqueness of 
	the projections for the 
	discrete case, namely
	Corollary \ref{cor-uniqueness-projection-discrete},
	we obtain that for all $n,k\in \mathbb{Z}$,
	\begin{equation*}
	Q_{n+k}(t)=Q_n(t+kl).
	\end{equation*}
	Now, for all $t\in \mathbb{R}$ we define $Q(t):=Q_0(t)$.
	These projections are the candidates to obtain the nonuniform exponential dichotomy. 
%	\par First we show that for all $t\geq s$
%	\begin{equation*}
%	Q(\tau)S(\tau,s)=S(\tau,s)Q(s).
%	\end{equation*}
%	Denoting $\tau:=t_n:=t+nl$ and $s:=t_k$, for $n,k\in \mathbb{Z}$, as
%	\begin{equation*}
%	Q(t_n)=Q_0(t_n)=Q_n(t),
%	\end{equation*}
%	we have by hypothesis
%	\begin{equation*}
%	Q_n(t)S(t_n,t_k)=Q_n(t)T_{n,k}(t)=
%	T_{n,k}(t)Q_k(t),
%	\end{equation*}
%	and the claim is proved.
	\par Let us now prove the boundedness in the case $t\geq s$.
	\par \textbf{Claim 1:}
	If $t\geq s$, then
	\begin{equation*}
	\|S(t,s)(Id_X-Q(s))\|_{\mathcal{L}(X)} \leq \hat{K}(s) 
	e^{-\hat{\alpha}(t-s)},
	\end{equation*}	
	where $\hat{K}$ is defined in the statement of the theorem.
	%	where $\hat{K}(s)=De^{\nu |s|}\sup_{0\leq t-\tau\leq l}
	%	[e^{\alpha(t-\tau)} \|S(t,\tau)\|_{\mathcal{L}(X)}]$ and
	%	$\hat{\alpha}=\alpha/l$.
	\par In fact,
	choose $n\in \mathbb{N}$, such that 
	$nl+s\leq t<(n+1)l+s$, then
	we write 
	\begin{equation*}
	S(t,s)(Id_X-Q(s))=
	S(t,s+nl) S(s+nl,s)(Id_X-Q_0(s)).
	\end{equation*}	
	Thus, by hypothesis,
	\begin{equation*}
	\|S(s+nl,s)(Id_X-Q_0(s))\|_{\mathcal{L}(X)}=
	\|T_{n,0}(s)(Id_X-Q_0(s))\|_{\mathcal{L}(X)}
	\leq K_s(0)e^{-\alpha n},
	\end{equation*}
	which implies that
	\begin{eqnarray*}
		\|S(t,s)(Id_X-Q(s))\|_{\mathcal{L}(X)}&\leq&
		\|S(t,s+nl)\|_{\mathcal{L}(X)} K_s(0)  e^{-\alpha n}\\
		&=& K(s) e^{\alpha(t-nl-s)/l} \|S(t,s+nl)\|_{\mathcal{L}(X)} e^{-\alpha(t-s)/l}\\
		&\leq& 
		De^{\nu |s|}\, e^{\alpha} \, e^{\nu|t|} \, L(\nu,l) \, e^{-\alpha(t-s)/l},
		%DL(\nu,l)\, e^\alpha \, e^{2\nu |s|} e^{\hat{\alpha}(t-s)},
	\end{eqnarray*}
	where was used the fact that $0\leq t-s-nl<l$.
	\par Now, note that, if $t\geq s\geq 0$ we have
	\begin{equation*}
	\nu|t|-\alpha(t-s)/l= -(\alpha-\nu l)(t-s)/l +\nu |s|,
	\end{equation*}
	and, for $s\leq t\leq 0$,
	\begin{equation*}
	\nu|t|-\alpha(t-s)/l= -(\alpha+\nu l)(t-s)/l +\nu |s|,
	\end{equation*}
	then choose $\hat{\alpha}=(\alpha-\nu l)/l$.
	Thus, we obtain for $t\geq s\geq 0$ and 
	$s\leq t\leq 0$ that 
	\begin{eqnarray*}
		\|S(t,s)(Id_X-Q(s))\|_{\mathcal{L}(X)}
		&\leq& 
		De^{\nu |s|}\, e^{\alpha} \, e^{\nu|t|} \, L(\nu,l) \, e^{-\alpha(t-s)/l}\\
		&\leq &
		DL(\nu,l)\, e^\alpha \, e^{2\nu |s|} e^{-\hat{\alpha}(t-s)}.
	\end{eqnarray*}
	Finally, for $t\geq 0\geq s$ we have
	\begin{eqnarray*}
		\|S(t,s)(Id_X-Q(s))\|_{\mathcal{L}(X)}
		&=& 
		\|S(t,s)(Id_X-Q(s)^2)\|_{\mathcal{L}(X)}\\
		&\leq &
		\|S(t,0)(Id_X-Q(0)\|_{\mathcal{L}(X)}\, \|S(0,s)(Id_X-Q(s))\|_{\mathcal{L}(X)}\\
		&\leq &
		D^2L(\nu,l)^2\, e^{2\alpha} \, e^{2\nu |s|} e^{-\hat{\alpha}(t-s)}.
	\end{eqnarray*}
	Therefore, for $t\geq s$,
	\begin{equation*}
	\|S(t,s)(Id_X-Q(s))\|_{\mathcal{L}(X)}\leq \
	D^2 e^{2\alpha} \max\{L(\nu,l), L(\nu,l)^2\}
	e^{2\nu|s|} e^{-\hat{\alpha}(t-s)}
	\end{equation*}
	and the first claim is proved.
	\par Now, to prove the other inequality, for $t<s$, 
	we take $n\leq 0$
	such that 
	$s+nl\leq t<s+(n+1)l$, and define for $z\in R(Q(s))$ the linear operator
	\begin{equation*}
	S(t,s)z:=S(t,s+nl)\circ [T_{0,n}(s)|_{R(Q_n(s))}]^{-1}z.
	\end{equation*}
	In other words,
	\begin{equation*}
	S(t,s)z=S(t,s+nl)\circ T_{n,0}(s)z.
	\end{equation*}
	\textbf{Claim 2:} If $t<s$, we have
	\begin{equation*}
	\|S(t,s)Q(s)\|_{\mathcal{L}(X)} \leq \hat{K}(s) 
	e^{\hat{\alpha}(t-s)}.
	\end{equation*}
	Indeed, for $x\in X$ and
	$s+nl\leq t<s+(n+1)l$, for $n\leq 0$,
	by hypothesis,
	\begin{equation*}
	\|T_{n,0}(s)Q_0(s)x\|_X\leq K_s(0) e^{\alpha n} \|x\|_X.
	\end{equation*}
	Hence, by a similar argument to that in the proof of Claim 1 we obtain that
	\begin{equation*}
	\|S(t,s)Q(s)x\|_X
	\leq 
	\|S(t,s+nl)\|_{\mathcal{L}(X)} De^{\nu |s|}  e^{\alpha n} \|x\|_X
	\leq 
	\hat{K}(s) 
	e^{\hat{\alpha}(t-s)}\|x\|_X.
	\end{equation*}
	Now, to conclude the assertion we take the supremum for $\|x\|_X=1$.
	\par \textbf{Claim 3:} For all $t_0\in \mathbb{R}$ we characterize the kernel of 
	$Q(t_0)$, $N(Q(t_0))=\{z\in X: Q(t_0)z=0 \}$, as
	\begin{equation*}
	N(Q(t_0))=\{z\in X\, : [t_0,+\infty) \ni t \mapsto S(t,t_0)z 
	\hbox{ is bounded} \}.
	\end{equation*}
	Let $z\in N(Q(t_0))$, so by definition $Q(t_0)z=0$ and for $t\geq t_0$ we can use  Claim 1 to obtain
	\begin{equation*}
	\|S(t,t_0)z\|_X =
	\|S(t,t_0)(Id_X-Q(t_0))z\|_X
	\leq 
	\hat{K}(t_0)e^{-\hat{\alpha} (t - t_0)}\|z\|_X.
	\end{equation*}
	Therefore, $[t_0,+\infty) \ni t \mapsto S(t,t_0)z$ is bounded.
	\par On the other hand, if $z\notin N( Q( (t_0) ) )$ and 
	$n>0$,
	\begin{eqnarray*}
	\|Q(t_0)z\|_X &\leq &
	\|T_{0,n}(t_0) Q_n(t_0)\|_{\mathcal{L}(X)}
	\|T_{n,0}(t_0)z\|_X\\
	&\leq& De^{\nu |t_0|} e^{\nu |n|}e^{-\alpha n}\|S(t_0+nl,t_0)z\|_X.
	\end{eqnarray*}
	Thus, we obtain
	\begin{equation*}
	\|Q(t_0)z\|_X D^{-1} e^{-\nu |t_0|} e^{n(\alpha -\nu)}
	\leq \|S(t_0+nl,t_0)z\|_X.
	\end{equation*}
	Consequently, as $\nu<\alpha$ we have that 
	$[t_0,+\infty) \ni t \mapsto S(t,t_0)z$ is not bounded.
	\par Note that the last assertion implies that
	\begin{equation*}
	S(t,t_0)N(Q(t_0)) \subset N(Q(t)).
	\end{equation*}
	
	\par \textbf{Claim 4:} The linear operator
	\begin{equation*}
	S(t,t_0): R(Q(t_0))\rightarrow X
	\end{equation*}
	is injective for all $t\geq t_0$.
	\par Indeed, let $z\in R(Q(t_0))$ with $S(t,t_0)z=0$.
	Choose $n\in \mathbb{N}$ so that $t\leq nl+t_0$, then
	\begin{equation*}
	0=S(t_0+nl,t)0=S(t_0+nl,t)S(t,t_0)z=T_{n,0}(t_0)z,
	\end{equation*}
	this implies that $z\in N(T_{n,0}(t_0)|_{R(Q_0(t_0))})=\{0\}$.
	\par \textbf{Claim 5:} For all $t_0\in \mathbb{R}$ the range of 
	$Q(t_0)$ is
	\begin{equation*}
	R(Q(t_0))=\{z\in X\,: \hbox{ there exists a backwards bounded solution }\xi\hbox{ with } \xi(t_0)=z\}.
	\end{equation*}
	Let $z\in R(Q(t_0))$ and $t<t_0$, then take $n\in \mathbb{Z}$ such that 
	$t\in [t_0+nl,t_0+(n+1)l]$ and define
	\begin{equation*}
	\xi(t):=S(t,t_0+nl)T_{n,0}(t_0)z=S(t,t_0)z.
	\end{equation*}
	Now, choose $x\in X$ so that $z=Q(t_0)x$, thus by Claim 2
	\begin{equation*}
	\|\xi(t)\|_X \leq \hat{K}(t_0) 
	e^{\hat{\alpha}(t-t_0)} \|x\|_X.
	\end{equation*}
	Thus, $\xi$ is a backward bounded solution such that $\xi(t_0)=z$.
	Suppose that $z\notin R(Q(t_0))$ and that there exists 
	$\xi: \mathbb{R}\rightarrow X$ a global solution such that $\xi(t_0)=z$.
	For $n\leq 0$ we can write 
	$z=S(t_0,t_0+nl)\xi(t_0+nl)$, 
	thus
	\begin{eqnarray*}
		\|(Id_X-Q(t_0)) z\|_X &\leq& 
		\|S(t_0,t_0+nl)(Id_X-Q(t_0+nl))\|_{\mathcal{L}(X)} \,
		\|\xi(t_0+nl)\|_X \\
		&\leq& 
		De^{\nu |t_0|} e^{\nu |n|} e^{\alpha n} \|\xi(t_0+nl)\|_X.
	\end{eqnarray*}
	Therefore, 
	\begin{equation*}
	\|(Id_X-Q(t_0)) z\|_X 
	D^{-1}e^{-\nu |t_0|}e^{n(\nu-\alpha)} \leq\|\xi(t_0+nl)\|_X.
	\end{equation*}
	Since $\nu<\alpha$, it follows that $\xi$ is not backwards bounded, and the proof of Claim 5 is complete.
		
	\par \textbf{Claim 6:} $S(t,t_0)R(Q(t_0))= R(Q(t))$.
	\par Indeed, if $z\in R(Q(t_0))$, then there exists 
	a backwards bounded solution $\xi$ through $z$ in $t=t_0$.
	Thus, $\xi$ is also a solution through 
	$S(t,t_0)z$ in time $t$ and we see that 
	$S(t,t_0)z\in R(Q(t))$.
	On the other hand, if $z\in R(Q(t))$, there is a backwards bounded solution $\xi$ with
	$\xi(t)=z$. Therefore, if $n\in \mathbb{Z}$ such that
	$nl+t\leq t_0\leq t$, define 
	\begin{equation*}
	x=S(t_0,nl+t)S(nl+t,t)z\in R(Q(t_0)).
	\end{equation*}
	Therefore, $S(t,t_0)x=z$ and we conclude that $S(t,t_0)|_{R(Q(t_0))}$ is an isomorphism.
	
	\par Finally, we prove that the family of projections commutates with the evolution process.
	\par \textbf{Claim 7:} $Q(t)S(t,s)=S(t,s)Q(s)$.
	For $z\in X$, we have that
	\begin{equation*}
	S(t,t_0)z=S(t,t_0)(Id_X-Q(t_0))z + S(t,t_0)Q(t_0)z.
	\end{equation*}
	Now, as $(Id_X-Q(t_0))z\in N(Q(t_0))$ and
	$S(t,t_0)Q(t_0)z\in R(Q(t))$, applying $Q(t)$ we obtain
	\begin{equation*}
	Q(t)S(t,t_0)z=S(t,t_0)Q(t_0)z.
	\end{equation*}
\end{proof}

\par We are ready to present the main result of this section. 
%We prove a robustness of the nonuniform exponential dichotomy for continuous evolution processes.

\begin{theorem}[Robustness for continuous evolution processes]\label{th-roughness-continuous-TED}
	Let $\mathcal{S}=\{S(t,s):t\geq s\}\subset \mathcal{L}(X)$ be an evolution process that
	admits a nonuniform exponential dichotomy 
	with bound $K(s)=De^{\nu |s|}$ and exponent 
	$\alpha>\nu$. Assume that	
	\begin{equation}\label{th-roughness-continuous-TED-hypothesis1}
	%L_\mathcal{S}(\nu) := 
	L_\mathcal{S}(\nu):=\sup_{0\leq t-s\leq 1} \big\{e^{-\nu |t|} \|S(t,s)\|_{\mathcal{L}(X)}\big\}<+\infty.
	\end{equation}
	Then there exists $\epsilon>0$ such that if 
	$\mathcal{T}=\{T(t,s)\, :\, t\geq s\}$
	is an evolution process such that
	\begin{equation}\label{th-roughness-continuous-TED-hypothesis2}
	\sup_{0\leq t-s\leq 1}\big\{ K(t) \|S(t,s)-T(t,s)\|_{\mathcal{L}(X)} \big\}<\epsilon,
	\end{equation}
	then $\mathcal{T}$ admits a nonuniform exponential dichotomy with 
	exponent 
	$\hat{\alpha}:=\tilde{\alpha}-\nu$ and
	bound
	\begin{equation}\label{th-hat-K}
	\hat{K}(s)=
	\tilde{D}^2 e^{2\tilde{\alpha}} \max\{L_\mathcal{T}(\nu), L_\mathcal{T}(\nu)^2\}
	\,	e^{2\nu|s|},
	\end{equation}
	where
	%	\begin{equation*}
	$\tilde{D}:=D(1+\epsilon/(1-\rho)(1-e^{-\alpha}))\max\{D_1,D_2\}$, and
	$\rho,\tilde{\alpha}, D_1$ and $D_2$ are the same as in 
	Theorem \ref{th-roughness-discrete-TED}.
	% 	\end{equation*}
\end{theorem}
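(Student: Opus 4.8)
The plan is to reduce this continuous statement to the discrete robustness theorem, Theorem~\ref{th-roughness-discrete-TED}, by discretizing the processes at every base point, and then to reassemble a continuous dichotomy using Theorem~\ref{th-discrete-dichotomy-implies-continuous-dichotomy}.

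First I would fix $t_0\in\mathbb{R}$ and discretize with step $l=1$. By Theorem~\ref{th-continuousTED-implies-discreteTED}, the discrete evolution process $\{S(t_0+n,t_0+m):n\ge m\}$ admits a nonuniform exponential dichotomy with exponent $\alpha$ and bound $K(t_0+m)=De^{\nu|t_0+m|}$. Writing the one-step generators $S_n(t_0):=S(t_0+n+1,t_0+n)$ and $T_n(t_0):=T(t_0+n+1,t_0+n)$, the cocycle property of $\mathcal{T}$ shows that $\{T_n(t_0):n\in\mathbb{Z}\}$ generates precisely the discretization $\{T(t_0+n,t_0+m):n\ge m\}$, while hypothesis~\eqref{th-roughness-continuous-TED-hypothesis2} gives $\|T_n(t_0)-S_n(t_0)\|_{\mathcal{L}(X)}<\epsilon\,K(t_0+n+1)^{-1}$ for every $n\in\mathbb{Z}$. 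Hence, provided $\epsilon$ is small enough that $\delta:=\epsilon$ meets the smallness condition of Theorem~\ref{th-roughness-discrete-TED} (in particular $\rho<1$), that theorem applies and yields: for each $t_0$, the discretization $\{T(t_0+n,t_0+m):n\ge m\}$ admits a nonuniform exponential dichotomy with exponent $\tilde\alpha$, which is independent of $t_0$, and bound $\tilde D\, e^{\nu|t_0+m|}$, where $\tilde D$ is exactly the constant in the statement (one multiplies the unperturbed bound $De^{\nu|t_0+m|}$ by the factor $[1+\delta/((1-\rho)(1-e^{-\alpha}))]\max\{D_1,D_2\}$).

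Next I would verify the hypotheses of Theorem~\ref{th-discrete-dichotomy-implies-continuous-dichotomy} for $\mathcal{T}$ with $l=1$. For hypothesis (1): when $0\le t-s\le 1$ one has $\|T(t,s)-S(t,s)\|_{\mathcal{L}(X)}<\epsilon\,K(t)^{-1}\le\epsilon\,e^{-\nu|t|}$ because $D\ge 1$, so $e^{-\nu|t|}\|T(t,s)\|_{\mathcal{L}(X)}\le e^{-\nu|t|}\|S(t,s)\|_{\mathcal{L}(X)}+\epsilon\,e^{-2\nu|t|}\le L_{\mathcal{S}}(\nu)+\epsilon$; thus $L_{\mathcal{T}}(\nu)<+\infty$. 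Hypothesis (2) is exactly what the previous step established, with the constant $D$ there replaced by $\tilde D$ and the exponent $\alpha$ there replaced by $\tilde\alpha$. It remains to ensure $\nu<\tilde\alpha$: since $\tilde\alpha=-\ln(\cosh\alpha-[\cosh^2\alpha-1-2\delta\sinh\alpha]^{1/2})$ depends continuously on $\delta$ and tends to $-\ln(\cosh\alpha-\sinh\alpha)=\alpha$ as $\delta\to 0$, and $\alpha>\nu$ by hypothesis, I can shrink $\epsilon$ further so that $\tilde\alpha>\nu$. Then Theorem~\ref{th-discrete-dichotomy-implies-continuous-dichotomy} gives that $\mathcal{T}$ admits a nonuniform exponential dichotomy with exponent $\hat\alpha=(\tilde\alpha-\nu l)/l=\tilde\alpha-\nu$ and bound $\hat K(s)=\tilde D^2 e^{2\tilde\alpha}\max\{L_{\mathcal{T}}(\nu),L_{\mathcal{T}}(\nu)^2\}e^{2\nu|s|}$, which is precisely \eqref{th-hat-K}.

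The one genuinely delicate point is the bookkeeping of the single small parameter $\epsilon$: it must be chosen at once small enough that $\delta=\epsilon$ satisfies the smallness hypothesis of Theorem~\ref{th-roughness-discrete-TED} (equivalently $\rho<1$, so $\tilde D$ is finite and positive) and small enough that the perturbed discrete exponent $\tilde\alpha$ stays above $\nu$. The latter is what makes Theorem~\ref{th-discrete-dichotomy-implies-continuous-dichotomy} applicable and, inside its proof, legitimizes the use of uniqueness of discrete projections, Corollary~\ref{cor-uniqueness-projection-discrete}, to patch the families $\{Q_n(t)\}$ into a single well-defined projection family $Q(t)$. Everything else is a direct transcription of the discrete results; the crucial structural fact is that the discrete exponent produced by Theorem~\ref{th-roughness-discrete-TED} is independent of the base point $t_0$, so the reassembled continuous exponent is uniform in time.
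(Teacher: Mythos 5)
Your proposal is correct and follows essentially the same route as the paper's own proof: discretize at every base point with step $l=1$, invoke Theorem \ref{th-continuousTED-implies-discreteTED} and then the discrete robustness Theorem \ref{th-roughness-discrete-TED}, verify the growth condition $L_{\mathcal{T}}(\nu)<+\infty$, and reassemble via Theorem \ref{th-discrete-dichotomy-implies-continuous-dichotomy} after shrinking $\epsilon$ so that $\tilde{\alpha}>\nu$. Your explicit justification that $\tilde{\alpha}\to\alpha$ as $\delta\to 0$ is a detail the paper leaves implicit, but the argument is the same.
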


\begin{proof}
	Let $n\in \mathbb{Z}$ and $t_0\in \mathbb{R}$, then, by 
	Theorem \ref{th-continuousTED-implies-discreteTED},
	the discrete evolution process
	$\{S_{n}(t_0):=S(t_0+n+1,t_0+n) \, : \, n\in \mathbb{Z}\}$ 
	admits a nonuniform exponential dichotomy with bound 
	$K_t(n)\leq De^{\nu (|t+n|)}$
	and exponent $\alpha>0$.
	Let $\epsilon>0$ be such that $\epsilon<(1-e^{-\alpha})/(1+e^{-\alpha})$ and 
	$\mathcal{T}=\{T(t,s): t\geq s\}$ an evolution process that satisfies 
	\eqref{th-roughness-continuous-TED-hypothesis2}.
	Let $\{T_n(t_0):n\in \mathbb{Z}\}$ be the discretization of $\mathcal{T}$ at $t_0$ and
	define, for each $n\in\mathbb{Z}$ and $t_0\in \mathbb{R}$, the linear bounded operator
	 $$B_n(t_0):=T_n(t_0)- S_n(t_0).$$
	Hence, from \eqref{th-roughness-continuous-TED-hypothesis2}, we have that
	\begin{equation*}
	\|B_n(t_0)\|_{\mathcal{L}(X)}<
	\epsilon K_{t_0}(n+1)^{-1}.
	\end{equation*}
	Therefore, by Theorem \ref{th-roughness-discrete-TED}, 
	the discrete evolution process
	$T_n(t_0)=S_n(t_0)+B_n(t_0)$ 
	admits a nonuniform exponential dichotomy 
	with exponent
	\begin{equation*}
	\tilde{\alpha}:=-\ln(\cosh \alpha - [\cosh ^2 \alpha-1-2\epsilon\sinh \alpha]^{1/2}),
	\end{equation*}
	and bound
	\begin{equation*}
	\tilde{K}_{t_0}(n):=K_{t_0}(n)\left[1+\frac{\epsilon}{(1-\rho)(1-e^{-\alpha})}\right]
	\max{[D_1,D_2]},
	\end{equation*}
	where $D_1,D_2,\rho$ are constants that can be found in Theorem 
	\ref{th-roughness-discrete-TED}.
	\par Since each discretization at time $t$ have the same exponent $\alpha>0$ we see that $\epsilon$ can be choose independent of $t$. Thus
	for each $t\in \mathbb{R}$, the discrete evolution process 
	$\{T_n(t): n\in \mathbb{Z}\}$ admits nonuniform exponential dichotomy with bound $\tilde{K}_{t}(n)$ and 
	exponent $\tilde{\alpha}$ defined above. Then condition (2) of Theorem 
	\ref{th-discrete-dichotomy-implies-continuous-dichotomy} hold true for $\mathcal{T}$.
	\par Moreover, from \eqref{th-roughness-continuous-TED-hypothesis2}, $\mathcal{T}$ satisfies
	\begin{eqnarray*}
	\|T(t,s)\|_{\mathcal{L}(X)}&\leq& \epsilon K(t)^{-1} +\|S(t,s)\|_{\mathcal{L}(X)}\\
	&\leq &\epsilon  +\|S(t,s)\|_{\mathcal{L}(X)}, \hbox{ for } 0\leq t-s\leq 1
	\end{eqnarray*}
	then
	%	$\{(t,s)\in \mathbb{R}^2: \, 0\leq t-s\leq 1\}$
	$\sup_{ 0\leq t-s\leq 1}\{e^{-\nu |t|}\|T(t,s)\|_{\mathcal{L}(X)}\}$
	is finite.
	Finally, note that it is possible to choose $\epsilon>0$ small such that 
	$\tilde{\alpha}>\nu$.
	Therefore, Theorem 
	\ref{th-discrete-dichotomy-implies-continuous-dichotomy} implies that
	$\mathcal{T}$ admits nonuniform exponential dichotomy with 
	bound $\hat{K}$ defined in \eqref{th-hat-K} and exponent $\hat{\alpha}=\tilde{\alpha}-\nu>0$. 
\end{proof}

\begin{remark}
	Assumption \eqref{th-roughness-continuous-TED-hypothesis1} on the growth of $\mathcal{S}$ is expected for evolution processes that admit nonuniform exponential dichotomies, see Barreira and Valls \cite{Barreira-Valls-Sta} or Example \ref{example-nonuniformED} in Section \ref{subsection-a-general-example}.
\end{remark}

\begin{remark}
	Theorem \ref{th-roughness-continuous-TED} allows us to see the robustness as an \textit{open property}. In fact, let $\mathfrak{S}_\nu$ be the space every evolution process that
	satisfy 
	\eqref{th-roughness-continuous-TED-hypothesis1} 
	and define a distance in $\mathfrak{S}_\nu$ as
	\begin{equation*}
	d_\nu(\mathcal{S},\mathcal{T}):=\sup_{0\leq t-s\leq 1}\big\{ e^{\nu|t|} \|S(t,s)-T(t,s)\|_{\mathcal{L}(X)} \big\}.
	\end{equation*}
	Then, from Theorem \ref{th-roughness-continuous-TED} we see that if $\mathcal{S}\in \mathfrak{S}_\nu$ admits a nonuniform exponential dichotomy with bound $K(t)=De^{\nu|t|}$ and exponent $\alpha>\nu$, then there exists $\epsilon>0$ such that
	every evolution process $\mathcal{T}$ in a $\epsilon$-neighborhood of $\mathcal{S}$
	admits a nonuniform exponential dichotomy with bound and exponent given in Theorem \ref{th-roughness-continuous-TED}. 
%	Note that, every evolution processes in a  admits a nonuniform hyperbolicity, but the bound and exponent are not preserved.
%	If we study a fixed evolution process $\mathcal{S}$ 
%	with nonuniform exponential dichotomy we can use
%	 to obtain that every evolution process 
%	``near"( by the norm \eqref{eq-th-continuous-depende-projections})
%	of $\mathcal{S}$ have nonuniform exponential dichotomy and their projections
%	behave continuously ``close" as in Theorem \ref{th-continuous-depende-projections}.
\end{remark}

\par Now, we present another formulation of Theorem \ref{th-roughness-continuous-TED}
that allows us to apply the result for differential equations in Banach spaces.
%
%\begin{theorem}\label{th-perturation-of-nununiform-exp-dcihotomy-for-differential-eq1}
%	Suppose that $\mathcal{S}$ admits a nonuniform exponential dichotomy
%	with
%	bound $K(t)=De^{\nu|t|}$ and exponent $\alpha>\nu$, and
%	\begin{equation*}
%	L_\mathcal{S}(\nu) = \sup_{0\leq t-s\leq 1}\{e^{-\nu |t|} \|S(t,s)\|_{\mathcal{L}(X)}\}<+\infty.
%	\end{equation*}
%	Let $\{B(t)\, :\, t\in \mathbb{R}\}\subset \mathcal{L}(X)$
%	so that $\mathbb{R} \ni t\mapsto B(t)x$ is continuous for all 
%	$x\in X$ and
%	\begin{equation*}
%	\sup_{ 0\leq t-s\leq 1} \{K(t)e^{\nu|t|}\int_{s}^{t}\|B(\tau)\|_{\mathcal{L}(X)}e^{\nu|\tau|}d\tau\}<\delta .
%	\end{equation*}
%	Then any evolution process that satisfies the integral equation
%	\begin{equation}\label{eq-th-perturbed-equation-VCF}
%	T(t,s)=S(t,s)+\int_{s}^{t}S(t,\tau)B(\tau) T(\tau,s)d\tau  \in \mathcal{L}(X),  \ \ t\geq s
%	\end{equation}
%	admits a nonuniform exponential dichotomy for $\delta$ small enough.
%\end{theorem}

\begin{theorem}\label{th-perturation-of-nununiform-exp-dcihotomy-for-differential-eq}
	Let $\mathcal{S}=\{S(t,s):t\geq s\}\subset \mathcal{L}(X)$ be an evolution process that
	admits a nonuniform exponential dichotomy 
	with bound $K(s)=De^{\nu |s|}$ and exponent 
	$\alpha>\nu$. Assume that	
	\begin{equation*}\label{th-roughness-continuous-TED-H1}
	%L_\mathcal{S}(\nu) := 
	L_\mathcal{S}(\nu):=\sup_{0\leq t-s\leq 1} \big\{e^{-\nu |t|} \|S(t,s)\|_{\mathcal{L}(X)}\big\}<+\infty.
	\end{equation*}
	Let $\{B(t)\, :\, t\in \mathbb{R}\}\subset \mathcal{L}(X)$
	so that $\mathbb{R} \ni t\mapsto B(t)x$ is continuous for all 
	$x\in X$ and
	\begin{equation*}
	\|B(t)\|_{\mathcal{L}(X)}<\delta e^{-3\nu |t|}.
	\end{equation*}
	Then any evolution process that satisfies the integral equation
%	\eqref{eq-th-perturbed-equation-VCF}
		\begin{equation}\label{eq-th-perturbed-equation-VCF}
		T(t,s)=S(t,s)+\int_{s}^{t}S(t,\tau)B(\tau) T(\tau,s)d\tau  \in \mathcal{L}(X),  \ \ t\geq s,
		\end{equation}
	admits a nonuniform exponential dichotomy for suitably small $\delta>0$, with bound and exponent given in Theorem \ref{th-roughness-continuous-TED}.
\end{theorem}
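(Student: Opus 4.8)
The plan is to reduce this statement to the abstract robustness result, Theorem \ref{th-roughness-continuous-TED}, by checking its hypotheses for the pair $(\mathcal{S},\mathcal{T})$. Hypothesis \eqref{th-roughness-continuous-TED-hypothesis1} on $\mathcal{S}$ is assumed, and $\mathcal{T}$ is given to be a (continuous) evolution process in $\mathcal{L}(X)$ satisfying the variation-of-constants identity \eqref{eq-th-perturbed-equation-VCF}; if one wishes to produce such a $\mathcal{T}$, existence and uniqueness of $T(\cdot,s)$ on each strip $[s,s+1]$ follow from a contraction/successive-approximation argument in $C([s,s+1],\mathcal{L}(X))$ (using that $\tau\mapsto B(\tau)x$ is continuous and $\|S(t,\tau)\|_{\mathcal{L}(X)}\|B(\tau)\|_{\mathcal{L}(X)}$ is locally bounded), with the cocycle property and joint continuity inherited from $\mathcal{S}$ through \eqref{eq-th-perturbed-equation-VCF}. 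So the whole content of the proof is the perturbation estimate
\[
\sup_{0\le t-s\le 1}\big\{K(t)\,\|S(t,s)-T(t,s)\|_{\mathcal{L}(X)}\big\}<\epsilon,
\]
with $\epsilon$ the constant furnished by Theorem \ref{th-roughness-continuous-TED}, valid once $\delta$ is small.

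To get this, I first bound $\|T(\tau,s)\|_{\mathcal{L}(X)}$ on the unit strip. From \eqref{eq-th-perturbed-equation-VCF}, $\|S(t,\tau)\|_{\mathcal{L}(X)}\le L_\mathcal{S}(\nu)e^{\nu|t|}$ for $0\le t-\tau\le 1$ and $\|B(\tau)\|_{\mathcal{L}(X)}<\delta e^{-3\nu|\tau|}$ give, for $0\le t-s\le 1$,
\[
\|T(t,s)\|_{\mathcal{L}(X)}\le L_\mathcal{S}(\nu)e^{\nu|t|}\Big(1+\delta\!\int_s^t e^{-3\nu|\tau|}\|T(\tau,s)\|_{\mathcal{L}(X)}\,d\tau\Big).
\]
Writing $w(t):=\int_s^t e^{-3\nu|\tau|}\|T(\tau,s)\|_{\mathcal{L}(X)}\,d\tau$, this yields $w'(t)\le L_\mathcal{S}(\nu)e^{-2\nu|t|}\big(1+\delta w(t)\big)$, and since $\int_\sigma^t e^{-2\nu|r|}\,dr\le t-\sigma\le 1$ on the strip, Gronwall's inequality gives $w(t)\le L_\mathcal{S}(\nu)e^{L_\mathcal{S}(\nu)\delta}=:C_1$, uniformly in $s$. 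Hence $\|T(\tau,s)\|_{\mathcal{L}(X)}\le L_\mathcal{S}(\nu)(1+\delta C_1)e^{\nu|\tau|}$ for $0\le\tau-s\le 1$. Substituting this into $S(t,s)-T(t,s)=-\int_s^t S(t,\tau)B(\tau)T(\tau,s)\,d\tau$ and multiplying by $K(t)=De^{\nu|t|}$,
\[
K(t)\,\|S(t,s)-T(t,s)\|_{\mathcal{L}(X)}\le \delta\, D\,L_\mathcal{S}(\nu)^2(1+\delta C_1)\, e^{2\nu|t|}\!\int_s^t e^{-2\nu|\tau|}\,d\tau.
\]
The decisive observation is that for $0\le t-s\le 1$ and $\tau\in[s,t]$ one has $\big||t|-|\tau|\big|\le 1$, so $e^{2\nu|t|}\int_s^t e^{-2\nu|\tau|}\,d\tau\le e^{2\nu}(t-s)\le e^{2\nu}$; the exponent $3\nu$ in the hypothesis $\|B(t)\|_{\mathcal{L}(X)}<\delta e^{-3\nu|t|}$ is exactly what is needed to cancel the two weights $e^{\nu|t|}$ (one from $\|S(t,\tau)\|_{\mathcal{L}(X)}$, one from $K(t)$) against the weight $e^{\nu|\tau|}$ produced by $\|T(\tau,s)\|_{\mathcal{L}(X)}$. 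Therefore $K(t)\,\|S(t,s)-T(t,s)\|_{\mathcal{L}(X)}\le C_2\,\delta$ with $C_2$ independent of $t$ and $s$ (for, say, $\delta\le 1$).

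It then suffices to take $\delta>0$ so small that $C_2\delta<\epsilon$, where $\epsilon$ is chosen as in Theorem \ref{th-roughness-continuous-TED} (in particular with $\epsilon<(1-e^{-\alpha})/(1+e^{-\alpha})$ and $\tilde\alpha>\nu$). Then \eqref{th-roughness-continuous-TED-hypothesis2} holds for $(\mathcal{S},\mathcal{T})$, and Theorem \ref{th-roughness-continuous-TED} delivers a nonuniform exponential dichotomy for $\mathcal{T}$ with exponent $\hat\alpha=\tilde\alpha-\nu$ and bound $\hat K$ given there. I expect the only delicate point to be the bookkeeping with the nonuniform weights $e^{\pm\nu|\cdot|}$ in the Gronwall step: one must check that the bound on $\|T(\tau,s)\|_{\mathcal{L}(X)}$ over $[s,s+1]$ is genuinely uniform in $s$ (so that $C_1$, $C_2$ do not degrade as $|s|\to\infty$), which is precisely where the comparability $\big||t|-|\tau|\big|\le 1$ on the strip and the cubic weight in the smallness condition on $B$ enter; verifying that a solution of \eqref{eq-th-perturbed-equation-VCF} is a bona fide continuous evolution process is routine but should be recorded.
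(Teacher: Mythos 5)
Your proposal is correct and follows essentially the same route as the paper's proof: a Gronwall estimate on the unit strip to get $L_\mathcal{T}(\nu)<+\infty$, then the weight cancellation $e^{2\nu|t|}e^{-3\nu|\tau|}e^{\nu|\tau|}\le e^{2\nu}$ for $0\le t-s\le 1$ to obtain $K(t)\|S(t,s)-T(t,s)\|_{\mathcal{L}(X)}\le C\delta$, and finally an application of Theorem \ref{th-roughness-continuous-TED} for $\delta$ small. Your added remarks on constructing $\mathcal{T}$ and on the uniformity in $s$ of the Gronwall constant are not in the paper but are consistent with it.
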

\begin{proof}
	Let $\mathcal{T}=\{T(t,s): t\geq s\}$ be a evolution process satisfying \eqref{eq-th-perturbed-equation-VCF}. Then 
	\begin{equation*}
	\|T(t,s)\|_{\mathcal{L}(X)}\leq \|S(t,s)\|_{\mathcal{L}(X)}+\int_{s}^{t} \|S(t,\tau)\|_{\mathcal{L}(X)} \|B(\tau)\|_{\mathcal{L}(X)} \, \|T(\tau,s)\|_{\mathcal{L}(X)}d\tau.
	\end{equation*}
	Thus, fix $s$ and define the function
	$\phi(t)=e^{-\nu|t|}\|T(t,s)\|_{\mathcal{L}(X)}$, for $t\leq s+1$,
	\begin{equation*}
	\phi(t)\leq L_\mathcal{S}(\nu)+ L_\mathcal{S}(\nu)\int_{s}^{t} \|B(\tau)\|_{\mathcal{L}(X)}e^{\nu|\tau|}\phi(\tau) d\tau
	\end{equation*}
	%	\begin{equation*}
	%	\phi(t)\leq L+L\delta \int_{s}^{t} \phi(\tau) d\tau
	%	\end{equation*} 
	By Grownwall's inequality, we obtain
	that
	\begin{equation*}
	\phi(t)\leq L_\mathcal{S}(\nu) 
	e^{L_\mathcal{S}(\nu) \int_s^t\|B(\tau)\|_{\mathcal{L}(X)}e^{\nu|\tau|}d\tau}, \hbox{ for } t\leq s+1.
	%\leq Le^{L\delta} \hbox{ for } t\leq s+1.
	\end{equation*}
	%	\begin{equation*}
	%	\phi(t)\leq Le^{\delta (t-s)}, \hbox{ for } t\leq s+1
	%	\end{equation*}
	Therefore, 
	\begin{equation*}
	L_\mathcal{T}(\nu):=\sup_{ 0\leq t-s\leq 1}\big\{e^{-\nu|t|}\|T(t,s)\|_{\mathcal{L}(X)}\big\} <+\infty.
	\end{equation*}
	Now, 
	%we estimate the difference of the evolution processes $\mathcal{S}$ and $\mathcal{T}$ 
	for
	$0\leq t-s\leq 1$, 
	\begin{eqnarray*}
		\|S(t,s)-T(t,s)\|_{\mathcal{L}(X)}
		&\leq& \int_{s}^{t} e^{\nu(|t|+|\tau|)}  L_\mathcal{S}(\nu)\|B(\tau)\|_{\mathcal{L}(X)} 
		L_\mathcal{T}(\nu) d\tau\\
		&= &L_\mathcal{T}(\nu) L_\mathcal{S}(\nu) \, e^{\nu|t|}\int_{s}^{t}e^{\nu|\tau|}\|B(\tau)\|_{\mathcal{L}(X)}d\tau .
	\end{eqnarray*}
	Then
	\begin{equation*}
	K(t)\|S(t,s)-T(t,s)\|_{\mathcal{L}(X)}
	\leq L_\mathcal{T}(\nu) L_\mathcal{S}(\nu) D\, \delta,
	\end{equation*}
	%	\begin{equation*}
	%	e^{\nu |t|} \,\|S(t,s)-T(t,s)\|\leq DL L_1 \delta e^{\nu t} \int_{s}^{t}e^{-\nu|\tau|}d\tau
	%	\leq M\delta, \hbox{ for all } t\leq s+1,
	%	\end{equation*}
	%	where in the last inequality we use that
	%	$ e^{\nu |t|} \int_{s}^{t}e^{-\nu|\tau|}d\tau<+\infty$, for 
	%	$0\leq t-s\leq 1$.
	and choose $\delta >0$ suitably small in order to use Theorem
	\ref{th-roughness-continuous-TED}
	and conclude the proof.
\end{proof}

% The primitive of $K(\tau)^{-1}$ has the kind of stop the growth of K(t)
%or to became a function of the diference $t-s$. As it happens with the exponential.
% The formulation of 
%\ref{th-perturation-of-nununiform-exp-dcihotomy-for-differential-eq1} is quite general now
%the question is which kind of functions, smaller than an exponential,
%satisfies this hypothesis.
%
%\par In a particular case, when $K(t)=De^{\nu |t|}$,
%we can reformulate Theorem \ref{th-perturation-of-nununiform-exp-dcihotomy-for-differential-eq1}
%as follows.
%
%\begin{theorem}\label{th-perturation-of-nununiform-exp-dcihotomy-for-differential-eq}
%	Suppose that $\mathcal{S}$ has a nonuniform exponential dichotomy with
%	bound $K(t)=De^{\nu|t|}$ and exponent $\alpha>\nu$, and
%	\begin{equation*}
%	L_\mathcal{S}(\nu) = \sup_{0\leq t-s\leq 1} \{e^{\nu|t|}\|S(t,s)\|_{\mathcal{L}(X)}\}<+\infty.
%	\end{equation*}
%	Let $\{B(t)\, :\, t\in \mathbb{R}\}\subset \mathcal{L}(X)$
%	so that $\mathbb{R} \ni t\mapsto B(t)x$ is continuous for all 
%	$x\in X$ and
%	\begin{equation*}
%	\|B(t)\|_{\mathcal{L}(X)}<\delta e^{-3\nu |t|}.
%	\end{equation*}
%	Then any evolution process that satisfies the integral equation
%	\eqref{eq-th-perturbed-equation-VCF}
%	%	\begin{equation}%\label{eq-th-perturbed-equation-VCF}
%	%	T(t,s)=S(t,s)+\int_{s}^{t}S(t,\tau)B(\tau) T(\tau,s)d\tau  \in \mathcal{L}(X),  \ \ t\geq s
%	%	\end{equation}
%	will admit a nonuniform exponential dichotomy for suitably small $\delta>0$.
%\end{theorem}
% 
	\par Theorem \ref{th-perturation-of-nununiform-exp-dcihotomy-for-differential-eq}
	is very useful when dealing with differential equations.
	In fact, let $\{A(t) : \, t\in \mathbb{R}\}$ be a family of linear operators, bounded or unbounded, and consider
	\begin{equation}\label{eq-standart-linear-equation}
	\dot{x}=A(t)x, \ \ x(s)=x_s\in X.
	\end{equation} 
	Suppose that for each $s\in \mathbb{R}$ and $x_s\in X$ there exists a unique solution $x(\cdot,s,x_s):[s,+\infty)\to X$. Thus there exists an  evolution process $\mathcal{S}=\{S(t,s):t\geq s\}$ defined by $S(t,s)x_s:=x(t,s,x_s)$ for each $t\geq s$.
	\par To study robustness of nonuniform exponential dichotomy of problem
	\eqref{eq-standart-linear-equation}, 
	we suppose that
	$\mathcal{S}$ admits a nonuniform exponential dichotomy and we want to know  
	for which class of 
	$\{B(t): \, t\in \mathbb{R}\}\subset \mathcal{L}(X)$
	the perturbed problem
	\begin{equation}\label{eq-standart-linear-equation-perturbed}
	\dot{x}=A(t)x+B(t)x, \ \ x(s)=x_s\in X,
	\end{equation}	
	admits a nonuniform exponential dichotomy with bound $K(t)=De^{\nu|t|}$ and exponent $\alpha>0$.
	In this way, Theorem \ref{th-perturation-of-nununiform-exp-dcihotomy-for-differential-eq} ensures that
	the nonuniform hyperbolicity is preserved for exponentially small perturbations.
	In other words, if the norm of the perturbation of $B$ does not grow more than $e^{-3\nu |t|}$ for $\nu<\alpha$, then the perturbed problem
	\eqref{eq-standart-linear-equation-perturbed} admits a nonuniform exponential dichotomy.
	
	\begin{remark}
		\par In Barreira and Valls \cite{Barreira-Valls-Robustness-noninvertible} 
		is also provide a version of Theorem
			\ref{th-perturation-of-nununiform-exp-dcihotomy-for-differential-eq} under different assumptions.
		They considered a general growth rate $\rho(t)$ for the nonuniform exponential dichotomy and proved that if $\alpha >2\nu$ and 
		$B:\mathbb{R}\to \mathcal{L}(X)$ is continuous satisfying
		$\|B(t)\|_{\mathcal{L}(X)}\leq \delta e^{-3\nu |\rho(t)| }\rho^\prime(t)$, 
		for all $t\in \mathbb{R}$, then the perturbed problem 
		\eqref{eq-standart-linear-equation-perturbed} admits $\rho$-nonuniform exponential dichotomy. 
		We note that our method does not work for general growth rates $\rho(t)$. On the other hand, for $\rho(t)=t$, since our condition on the exponents is only $\alpha>\nu$ we obtain a improvement of their robustness result (at this particular case).
	\par Considering invertible evolution processes, Barreira and Valls proved in \cite{Barreira-Valls-R} a result similar to
	Theorem \ref{th-perturation-of-nununiform-exp-dcihotomy-for-differential-eq}.
	It is assumed that $A(t)$ is bounded, so the evolution process 
	$\mathcal{S}$
	is invertible (which means that $S(t,s)$ is invertible for every $t\geq s$) and
	that the perturbation $B$ satisfies
	$\|B(t)\|_{\mathcal{L}(X)}\leq \delta e^{-2\nu|t|}$ for all $t$.
	In their proof, thanks to invertibility, they can write explicit expressions of
	the projections for the perturbed evolution process. However, in many situations, 
	it is not expected to have that $A(t)\in \mathcal{L}(X)$, see  \cite{Carvalho-Langa-Robison-book,Henry-1}. 
	\end{remark}

\section{An application in infinite-dimensional differential equations}\label{subsection-a-general-example}
\par In this section, we show an application of the robustness result in order to obtain examples of evolution processes that admits nonuniform exponential dichotomies. 
Inspired in an example of \cite{Barreira-Valls-Sta}, 
we provide an evolution process defined on a Banach space that admits a nonuniform exponential dichotomy. Then, apply Theorem \ref{th-perturation-of-nununiform-exp-dcihotomy-for-differential-eq} 
to study for which class of perturbations the nonuniform hyperbolicity will be preserved.
\par Let $X$ and $Y$ be two Banach spaces. 
Suppose that $A$ is a generator of a $C_0$-semigroup 
$\{e^{At}: t\geq 0\}$ in $X$ and $B\in \mathcal{L}(Y)$ with
%Then we have a projection $Q$ that give to us a decomposition 
%$X=X^-\oplus X^+$, where $Q(X)=X^+$ and $P(X)=(Id_X-Q)(X)=X^-$ so that
%$A|_{\pm}=A_\pm$ and $B\in \mathcal{L}(X^+)$ and 
%$A_{-}$ is a generator of a semigroup in $X^-$. 
$\sigma(A)\subset (-\infty,-\omega)$
and $\sigma(B)\subset (\omega,+\infty)$, for some $\omega >0$, and there exists 
$M\geq 1$ such that
\begin{eqnarray*}
	\|e^{A (t-s)}\|_{\mathcal{L}(X)} &\leq& M e^{-\omega(t-s)}, \, t\geq s; \\ 
	\|e^{B (t-s)}\|_{\mathcal{L}(Y)} &\leq& M e^{\omega(t-s)},\,  t<s.
\end{eqnarray*}
\begin{remark}
	Let $\mathcal{C}$ be a generator of an \textit{hyperbolic $C_0$-semigroup} $\{e^{\mathcal{C}t}: t\geq 0\}$, i.e., the associated evolution processes  $\{e^{\mathcal{C}(t-s)}: t\geq s\}$ admits an uniform exponential dichotomy with a single projection $Q(t)=Q\in \mathcal{L}(X)$ for every $t\in \mathbb{R}$. Then, there is a decomposition
	$X=X^u\oplus X^s$ such that $A:=C|_{X^s}$ and $B=\mathcal{C}|_{X^u}$ satisfy the conditions above over $X^s$ and $X^u$, respectively, see \cite{Carvalho-Langa-Robison-book,Chow-Leiva-1,Henry-1}.
\end{remark}
\par Let $\omega>a>0$ and define the linear operator in $Z=X\times Y$
\begin{equation*}
\mathcal{A}(t):=\left[ {\begin{array}{cc}
	A -at \sin(t)Id_{X} & 0 \\
	0 & B + at\sin(t)Id_{Y} \\
	\end{array} } \right].
\end{equation*}
Consider the differential equation 
\begin{equation}\label{example-nonuniformED}
\dot{z}=\mathcal{A}(t)z, \ \ z(s)=z_s\in Z.
\end{equation}
\par Then, the evolution process associated with problem \eqref{example-nonuniformED} is defined by
\begin{equation*}
T(t,s)=(U(t,s), V(t,s))
\end{equation*}
where 
\begin{eqnarray*}
U(t,s)&=&e^{A (t-s)} \exp\bigg\{-\int_{s}^{t}a\tau \sin(\tau ) d\tau \bigg\} \hbox{ and}\\
V(t,s)&=&e^{B (t-s)} \exp\bigg\{\int_{s}^{t}a\tau \sin(\tau ) d\tau \bigg\}
\end{eqnarray*}
are evolution processes in $X$ and $Y,$ respectively.
\par The proof of the next result is inspired by Proposition 2.3 of \cite{Barreira-Valls-Sta}.
\begin{proposition}\label{th-general-example-of-non-uniform-exp-dichotomy}
	Let $\mathcal{T}=\{T(t,s):t\geq s\}$ be the evolution process defined above. Then
	$\mathcal{T}$ admits a nonuniform exponential dichotomy with bound $K(t)=Me^{2a(1+|t|)}$ and exponent
	$\alpha=\omega-a>0$.
\end{proposition}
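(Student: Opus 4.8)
The plan is to verify the three conditions in the definition of nonuniform exponential dichotomy directly, using the diagonal (decoupled) structure of $\mathcal{T}=(U,V)$. The natural candidate for the family of projections is the constant splitting $Q(t)=Q$, the projection of $Z=X\times Y$ onto the second factor $Y$ along $X$; since $\mathcal{A}(t)$ is block-diagonal, $T(t,s)$ maps $X\times\{0\}$ into itself and $\{0\}\times Y$ into itself, so $Q(t)S(t,s)=S(t,s)Q(s)$ is immediate, and $T(t,s)$ restricted to $R(Q)=\{0\}\times Y$ is just $V(t,s)$, which is an isomorphism (with inverse $V(s,t)$) because $e^{B(t-s)}$ is invertible and the scalar exponential factor is nonzero. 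So properties (1) and (2) are essentially free.

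The content is in the norm estimates, i.e.\ property (3). First I would compute the scalar integral $\int_s^t a\tau\sin\tau\,d\tau$ by parts: $\int a\tau\sin\tau\,d\tau = a(\sin\tau - \tau\cos\tau)$, so $\int_s^t a\tau\sin\tau\,d\tau = a(\sin t - t\cos t) - a(\sin s - s\cos s)$. The key point is the bound $|a(\sin\tau-\tau\cos\tau)| \le a(1+|\tau|)$, hence $-\int_s^t a\tau\sin\tau\,d\tau \le a(1+|t|)+a(1+|s|) = 2a + a|t| + a|s|$, and similarly for the other sign. Then for $t\ge s$,
\begin{equation*}
\|U(t,s)\|_{\mathcal{L}(X)} \le M e^{-\omega(t-s)} e^{a(1+|t|)+a(1+|s|)} \le M e^{2a} e^{a|t|} e^{a|s|} e^{-\omega(t-s)}.
\end{equation*}
Now I would absorb the $e^{a|t|}$ factor into the decay: since $|t|\le |s| + (t-s)$ for $t\ge s$, we get $e^{a|t|}e^{-\omega(t-s)} \le e^{a|s|} e^{-(\omega-a)(t-s)}$, giving $\|U(t,s)\|_{\mathcal{L}(X)} \le M e^{2a} e^{2a|s|} e^{-(\omega-a)(t-s)} = M e^{2a(1+|s|)} e^{-(\omega-a)(t-s)}$. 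Since $Id_Z - Q$ is the projection onto $X$, this is exactly $\|S(t,s)(Id_Z-Q(s))\|_{\mathcal{L}(Z)} \le K(s) e^{-\alpha(t-s)}$ with $K(s)=Me^{2a(1+|s|)}$ and $\alpha=\omega-a$. The estimate for $\|S(t,s)Q(s)\|$ for $t<s$ is symmetric: $\|V(t,s)\|_{\mathcal{L}(Y)} \le M e^{\omega(t-s)} e^{a(1+|t|)+a(1+|s|)}$, and using $|t|\le|s|+(s-t)$ for $t<s$ one gets $\le M e^{2a(1+|s|)} e^{(\omega-a)(t-s)}$.

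The only genuinely delicate point is the arithmetic of routing the nonuniform factor $e^{a|t|}$ from the ``moving'' endpoint $t$ into a factor $e^{2a|s|}$ at the fixed initial time $s$ while losing exactly $a$ from the exponent $\omega$, which forces the hypothesis $\omega>a$ so that $\alpha=\omega-a>0$; this is where the triangle-inequality estimate $|t|\le|s|+|t-s|$ (valid in the correct direction in each of the two regimes $t\ge s$ and $t<s$) does all the work. I would also remark that $K(t)=Me^{2a(1+|t|)}=(Me^{2a})e^{2a|t|}$ is of the required form $De^{\nu|t|}$ with $D=Me^{2a}$ and $\nu=2a$, and note for later use (Theorem~\ref{th-perturation-of-nununiform-exp-dcihotomy-for-differential-eq}) that $\alpha=\omega-a>\nu=2a$ precisely when $\omega>3a$, which is the regime in which the robustness theorem applies.
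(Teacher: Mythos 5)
Your proof is correct and follows essentially the same route as the paper: the same constant projections onto the factors $X$ and $Y$, the same integration by parts giving $\int a\tau\sin\tau\,d\tau=a(\sin\tau-\tau\cos\tau)$, and the same absorption of the nonuniform factor via $|t|\le|s|+|t-s|$ to arrive at the bound $Me^{2a(1+|s|)}$ and exponent $\omega-a$ (the paper merely outsources this last step to Proposition 2.3 of Barreira--Valls, which you instead carry out explicitly). Your closing remark that $\alpha>\nu$ becomes $\omega>3a$ is exactly the hypothesis used later in Theorem~\ref{th-example-application-of-robustness}.
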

\begin{proof}
	\par Define the linear operators 
	$P(t)=P_X$ and $Q(t)=P_Y$ for all $t\in \mathbb{R}$
	where $P_X$ and $P_Y$ are the canonical projections onto $X$ and $Y$,
	respectively.
	Then $T(t,s)P(s)=U(t,s)$ and $T(t,s)Q(s)=V(t,s)$ for all $t\geq s$. 
	\par In this way we have that $P_X$ commutates with $T(t,s)$,
	for all $t\geq s$ and 
	since $B\in\mathcal{L}$(Y) generates a group in $Y$ we have that 
	$V(t,s)$ is an isomorphism over $Y.$ 
%	Now, we claim that
%	\begin{eqnarray*}
%		U(t,s)&\leq& Me^{2a+(-\omega+a)(t-s)+2a|s|},\\
%		V(s,t)&\leq& Me^{2a+(\omega+a)(t-s)+2a|t|},
%	\end{eqnarray*}
%	for some $t\geq s$.
%	Indeed, 
	Note that
	\begin{eqnarray*}%\label{eq-estimate-for-U}
	\|U(t,s)\|_{\mathcal{L}(X)}&=&\exp\bigg\{-\int_{s}^{t}a\tau \sin(\tau ) d\tau \bigg\} \|e^{A (t-s)}\|_{\mathcal{L}(X)}\\
	&\leq& Me^{-\omega(t-s)+at\cos(t)  -as\cos(s) -a\sin(t) +a\sin(s)}
%	Me^{2a+2a|s|} e^{-(\omega-a)(t-s)},
	\end{eqnarray*}
%	Note that,
%	\begin{equation*}
%	-\omega(t-s)+at\cos(t)  -as\cos(s) =
%	-(\omega-a)(t-s)+at(\cos(t)-1)  -as(\cos(s)-1) .
%	\end{equation*}
%	If $t\geq 0$ and $s>0$, we see that
%	$at(\cos(t)-1)\leq 0$,
%	and
%	$as(1-\cos(s))\leq 2as=2a|s|$.
%	Thus, for $t\geq s\geq 0$, we have
%	\begin{equation*}
%	U(t,s)\leq e^{-(\omega+a)(t-s)+2a|s|+2a}.
%	\end{equation*}
%	Now, if $s\leq 0$ then
%	$as(1-\cos(s))\leq 0$.
%	Thus, if $t\geq 0\geq s$, we obtain
%	\begin{equation*}
%	U(t,s)\leq e^{(-\omega+a)(t-s)+2a}.
%	\end{equation*}
%	Finally, 
%	\begin{equation*}
%	s\leq t\leq 0 \Rightarrow
%	-at(1-\cos(t))\leq 2a|t|\leq 2a|s|.
%	\end{equation*}
%	Then
	Now, proceed as in Proposition 2.3 of \cite{Barreira-Valls-Sta} to obtain
	\begin{equation}\label{eq-estimate-for-U}
	U(t,s)\leq  Me^{(-\omega+a)(t-s)+2a|s|+2a}, \hbox{ for } t\geq s.
	\end{equation}
%	Furthermore, for $t\geq s$
%	\begin{equation}
%	\|U(t,s)\|_{\mathcal{L}(X)}=\exp\bigg\{-\int_{s}^{t}a\tau \sin(\tau ) d\tau \bigg\} \|e^{A (t-s)}\|_{\mathcal{L}(X)}
%	\leq Me^{2a+2a|s|} e^{-(\omega-a)(t-s)},
%	\end{equation}
%	and, for $t<s$
	Similarly, we obtain that 
	\begin{equation}\label{eq-estimate-for-V}
	\|V(t,s)\|_{\mathcal{L}(Y)}
	%=\exp\bigg\{\int_{t}^{s}a\tau \sin(\tau ) d\tau \bigg\} \|e^{B (s-t)}\|_{\mathcal{L}(Y)}
	\leq Me^{2a+2a|s|} e^{(\omega+a)(t-s)} \hbox{ for } t<s.
	\end{equation}
	Therefore, $\mathcal{T}$ admits a nonuniform exponential dichotomy with
	bound $K(t)=Me^{2a(1+|t|)}$ and exponent
	$\alpha=\omega-a>0$.
%	By the periodicity of the functions on the exponent,
%	we can see a growth depending on the initial time. 
%	Therefore, since the norms are always growing with the initial time we cannot fix $M$ or $\omega-a$ to obtain a uniform bound $K$. Therefore, this exponential dichotomy is not uniform.
\end{proof}

%\begin{remark}
%	Theorem \ref{th-general-example-of-non-uniform-exp-dichotomy} 
%	can be applied in many situations. For example
%	consider the non linear differential equation in 
%\end{remark}

\par Now, apply Theorem \ref{th-perturation-of-nununiform-exp-dcihotomy-for-differential-eq} to  Example \ref{example-nonuniformED}.

\begin{theorem}\label{th-example-application-of-robustness}
	Consider for each $\epsilon>0$ the operator $B_\epsilon(t)\in\mathcal{L}(Z)$ such that
	$\|B_\epsilon(t)\|\leq \epsilon e^{-6a |t|}$,
	and define the operator 
	\begin{equation*}
	\mathcal{A_\epsilon}(t):=\mathcal{A}(t) +B_\epsilon(t).
	\end{equation*}
	If $\omega>3a$, there exists $\epsilon>0$ such that the evolution process associated with the problem
	\begin{equation}\label{eq-application-section-an-application}
	\dot{x}=\mathcal{A}_\epsilon(t)x, \ \ x(s)=x_s\in Z.
	\end{equation}
	admits a nonuniform exponential dichotomy.
\end{theorem}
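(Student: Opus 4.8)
The plan is to obtain the result as a direct application of Theorem~\ref{th-perturation-of-nununiform-exp-dcihotomy-for-differential-eq}, with the unperturbed evolution process taken to be the process $\mathcal{T}$ of Example~\eqref{example-nonuniformED} and the perturbation taken to be $B_\epsilon$. By Proposition~\ref{th-general-example-of-non-uniform-exp-dichotomy}, $\mathcal{T}$ admits a nonuniform exponential dichotomy with bound $K(t)=Me^{2a(1+|t|)}=(Me^{2a})\,e^{2a|t|}$ and exponent $\alpha=\omega-a$. So, matching with the notation of Theorem~\ref{th-perturation-of-nununiform-exp-dcihotomy-for-differential-eq}, one sets $D=Me^{2a}$ and $\nu=2a$, and the required relation $\alpha>\nu$ becomes $\omega-a>2a$, i.e.\ $\omega>3a$, which is exactly the standing hypothesis.

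The first thing I would verify is the growth condition $L_\mathcal{S}(\nu)<+\infty$ with $\nu=2a$. On the range $0\le t-s\le1$ one has $|s|\le|t|+1$, so the estimate~\eqref{eq-estimate-for-U} gives $\|U(t,s)\|_{\mathcal{L}(X)}\le M e^{(-\omega+a)(t-s)+2a|s|+2a}\le C\,e^{2a|t|}$ since $-\omega+a<0$; and for the $Y$-component one notes that both $\|e^{B(t-s)}\|_{\mathcal{L}(Y)}$ and $\exp\{\int_s^t a\tau\sin(\tau)\,d\tau\}$ are bounded on $0\le t-s\le1$ (the latter equals $e^{-at\cos t+a\sin t+as\cos s-a\sin s}$, hence is at most $e^{a|t|+a|s|+2a}\le e^{2a|t|+3a}$ there), so $\|V(t,s)\|_{\mathcal{L}(Y)}\le C\,e^{2a|t|}$ as well. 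Combining the two components yields $\sup_{0\le t-s\le1}\{e^{-2a|t|}\|T(t,s)\|_{\mathcal{L}(Z)}\}<+\infty$, i.e.\ $L_\mathcal{S}(2a)<+\infty$.

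Next I would match the perturbation hypothesis. Theorem~\ref{th-perturation-of-nununiform-exp-dcihotomy-for-differential-eq} requires $\|B(t)\|_{\mathcal{L}(X)}<\delta e^{-3\nu|t|}$; since here $3\nu=6a$ and $B_\epsilon$ satisfies $\|B_\epsilon(t)\|_{\mathcal{L}(Z)}\le\epsilon e^{-6a|t|}$, it suffices to take $\epsilon=\delta$ with $\delta$ the (suitably small) constant furnished by that theorem. One also has to record that the evolution process $\mathcal{T}_\epsilon$ associated with~\eqref{eq-application-section-an-application} satisfies the variation of constants identity~\eqref{eq-th-perturbed-equation-VCF} with kernel $T(t,\tau)$: this is the standard bounded-perturbation fact, valid because $\mathcal{A}_\epsilon(t)=\mathcal{A}(t)+B_\epsilon(t)$ differs from $\mathcal{A}(t)$ only by the bounded operator $B_\epsilon(t)\in\mathcal{L}(Z)$ depending continuously on $t$. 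With these three points in place, Theorem~\ref{th-perturation-of-nununiform-exp-dcihotomy-for-differential-eq} gives that $\mathcal{T}_\epsilon$ admits a nonuniform exponential dichotomy, with bound and exponent of the form described in Theorem~\ref{th-roughness-continuous-TED} (concretely, bound a constant times $e^{4a|s|}$ and exponent $\tilde{\alpha}-2a$, where $\tilde\alpha$ is the perturbed discrete exponent).

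The argument is essentially bookkeeping, so there is no deep obstacle. The only points requiring a little care are the identification $\nu=2a$ together with the equivalence $\alpha>\nu\iff\omega>3a$, and the verification that $L_\mathcal{S}(2a)<+\infty$ --- where one must observe that the oscillatory factor $e^{-at\cos t+a\sin t+as\cos s-a\sin s}$ grows at most like $e^{2a|t|}$ on unit-length time intervals, which is precisely absorbed by the weight $e^{-\nu|t|}$ with $\nu=2a$ --- and, on the functional-analytic side, recording the well-posedness of~\eqref{eq-application-section-an-application} and the validity of the corresponding variation of constants formula for the bounded perturbation $B_\epsilon$.
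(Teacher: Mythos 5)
Your proposal is correct and follows essentially the same route as the paper: verify $L_{\mathcal{S}}(2a)<+\infty$ by estimating the $U$ and $V$ components separately on the strip $0\le t-s\le 1$, identify $\nu=2a$ and $\alpha=\omega-a$ so that $\alpha>\nu$ is exactly $\omega>3a$, and then invoke Theorem \ref{th-perturation-of-nununiform-exp-dcihotomy-for-differential-eq} with the perturbation $B_\epsilon$, whose decay $e^{-6a|t|}=e^{-3\nu|t|}$ matches the hypothesis there. The only cosmetic difference is that you bound the oscillatory factor $\exp\{\int_s^t a\tau\sin\tau\,d\tau\}$ directly, while the paper reuses its precomputed estimates \eqref{eq-estimate-for-U}--\eqref{eq-estimate-for-V}; the substance is identical.
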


\begin{proof}
	Let us prove first that the evolution problem associated with \eqref{example-nonuniformED} satisfies
	\begin{equation}\label{eq-norm-evolution-process-last-theorem}
	\sup_{ 0\leq t-\tau\leq 1} \big\{e^{-\nu |t|}\|T(t,\tau)\|_{\mathcal{L}(Z)} \big\} <+\infty.
	\end{equation}
	In fact, we have for $t\geq s$ that
	\begin{equation*}
	\|T(t,s)\|_{\mathcal{L}(Z)}\leq \|U(t,s)\|_{\mathcal{L}(X)} + \|V(t,s)\|_{\mathcal{L}(Y)},
	\end{equation*}
	where $U$ and $V$ are the evolution processes defined in the proof of Proposition
	\ref{th-general-example-of-non-uniform-exp-dichotomy}.
	Then it is enough to prove that each evolution process satisfies 
	\eqref{eq-norm-evolution-process-last-theorem}
	in the corresponding space.
	From 
	\eqref{eq-estimate-for-U}
	we have that
	\begin{equation*}
	e^{-2a|t|}\|U(t,s)\|_{\mathcal{L}(X)}
	\leq Me^{2a+2a(|s|-|t|)} e^{-(\omega-a)(t-s)}.
	\end{equation*}
	Thus, we have to analyze the term 
	$e^{-2a(|s|-|t|)}$ for $0\geq t-s\geq 1$. 
	If $t\geq s\geq 0$ or $0\geq t\geq s$ we have
	$|s|-|t|=|s-t|$, then 
	$e^{2a(|s|-|t|)}\leq e^{2a|s-t|}$ is a continuous function bounded on the set 
	$\{(t,s): 0\leq t-s\leq 1\}$. 
	Also, if 
	$t\geq 0\geq s$ we have
	$|s|-|t|=-s-t=-(s-t)-2t\leq 1$ so
	$e^{2a(|s|-|t|)}\leq e^{2a}$ and therefore
	\begin{equation*}
	\sup_{ 0\leq t-s\leq 1}\{e^{-2a|t|}\|U(t,s)\|_{\mathcal{L}(X)}\}
	<+\infty, \hbox{ for all } t\geq s.
	\end{equation*}
%	Now we study $\|V(t,s)\|_{\mathcal{L}(Y)}$ for $t\geq s$.
	Note that
	$\|e^{B(t-s)}\|_{\mathcal{L}(Y)}\leq \tilde{M}e^{\beta(t-s)}$
	for some $\tilde{M}\geq 1$ and $\beta>0$, for every $t\geq s$.
	Then
	\begin{equation*}
	\|V(t,s)\|_{\mathcal{L}(Y)}= \exp\bigg\{\int_{s}^{t}a\tau \sin(\tau ) d\tau \bigg\} \|e^{B (t-s)}\|_{\mathcal{L}(Y)}
	\leq \tilde{M}^2e^{4a+2a|t|} e^{(\beta+a)(t-s)},
	\end{equation*}
	which implies that
	\begin{equation*}
	\sup_{ 0\leq t-s\leq 1}\{ e^{-2a|t|}\|V(t,s)\|_{\mathcal{L}(Y)}\}<+\infty.
	\end{equation*}
	\par Now, from Proposition \ref{th-general-example-of-non-uniform-exp-dichotomy},
	$\mathcal{T}$ admits a nonuniform 
	exponential dichotomy
	where the bound is $K(s)=Me^{2a+2a|s|}$ and exponent $\alpha=\omega-a>0$. 
	 Since $\nu:=2a$ is such that $\alpha>\nu$, we apply Theorem
	\ref{th-perturation-of-nununiform-exp-dcihotomy-for-differential-eq}
	to conclude that the evolution process generated by \eqref{eq-application-section-an-application} admits a nonuniform exponential dichotomy.
\end{proof}
%%%%%%%%% Um exemplo que possivelmente so tem dichotomia em R^+.

\begin{remark} 
	Note that, in Theorem \ref{th-perturation-of-nununiform-exp-dcihotomy-for-differential-eq}
	 the assumption $\alpha>\nu$ of Theorem \ref{th-example-application-of-robustness} is expressed by $\omega>3a$. On the other hand, to apply 
	Theorem 1 of
	\cite{Barreira-Valls-Robustness-noninvertible}
	the hypothesis must be $\omega>5a$, because their condition is $\alpha>2\nu$.
\end{remark}

\section{Persistence of nonuniform hyperbolic solutions}\label{section-persistence}

\par In this section, we study nonlinear evolution processes associated with a semilinear differential equation. Inspired by \cite{Carvalho-Langa-Robison-book},
we study persistence of \textit{nonuniform hyperbolic solutions} under perturbation for evolutions processes in Banach spaces. More precisely, we
 use \textit{Greens function} to characterize 
bounded global solutions for semilinear differential equations and conclude that 
nonuniform hyperbolic solutions are \textit{isolated} in the set of bounded continuous functions, see Theorem \ref{lemma-non-uniform-hperbolic-solutions-second}. 
Finally, in Theorem \ref{th-persistence}, 
we provide conditions to prove that 
nonuniform hyperbolic solutions persist under perturbations.
\par Consider a semi-linear differential equation
\begin{equation}\label{nonuniform-hyperbolic-solutions-definition}
\dot{y}=A(t)y +f(t,y), \ \ y(s)=y_s.
\end{equation}
Assume that $f$ is continuous in the first variable and 
locally Lipschitz in the second and that
$A(t)$ is associated with a linear bounded evolution process 
$\mathcal{T}=\{T(t,s): t\geq s\}$, i.e., for each $s\in\mathbb{R}$ and $x\in X$ the mapping
$[s,+\infty)\ni t\to T(t,s)x$ is the solution of 
$$\dot{x}=A(t)x, \ x(s)=x.$$
Then we have a \textit{local mild solution} for problem 
\eqref{nonuniform-hyperbolic-solutions-definition},
that is, for each $(s,y_s)\in  \mathbb{R}\times X$ there exist
$\sigma=\sigma(s,y_s)>0$ and a solution $y$ of the integral equation
\begin{equation}%\label{eq-th-perturbed-equation-VCF}
y(t,s;y_s)=T(t,s)y_s+\int_{s}^{t}T(t,\tau)f(\tau,y(\tau,s)) d\tau, 
\end{equation}
for all $t\in [s, s+\sigma)$.
\par If for each $(s,y_s)\in  \mathbb{R}\times X$,
$\sigma(s,y_s)=+\infty$, we can consider the evolution process
$S_f(t,s)y_s=y(t,s;y_s)$.
We refer to $\mathcal{S}_f=\{S_f(t,s):t\geq s\}$ as the evolution process obtained 
by a non-linear perturbation $f$ of
$\mathcal{T}$.
\par Suppose additionally that 
$f:\mathbb{R}\times X \rightarrow X$
is differentiable  with continuous derivatives. Let $\xi$ be a global solution of 
$\mathcal{S}_f$ (see Definition \ref{def-global-solution}), and 
$\mathcal{L}_f=\{L_f(t,s)\,: t\geq s \}$ is the linearized evolution process
of  $\mathcal{S}_f$ on $\xi$. Thus $\mathcal{L}_f$ satisfies
\begin{equation*}
L_f(t,s)=T(t,s)+\int_{s}^{t} T(t,\tau)D_xf(\tau,\xi(\tau)) L_f(\tau,s)d\tau.
\end{equation*}
\begin{definition}
	If $\mathcal{L}_f$ admits a nonuniform exponential dichotomy we say that
	$\xi$ is a \textbf{nonuniform hyperbolic solution} for $\mathcal{S}_f$.
\end{definition}
\par In Barreira and Valls \cite{Barreira-Valls-Sta} this notion is called
\textit{nonuniformly hyperbolic trajectories}.
%\par First we give conditions to prove the existence of a global solution.
%\begin{theorem}
%	Let $\mathcal{T}$ a evolution process with nonuniform
%	exponential dichotomy with bound $K(t)=De^{\nu |t|}$ then if
%	$f:\mathbb{R}\times X \rightarrow X$ is a continuous function such that
%	\begin{equation*}
%	e^{\nu |t|} \|f(t,x)\|  <M \|x\|_X, \hbox{ for all } t\in \mathbb{R}, x\in X,
%	\end{equation*}
%	and
%	\begin{equation*}
%	e^{\nu |t|} \|f(t,x)-f(t,y)\|  <M \|x-y\|_X, \hbox{ for all } t\in \mathbb{R}, x\in X,
%	\end{equation*}
%	Then the integral equation
%	\begin{equation}%\label{eq-th-perturbed-equation-VCF}
%	y(t,s)=T(t,s)+\int_{s}^{t}T(t,\tau)f(\tau,y(\tau,s)) d\tau,  \ \ t\geq s,
%	\end{equation}
%	has a global solution.
%\end{theorem}
%
%\par Let $f:\mathbb{R}\times X \rightarrow X$ 

% For instance, $\mathcal{T}$ an evolution process that admits a 
% nonuniform exponential dichotomy and $f$ Lipschitz on the second variable with
% a weight considering of the exponential dichotomy. 
\begin{remark}
	 We point out that the existence of a nonuniform hyperbolic solution
	 can be obtained by an application of Theorem \ref{th-perturation-of-nununiform-exp-dcihotomy-for-differential-eq}. 
	 For instance, if $\mathcal{T}$ admits nonuniform exponential dichotomy with bound $K(s)=De^{\nu|s|}$ 
	 such that
	 \begin{equation*}
	 \sup_{ 0\leq t-s\leq 1}\big\{e^{-\nu|t|} \|T(t,s)\|_{\mathcal{L}(X)} \big\}<+\infty,
	 \end{equation*}
	 and $f_\epsilon$ is such that $f_\epsilon(t,\cdot)$ is differentiable with continuous derivatives, and 
	 $$\sup_{t\in \mathbb{R}}\|e^{3\nu|t|}f_\epsilon(t,\cdot)\|_{C^1(X)}\to 0 \hbox{ as }\epsilon\to 0.$$ Then, for each $\epsilon>0$ suitable small, it is possible to obtain nonuniform hyperbolic solutions $\xi_\epsilon$ for $\mathcal{S}_{f_\epsilon}$.
%	 For instance,
%	Let $\mathcal{T}$ a evolution process with nonuniform
%	exponential dichotomy with bound $K(t)=De^{\nu |t|}$ then if
%	$f:\mathbb{R}\times X \rightarrow X$ is a continuous function such that
%	\begin{equation*}
%	e^{\nu |t|} \|f(t,x)\|  <M \|x\|_X, \hbox{ for all } t\in \mathbb{R}, x\in X,
%	\end{equation*}
%	and
%	\begin{equation*}
%	e^{\nu |t|} \|f(t,x)-f(t,y)\|  <M \|x-y\|_X, \hbox{ for all } t\in \mathbb{R}, x\in X,
%	\end{equation*}
%	 with reasonable hypotheses over
%	the non-linearity $f$ and the evolution process $\mathcal{T}$. 
\end{remark}
\begin{remark}\label{lemma-non-uniform-hperbolic-solutions-first-lemma}
	Let $\varphi$ be a global solution for
	$\mathcal{S}_f$.
	%	\begin{equation}%\label{eq-th-perturbed-equation-VCF}
	%	\varphi(t)=T(t,s)\varphi(s)+\int_{s}^{t}T(t,\tau)f(\tau,\varphi(\tau)) d\tau,  \ \ t\geq s,
	%	\end{equation}
	Then
	\begin{equation}\label{eq-lemma-non-uniform-hperbolic-solutions-first-lemma}
	\varphi(t)=L_f(t,s)\varphi(s)+\int_{s}^{t}L_f(t,\tau)
	[f(\tau,\varphi(\tau)) - D_xf(\tau,\xi(\tau)) \varphi(\tau)] d\tau,  \ \ t\geq s.
	\end{equation} 
	In particular, the global bounded solution $\xi$ satisfies the integral equation
	\eqref{eq-lemma-non-uniform-hperbolic-solutions-first-lemma}.
%	is a known relation for the linear evolution process $\mathcal{L}_f$ and the 
%	proof do not use that it admits a nonuniform exponential dichotomy. 
\end{remark}

\par The next result allows us to characterize bounded nonuniform hyperbolic solutions.
\begin{theorem}
	\label{lemma-non-uniform-hperbolic-solutions-second}
	Assume that there is a global nonuniform hyperbolic 
	solution $\xi$ for $\mathcal{S}_f$ and that $\mathcal{L}_f$ admits a nonuniform exponential dichotomy with bound 
	is $K(s)=De^{\nu|s|}$, for all $s\in \mathbb{R}$, and exponent $\alpha>\nu$.
	If $\varphi$ is a bounded global solution for $\mathcal{S}_f$, then $\varphi$
	satisfies
	\begin{equation*}
	\varphi(t)=\int_{-\infty}^{+\infty} G_f(t,\tau)  [f(\tau,\varphi(\tau)) - D_xf(\tau,\xi(\tau)) \varphi(\tau)] d\tau,
	\end{equation*}
	where $G_f$ is the Green function associated with the evolution process
	$\mathcal{L}_f$, 
	\begin{equation*}
	G_{f}(t,s)= \left\{ 
	\begin{array}{l l} 
	L_f(t,s)(Id_X-Q(s)), 
	&  \quad \hbox{if } t\geq s,
	\\ -L_f(t,s)Q(s) \, 
	& \quad \hbox{if } t<s.
	\end{array} 
	\right.
	\end{equation*} 
	Moreover, if $\xi$ is a bounded nonuniform hyperbolic solution of $\mathcal{S}_f$
	and 
	\begin{equation}\label{equation-condition-to-obtain-persisntence-on-f}
	\rho(\epsilon)=
	\sup_{\|x\|	\leq \epsilon}\sup_{t\in \mathbb{R}} 
	\frac{e^{\nu |t|} \, \|f(t,\xi(t)+x) -f(t,\xi(t) ) -D_xf(t,\xi(t) )x\| }{\|x\|}
	\to 0, \hbox{ as } \epsilon \to 0,
	\end{equation}
	then $\xi$ is \textbf{isolated} in the set of bounded and continuous functions
	$C_b(\mathbb{R}, X)$, i.e., there is a neighborhood of $\xi$ such that
	$\xi$ is the only bounded global solution of $\mathcal{S}_f$ with the
	trace inside of this
	neighborhood.
	
\end{theorem}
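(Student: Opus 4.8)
The plan is to split the statement into two parts: first the integral representation of an arbitrary bounded global solution $\varphi$ in terms of the Green function $G_f$, and then the isolation of the bounded hyperbolic solution $\xi$. For the first part, I would start from the identity in Remark \ref{lemma-non-uniform-hperbolic-solutions-first-lemma}, namely
\[
\varphi(t)=L_f(t,s)\varphi(s)+\int_{s}^{t}L_f(t,\tau)h(\tau)\,d\tau,\qquad t\geq s,
\]
where $h(\tau):=f(\tau,\varphi(\tau))-D_xf(\tau,\xi(\tau))\varphi(\tau)$. Applying the projections $Q(t)$ and $Id_X-Q(t)$ and using the commutation $Q(t)L_f(t,s)=L_f(t,s)Q(s)$, one writes $(Id_X-Q(t))\varphi(t)$ as an integral from $s$ to $t$ and $Q(t)\varphi(t)$ as an integral from $t$ to some later time, exactly as in the classical construction of the Green function solution. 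The key point is that since $\varphi$ is bounded and $\xi$ is bounded, the function $h$ is bounded, and together with the nonuniform dichotomy bounds $\|L_f(t,\tau)(Id_X-Q(\tau))\|\le De^{\nu|\tau|}e^{-\alpha(t-\tau)}$ and $\|L_f(t,\tau)Q(\tau)\|\le De^{\nu|\tau|}e^{\alpha(t-\tau)}$ together with $\alpha>\nu$, the boundary terms $L_f(t,s)(Id_X-Q(s))\varphi(s)$ (as $s\to-\infty$) and $L_f(t,r)Q(r)\varphi(r)$ (as $r\to+\infty$) tend to zero. Taking these limits yields the convergent integral $\varphi(t)=\int_{-\infty}^{+\infty}G_f(t,\tau)h(\tau)\,d\tau$.

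For the isolation part, suppose $\varphi$ is another bounded global solution of $\mathcal{S}_f$ whose trace lies in a small neighborhood of $\xi$, say $\sup_{t}\|\varphi(t)-\xi(t)\|\le\epsilon$. Set $w(t):=\varphi(t)-\xi(t)$. Since both $\varphi$ and $\xi$ satisfy the Green-function representation just established (the case of $\xi$ following from the last sentence of Remark \ref{lemma-non-uniform-hperbolic-solutions-first-lemma} combined with the first part), subtracting gives
\[
w(t)=\int_{-\infty}^{+\infty}G_f(t,\tau)\big[f(\tau,\xi(\tau)+w(\tau))-f(\tau,\xi(\tau))-D_xf(\tau,\xi(\tau))w(\tau)\big]\,d\tau.
\]
Now estimate: by the definition of $\rho(\epsilon)$ in \eqref{equation-condition-to-obtain-persisntence-on-f}, the bracketed term is bounded in norm by $\rho(\epsilon)e^{-\nu|\tau|}\|w(\tau)\|$. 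Inserting the dichotomy bounds on $G_f$, the factor $e^{\nu|\tau|}$ cancels against the $e^{-\nu|\tau|}$ from $\rho(\epsilon)$, leaving
\[
\|w(t)\|\le \rho(\epsilon)\,D\Big(\int_{-\infty}^{t}e^{-\alpha(t-\tau)}d\tau+\int_{t}^{+\infty}e^{-\alpha(\tau-t)}d\tau\Big)\sup_{\sigma}\|w(\sigma)\|=\frac{2D}{\alpha}\,\rho(\epsilon)\,\sup_{\sigma}\|w(\sigma)\|.
\]
Choosing $\epsilon>0$ small enough that $\frac{2D}{\alpha}\rho(\epsilon)<1$ forces $\sup_\sigma\|w(\sigma)\|=0$, i.e. $\varphi\equiv\xi$.

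The main obstacle I anticipate is the first part: justifying rigorously that the boundary terms vanish in the limit and that the bi-infinite integral converges. This requires carefully exploiting the nonuniform nature of the bound — the factor $e^{\nu|\tau|}$ in $K(\tau)$ is not harmless, and one must check that the decay $e^{-\alpha(t-\tau)}$ genuinely dominates it uniformly on the relevant half-lines; this is precisely where the hypothesis $\alpha>\nu$ is used, and the boundedness of $\varphi$ (not merely of $\xi$) is essential to control $h$. Once the representation is in place, the isolation argument is a routine contraction-type estimate, with the role of $\rho(\epsilon)$ being exactly to absorb the nonuniform weight so that the resulting geometric integrals are finite and independent of $t$.
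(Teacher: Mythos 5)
Your proposal is correct and follows essentially the same route as the paper: the Green-function representation is obtained by projecting the variation-of-constants identity with $Q(t)$ and $Id_X-Q(t)$, letting the boundary terms vanish at $\pm\infty$ via the dichotomy bounds and $\alpha>\nu$, and the isolation follows from the estimate $\sup_t\|\varphi(t)-\xi(t)\|\leq 2D\alpha^{-1}\rho(\epsilon)\sup_t\|\varphi(t)-\xi(t)\|$ with $\epsilon$ chosen so that $2D\alpha^{-1}\rho(\epsilon)<1$. The only minor caveat is that your justification of the boundedness of $h$ invokes the boundedness of $\xi$, which is only hypothesized in the second part of the statement; but this point is equally glossed over in the paper's own proof.
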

\begin{proof}
	\par If $\tau> t$ we have that
	\begin{equation}%\label{eq-th-perturbed-equation-VCF}
	\varphi(\tau)=L_f(\tau,t)\varphi(\tau)+\int_{t}^{\tau}L_f(t,s)
	[f(s,\varphi(s)) - D_xf(s,\xi(s)) \varphi(s)] ds.
	\end{equation}
	Thus, applying $Q(\tau)$ in the previous equation we obtain
	\begin{equation}%\label{eq-th-perturbed-equation-VCF}
	Q(\tau)\varphi(\tau)=L_f(\tau,t)Q(t)\varphi(t)+\int_{t}^{\tau}L_f(t,s)
	Q(s)[f(s,\varphi(s)) - D_xf(s,\xi(s)) \varphi(s)] ds.
	\end{equation}
	Now, use that $L_f(\tau, t)|_{R(Q(t))}$ is invertible with inverse 
	$L_f(t,\tau)$ so we obtain
	\begin{equation}%\label{eq-th-perturbed-equation-VCF}
	L_f(t,\tau)Q(\tau)\varphi(\tau)=Q(t)\varphi(t)+\int_{t}^{\tau}L_f(t,s)
	Q(s)[f(s,\varphi(s)) - D_xf(s,\xi(s)) \varphi(s)] ds.
	\end{equation}
	By definition we have that for $\tau>t$ suitable large
	\begin{equation*}
	\|L_f(t,\tau)Q(\tau)\varphi(\tau)\|\leq D e^{\nu |\tau|} e^{\alpha(t-\tau)} 
	\sup_{s\in \mathbb{R}}\|\varphi(s)\|\to 0, \hbox{ as } \tau\to +\infty.
	\end{equation*}
	Then
	\begin{equation}%\label{eq-th-perturbed-equation-VCF}
	Q(t)\varphi(t)=-\int_{t}^{+\infty}L_f(t,s)
	Q(s)[f(s,\varphi(s)) - D_xf(s,\xi(s)) \varphi(s)] ds.
	\end{equation}
	Similarly, for $t>\tau$, as 
	\begin{equation*}
	\|L_f(t,\tau)(Id_X-Q(\tau))\varphi(\tau)\|\leq  D e^{\nu |\tau|} e^{-\alpha(t-\tau)} 
	\sup_{s\in \mathbb{R}}\|\varphi(s)\| \to 0, \hbox{ as } \tau\to -\infty,
	\end{equation*}
	thus
	\begin{equation}
	(Id_X-Q(t))\varphi(t)=\int_{-\infty}^{t}L_f(t,s)
	Q(s)[f(s,\varphi(s)) - D_xf(s,\xi(s)) \varphi(s)] ds.
	\end{equation}
	Therefore, the result follows by writing
	$\varphi(t)=(Id_X-Q(t))\varphi(t)+Q(t)\varphi(t)$ and using the previous expressions.
	\par Finally, if $\varphi$ is a bounded solution of $\mathcal{S}_f$ with
	$\sup_{t\in \mathbb{R}}\|\varphi(t)-\xi(t)\|\leq \epsilon$, then
	\begin{equation*}
	\sup_{t\in \mathbb{R}} \|\varphi(t) -\xi(t)\| \leq 2 D\rho(\epsilon) \alpha^{-1} 
	\sup_{t\in \mathbb{R}} \|\varphi(t) -\xi(t)\|.
	\end{equation*}
	For $\epsilon>0$ such that
	$2 D\rho(\epsilon)\alpha^{-1}<1 $ we see that 
	$\varphi(t)=\xi(t)$ for all $t\in \mathbb{R}$.
\end{proof}

\begin{remark}
	The Green function $\mathcal{G}_f$ satisfies
	\begin{equation*}
	\|G_f(t,s)\|_{\mathcal{L}(X)}\leq De^{\nu|s|} e^{-\alpha|t-s|},
	\end{equation*}
	for all $t,s\in \mathbb{R}$.
\end{remark}

\par Now, as an application of Theorem 
\ref{th-roughness-continuous-TED} 
we prove a result on the \textit{persistence of nonuniform hyperbolic solutions}.

\begin{theorem}[Persistence of nonuniform hyperbolic solutions]
	\label{th-persistence}
	Let $f:\mathbb{R}\times X\rightarrow X$ continuous with first continuous derivatives, $\mathcal{T}$ a linear evolution processes and $\mathcal{S}_f$ be evolution process generated by $f$ and $\mathcal{T}$. Assume that
	\begin{enumerate}
		\item $\mathcal{T}$ satisfies
		 \begin{equation}\label{th-hypothesis-1}
		\sup_{ 0\leq t-s\leq 1}\{e^{-\nu|t|} \|T(t,s)\|_{\mathcal{L}(X)} \}<+\infty,
		\end{equation}
		\item there is a global nonuniform hyperbolic 
		solution $\xi$ for $\mathcal{S}_f$, i.e., 
		$\mathcal{L}_f$ admits a nonuniform exponential dichotomy with bound 
		 $K(s)=De^{\nu|s|}$, for all $s\in \mathbb{R}$, and exponent $\alpha>\nu$.
		\item  $\xi$ is bounded with $\sup_{t\in \mathbb{R}}\|\xi(t)\|\leq M$;
		\item $f$ satisfies Condition 
		\eqref{equation-condition-to-obtain-persisntence-on-f};
		\item the derivative of $f$ is small through $\xi$, i.e.,
		\begin{equation*}
		\sup_{s\in \mathbb{R}}\sup_{\|x\|\leq M}\{e^{\nu |s|} \|D_xf(s,x)\|_{\mathcal{L}(X)}\} <+\infty;
		\end{equation*}
		\item $g:\mathbb{R} \times X \rightarrow X$ is differentiable with continuous derivatives and such that
		\begin{equation}\label{{th-persistence-hypothesis-4}}
		\sup_{|x|\leq M}	\|f(t,x)-g(t,x)\|_X + \|D_xf(t,x)-D_xg(t,x)\|_{\mathcal{L}(X)}<e^{-3\nu |t|} \min\bigg\{\, \frac{\epsilon}{4D\alpha^{-1}},\delta\, \bigg\},
		\end{equation}
		where $\delta>0$ is the same of Theorem \ref{th-perturation-of-nununiform-exp-dcihotomy-for-differential-eq}.
	\end{enumerate}
	Then there exists a unique nonuniform hyperbolic solution $\psi$ for 
	$\mathcal{S}_g$ such that
	\begin{equation*}
	\sup_{t\in \mathbb{R}} \|\xi(t)-\psi(t)\|<\epsilon.
	\end{equation*}
	
\end{theorem}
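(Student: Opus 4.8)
The plan is to produce the candidate solution $\psi$ for $\mathcal{S}_g$ as a fixed point of a map defined through the Green function of $\mathcal{L}_f$, and then to check that it is genuinely a global solution of $\mathcal{S}_g$, that it is nonuniform hyperbolic, and that it is within $\epsilon$ of $\xi$. First I would set up the natural integral operator: motivated by Theorem \ref{lemma-non-uniform-hperbolic-solutions-second}, for a continuous function $\psi$ with $\sup_{t}\|\psi(t)-\xi(t)\|$ small, define
\begin{equation*}
(\mathcal{F}\psi)(t)=\xi(t)+\int_{-\infty}^{+\infty} G_f(t,\tau)\,\big[g(\tau,\psi(\tau))-f(\tau,\xi(\tau))-D_xf(\tau,\xi(\tau))(\psi(\tau)-\xi(\tau))\big]\,d\tau,
\end{equation*}
working in the complete metric space $B_\epsilon(\xi)=\{\psi\in C_b(\mathbb{R},X):\sup_t\|\psi(t)-\xi(t)\|\leq\epsilon\}$. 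Using the Green function bound $\|G_f(t,\tau)\|_{\mathcal{L}(X)}\leq De^{\nu|\tau|}e^{-\alpha|t-\tau|}$ from the remark following Theorem \ref{lemma-non-uniform-hperbolic-solutions-second}, together with hypothesis \eqref{{th-persistence-hypothesis-4}} (which absorbs the $e^{\nu|\tau|}$ factor via the $e^{-3\nu|\tau|}$ weight) and Condition \eqref{equation-condition-to-obtain-persisntence-on-f} controlling the nonlinear remainder of $f$, I would show $\mathcal{F}$ maps $B_\epsilon(\xi)$ into itself and is a contraction: the integrand splits into the term $g-f$ (bounded by the right side of \eqref{{th-persistence-hypothesis-4}}, contributing at most $\epsilon/4$ after multiplying by $\int De^{-\alpha|t-\tau|}d\tau=2D\alpha^{-1}$) and the term $f(\tau,\psi)-f(\tau,\xi)-D_xf(\tau,\xi)(\psi-\xi)$ (bounded using $\rho(\epsilon)\|\psi-\xi\|$ from \eqref{equation-condition-to-obtain-persisntence-on-f}, contributing a factor $2D\rho(\epsilon)\alpha^{-1}<1$ for $\epsilon$ small). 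This yields a unique fixed point $\psi\in B_\epsilon(\xi)$.

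Next I would verify that this fixed point $\psi$ is actually a global solution of $\mathcal{S}_g$. The standard argument: writing $w=\psi-\xi$ and using the integral equation for $\xi$ from Remark \ref{lemma-non-uniform-hperbolic-solutions-first-lemma}, one checks that $w$ satisfies the variation-of-constants formula with respect to $\mathcal{L}_f$ and the source $g(\tau,\psi(\tau))-f(\tau,\xi(\tau))-D_xf(\tau,\xi(\tau))w(\tau)$; differentiating (or rather, comparing integral equations over $[s,t]$) and using the cocycle property of $\mathcal{L}_f$ and the dichotomy identities $Q(t)L_f(t,s)=L_f(t,s)Q(s)$, one recovers that $\psi$ solves $\dot\psi=A(t)\psi+g(t,\psi)$, i.e. $S_g(t,s)\psi(s)=\psi(t)$ for all $t\geq s$. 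This is a routine but slightly delicate manipulation of the two Green-function branches, $t\geq\tau$ and $t<\tau$, to see that the boundary terms cancel correctly.

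Finally, to show $\psi$ is a \emph{nonuniform hyperbolic} solution of $\mathcal{S}_g$, I would look at the linearized process $\mathcal{L}_g$ along $\psi$, which satisfies
\begin{equation*}
L_g(t,s)=T(t,s)+\int_s^t T(t,\tau)D_xg(\tau,\psi(\tau))L_g(\tau,s)\,d\tau,
\end{equation*}
and compare it with $\mathcal{L}_f$ along $\xi$, which admits a nonuniform exponential dichotomy with bound $De^{\nu|s|}$ and exponent $\alpha>\nu$. Both are perturbations, via the variation-of-constants formula \eqref{eq-th-perturbed-equation-VCF}, of $\mathcal{T}$ by the bounded perturbations $D_xf(\cdot,\xi(\cdot))$ and $D_xg(\cdot,\psi(\cdot))$; hypothesis (1) gives $L_\mathcal{S}(\nu)<\infty$ for $\mathcal{T}$, and one estimates $\|D_xf(\tau,\xi(\tau))-D_xg(\tau,\psi(\tau))\|_{\mathcal{L}(X)}$ by splitting as $\|D_xf(\tau,\xi(\tau))-D_xg(\tau,\xi(\tau))\|+\|D_xg(\tau,\xi(\tau))-D_xg(\tau,\psi(\tau))\|$, the first bounded by \eqref{{th-persistence-hypothesis-4}} and the second controlled using continuity of $D_xg$ together with $\|\psi-\xi\|\leq\epsilon$ — here hypothesis (5) (smallness of $D_xf$ through $\xi$) and a continuity argument keep everything within the weighted bound $\delta e^{-3\nu|\tau|}$ needed for Theorem \ref{th-perturation-of-nununiform-exp-dcihotomy-for-differential-eq} to apply, first to confirm $\mathcal{L}_g$ admits a nonuniform exponential dichotomy directly, or alternatively by applying Theorem \ref{th-roughness-continuous-TED} to $\mathcal{L}_g$ viewed as a perturbation of $\mathcal{L}_f$. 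Uniqueness of $\psi$ inside the $\epsilon$-neighborhood follows from the isolation statement in Theorem \ref{lemma-non-uniform-hperbolic-solutions-second} applied to $g$ (whose relevant modulus $\rho_g$ is close to $\rho$ by \eqref{{th-persistence-hypothesis-4}}), or directly from the uniqueness of the fixed point of $\mathcal{F}$. The main obstacle I anticipate is the last step: ensuring that the perturbation of the linearized process along the \emph{new} solution $\psi$ (not $\xi$) still satisfies the weighted smallness condition with the \emph{same} weight $e^{-3\nu|t|}$, since $D_xg(\cdot,\psi(\cdot))$ differs from $D_xf(\cdot,\xi(\cdot))$ both because $g\neq f$ and because $\psi\neq\xi$; closing this requires combining \eqref{{th-persistence-hypothesis-4}}, hypothesis (5), and uniform continuity of $D_xg$ on the tube $\{\|x-\xi(t)\|\leq\epsilon\}$, and one must be careful that these estimates hold with constants independent of $t$ after accounting for the $e^{\nu|t|}$ growth weights.
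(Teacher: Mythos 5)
Your proposal is correct and follows essentially the same route as the paper's proof: a Banach fixed-point argument for the map built from the Green function of $\mathcal{L}_f$ on the $\epsilon$-ball around $\xi$ (you write it in terms of $\psi$ rather than $\phi=\psi-\xi$, which is only cosmetic), followed by an application of Theorem \ref{th-perturation-of-nununiform-exp-dcihotomy-for-differential-eq} to $\mathcal{L}_g$ viewed as a perturbation of $\mathcal{L}_f$ by $D_xf(\cdot,\xi(\cdot))-D_xg(\cdot,\psi(\cdot))$. The ``main obstacle'' you flag at the end --- that this perturbation compares the derivatives at \emph{different} base points $\xi(s)$ and $\psi(s)$, so hypothesis (6) alone does not immediately give the weighted bound $\delta e^{-3\nu|s|}$ --- is a genuine delicate point that the paper's own proof passes over silently, so your extra care there is warranted rather than a deviation.
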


\begin{proof}
	If $y$ is a global bounded solution for $\mathcal{S}_g$, then, as in Remark
	\ref{lemma-non-uniform-hperbolic-solutions-first-lemma},
	we have that
	\begin{equation}
	\begin{split}
	y(t)=L_f(t,s)y(s)+\int_{s}^{t}L_f(t,\tau)
	[g(\tau,y(\tau)) - D_xf(\tau,\xi(\tau)) y(\tau)] d\tau,\\
	\xi(t)=L_f(t,s)\xi(s)+\int_{s}^{t}L_f(t,\tau)
	[f(\tau,\xi(\tau)) - D_xf(\tau,\xi(\tau)) \xi(\tau)] d\tau.
	\end{split}
	\end{equation}
	Thus $\phi(t)=y(t)-\xi(t)$ satisfies the following integral equation 
	\begin{equation}\label{equation-5112}
	\phi(t)=L_f(t,s)\phi(s)+\int_{s}^{t}L_f(t,\tau)
	h(\tau,\phi(\tau)) d\tau,
	\end{equation}
	where
	$h(t,\phi(t))=g(t,\phi(t)+\xi(t)) -f(t,\xi(t))-D_xf(t,\xi(t))\phi(t)$.
	\par Then, by Theorem
	\ref{lemma-non-uniform-hperbolic-solutions-second},
	there exists a bounded solution of 
	\eqref{equation-5112} in 
	\begin{equation*}
	B_\epsilon:=\{\phi :\mathbb{R}\rightarrow X \, : \phi \hbox{ is continuous and } \sup_{t\in \mathbb{R}} \|\phi(t)\|<\epsilon\},
	\end{equation*}
	if and only if, the operator
	\begin{equation*}
	(\mathcal{F}\varphi)(t)=\int_{-\infty}^{+\infty} G_f(t,s)
	h(s,\varphi(s)) ds,
	\end{equation*}
	has a fixed point in the space $B_\epsilon$.
	\par Now, we use the fact that $\mathcal{L}_f$ admits a nonuniform exponential 
	dichotomy to show
	that $\mathcal{F}$ has a unique fixed 
	point in $B_\epsilon$ for suitable small $\epsilon>0$.
	In order to use the Banach fixed point Theorem, we have to prove that
	$\mathcal{F}$ is a contraction and that
	$\mathcal{F}B_\epsilon\subset B_\epsilon$.
	\par First, let $\phi \in B_\epsilon$ we have
	\begin{eqnarray*}
		\| (\mathcal{F} \phi)(t)\|_X &\leq& D \int_{-\infty}^{+\infty}
		e^{\nu |s|} e^{-\alpha|t-s|} \|h(s,\phi(s))\|_X ds\\
		&\leq&  2D\alpha^{-1} \sup_{t\in \mathbb{R}} e^{\nu|t|} \, \|g(t,\xi(t)+\phi(t))-f(t,\xi(t)+\phi(t))\|_X\\
		&+&2D\alpha^{-1} \epsilon \sup_{\|x\|\leq \epsilon}
		\sup_{t\in \mathbb{R}} \frac{
			e^{\nu|t|} \,\|f(t,\xi(t)+x)-f(t,\xi(t))-D_xf(t,\xi(t))x\|_X 	
		}{\|x\|_X} \\
		&\leq & \epsilon/2+2\alpha^{-1}D\rho(\epsilon) \epsilon.
	\end{eqnarray*}
	Thus, choosing $\epsilon>0$ such that
	$4\alpha^{-1}D\rho(\epsilon)<1$, we see that
	$\mathcal{F}\phi\in B_\epsilon$.
	Now, we show that $\mathcal{F}$ is a contraction. 
	In fact, with similar computations we are able to prove for
	$\phi_1,\phi_2\in B_\epsilon$ that
	\begin{equation*}
	\| (\mathcal{F} \phi_1)(t)-(\mathcal{F} \phi_2)(t)\|_X
	\leq \frac{1}{2} \sup_{t\in \mathbb{R}} \|\phi_1(t)-\phi_2(t)\|_X.
	\end{equation*}
	Therefore, there is a unique fixed point $\phi$ in $B_\epsilon$ and we obtain 
	$\psi=\phi+\xi$ a global solution of 
	$\mathcal{S}_g$.
	\par Finally, we prove that $\psi$ is a nonuniform hyperbolic solution, that means, the linear evolution process $\mathcal{L}_g:=\{L_g(t,s): t\geq s\}$ that satisfies
	\begin{equation*}
	L_g(t,\tau)=T(t,\tau)+\int_{\tau}^{t} T(t,s) 
	D_xg(s,\psi(s)) L_g(s,\tau) ds
	\end{equation*}
	admits a nonuniform exponential dichotomy.
	\par To that end, we show that 
	$\mathcal{L}_f$
	satisfies conditions of Theorem 
	\ref{th-perturation-of-nununiform-exp-dcihotomy-for-differential-eq}
	and we see $\mathcal{L}_g$ as a small perturbation of $\mathcal{L}_f$.
	Indeed, since $\mathcal{T}$ satisfies \eqref{th-hypothesis-1} and 
	\begin{equation*}
	L_f(t,s)=T(t,s)+\int_{s}^{t} T(t,\tau)D_xf(\tau,\xi(\tau)) L_f(\tau,s)d\tau,
	\end{equation*}
	from a Grownwall inequality and assumption (5) we see that 
	\begin{equation*}
	\sup_{ 0\leq t-\tau\leq 1} \{e^{-\nu|t|}\,\|L_f(t,\tau)\|_{\mathcal{L}(X)}\} <+\infty.
	\end{equation*}
	%		we define, for each $s\in \mathbb{R}$ fixed, 
	%		$\varphi(t):=e^{-\nu|t|}\,\|L_f(t,s)\|_{\mathcal{L}(X)}$
	%		its enough to use the boundedness of $f$ on $\xi$,
%	\begin{equation*}
%	\sup_{s\in \mathbb{R}}\sup_{\|x\|\leq M}\{e^{\nu |s|} \|D_xf(s,x)\|_{\mathcal{L}(X)}\} <+\infty.
%	\end{equation*}
	\par To complete the proof, note that
	\begin{equation*}
	L_g(t,\tau)=L_f(t,\tau)+\int_{\tau}^{t}
	L_f(t,s) [D_xf(s,\xi(s))-D_xg(s,\psi(s))] L_g(s,\tau)ds.
	\end{equation*}
	Now, define
	$B(s):=D_xf(s,\xi(s))-D_xg(s,\psi(s))$ for all $s\in \mathbb{R}$.
	Then, by hypothesis 
	\eqref{{th-persistence-hypothesis-4}},
	we use
	Theorem 
	\ref{th-perturation-of-nununiform-exp-dcihotomy-for-differential-eq}
	to conclude that $\psi$ is a nonuniform hyperbolic solution of $\mathcal{S}_g$.
\end{proof}

\section{Conclusions}

\par The method of discretization, results 
\ref{th-continuousTED-implies-discreteTED} and
\ref{th-discrete-dichotomy-implies-continuous-dichotomy}, 
allowed us to compare continuous and discrete dynamical systems that exhibit nonuniform hyperbolicity. This approach was already known in the case of uniform exponential dichotomies, see for example
\cite{Chow-Leiva-1,Henry-1}, and in this work we established it for the nonuniform case.
Through this procedure we obtain:
\begin{enumerate}
	\item Uniqueness of the family of projections: Corollary \ref{cor-uniqueness-projection-continuous}.
	\item Continuous dependence of projections: Theorem \ref{th-continuous-depende-projections}.
	\item Robustness of nonuniform exponential dichotomies: theorems \ref{th-roughness-continuous-TED} and 
	\ref{th-perturation-of-nununiform-exp-dcihotomy-for-differential-eq}.
\end{enumerate} 
\par A disadvantage of the discretization method is that it is not possible to consider 
nonlinear growth rates $\rho(t)$, as in Barreira and Valls \cite{Barreira-Valls-Robustness-noninvertible}. 
%	The expected requirement on $\rho$ is linearity in order to use discretization.
On the other hand, it was possible to prove the robustness result with the assumption $\alpha>\nu$, which seems to be the sharpest one. In fact, if the growth of the bound $De^{\nu|s|}$ is greater or equal than the exponent $\alpha>0$ we do not know if the robustness results hold true.
\par The continuous dependence of projections and the persistence of hyperbolic solutions play an important role in the study of continuity of local unstable and stable manifolds for an associated nonlinear evolution process. In \cite{Carvalho-Langa} they use these permanence results to conclude continuity of pullback attractors under perturbation. On the other hand, it is not clear yet how to apply the results of stability of nonuniform hyperbolicity in the theory of attractors. However the persistence of nonuniform hyperbolic solutions and continuous dependence of projections should be important to study continuity of invariant manifolds associated to the nonuniform hyperbolic solutions. This, in turn, will be crucial in a possible application in the theory of attractors.
%\par Theorem \ref{th-roughness-continuous-TED} provides an interpretation of the stability under perturbation as an open property.
%It points out which norm can be defined on the space of linear evolution processes 
%satisfying 
%\eqref{th-roughness-continuous-TED-hypothesis1} to obtain the robustness as an open property in the induced topology. Also,
%\par Regarding the nonlinear case, the uniform hyperbolic solutions play an crucial role  

\section*{Acknowledgments}
\par This work was carried out while Alexandre Oliveira-Sousa visited the Dpto. Ecuaciones Diferenciales y An\'alisis Num\'erico (EDAN), Universidad de Sevilla and he wants to acknowledge the warm reception from people of EDAN.
%specially, from Tom\'as Caraballo and Jos\'e A. Langa. 
We acknowledge the financial support from the following institutions: T. Caraballo and J. A. Langa by Ministerio de Ciencia, Innovaci\'on y Universidades (Spain), FEDER (European Community) under grant PGC2018-096540-B-I00, and by Proyecto I+D+i Programa Operativo FEDER Andaluc\'{\i}a US-1254251; A. N. Carvalho by S\~ao Paulo Research Foundation (FAPESP) grant 2018/10997-6, CNPq grant 306213/2019-2, and FEDER - Andalucía P18-FR-4509; and A. Oliveira-Sousa by S\~ao Paulo Research Foundation (FAPESP) grants 2017/21729-0 and 2018/10633-4, and CAPES grant PROEX-9430931/D.

\bibliographystyle{abbrv}
\bibliography{Bibliografia3}

\begin{thebibliography}{10}

\bibitem{Alhalawa}
M.~A. Alhalawa and D.~Dragicevi\'c.
\newblock New conditions for (non)uniform behaviour of linear cocycles over
  flows.
\newblock {\em Journal of Mathematical Analysis and Applications},
  473(1):367--381, 2019.

\bibitem{Barreira-Dragicevi-Valls}
L.~Barreira, D.~Dragicevi\'c, and C.~Valls.
\newblock Tempered exponential dichotomies: admissibility and stability under
  perturbations.
\newblock {\em Dynamical Systems}, 31(4):525--545, 2016.

\bibitem{BarreiraValls-Non}
L.~Barreira, C.~Silva, and C.~Valls.
\newblock Nonuniform behavior and robustness.
\newblock {\em Journal of Differential Equations}, 246(9):3579--3608, 2009.

\bibitem{Barreira-Valls-Sta}
L.~Barreira and C.~Valls.
\newblock {\em Stability of Nonautonomous Differential Equations}.
\newblock Springer, Berlin, 1998.

\bibitem{Barreira-Valls-R}
L.~Barreira and C.~Valls.
\newblock Robustness of nonuniform exponential dichotomies in {B}anach spaces.
\newblock {\em Journal of Differential Equations}, 244(10):2407--2447, 2008.

\bibitem{Barreira-Valls-Existence}
L.~Barreira and C.~Valls.
\newblock Existence of nonuniform exponential dichotomies and a {F}redholm
  alternative.
\newblock {\em Nonlinear Analysis: Theory, Methods and Applications},
  71(11):5220--5228, 2009.

\bibitem{Barreira-Valls-Robustness-noninvertible}
L.~Barreira and C.~Valls.
\newblock Robustness of non-invertible dichotomies.
\newblock {\em J. Math. Soc. Japan}, 67(1):293--317, 01 2015.

\bibitem{Carvalho-Langa-Robison-book}
A.~Carvalho, J.~Langa, and J.~Robinson.
\newblock {\em Attractors for infinite-dimensional non-autonomous dynamical
  systems}, volume 182.
\newblock Springer, New York, 2013.

\bibitem{Carvalho-Langa}
A.~N. Carvalho and J.~A. Langa.
\newblock Non-autonomous perturbation of autonomous semilinear differential
  equations: Continuity of local stable and unstable manifolds.
\newblock {\em Journal of Differential Equations}, 233(2):622--653, 2007.

\bibitem{Chow-Leiva-1}
S.~Chow and H.~Leiva.
\newblock Existence and roughness of the exponential dichotomy for skew-product
  semiflow in {B}anach spaces.
\newblock {\em Journal of Differential Equations}, 120(2):429--477, 1995.

\bibitem{Chow-Leiva-2}
S.-N. Chow and H.~Leiva.
\newblock Unbounded perturbation of the exponential dichotomy for evolution
  equations.
\newblock {\em Journal of Differential Equations}, 129(2):509--531, 1996.

\bibitem{Hale-Zhang}
J.~K. Hale and W.~Zhang.
\newblock On uniformity of exponential dichotomies for delay equations.
\newblock {\em Journal of Differential Equations}, 204(1):1--4, 2004.

\bibitem{Henry-1}
D.~Henry.
\newblock {\em Geometric Theory of Semilinear Parabolic Equations}.
\newblock Springer, Berlin, 1981.

\bibitem{Henry-2}
D.~Henry.
\newblock Exponential dichotomies, the shadowing lemma and homoclinic orbits in
  {B}anach spaces, dynamical phase transitions.
\newblock {\em Resenhas IME-USP}, pages 381--401, 1994.

\bibitem{Kato}
T.~Kato.
\newblock {\em Perturbation theory for linear operators}.
\newblock Springer-Verlag, Berlin, 1995.

\bibitem{Katok}
A.~Katok and B.~Hasselblatt.
\newblock {\em Introduction to the Modern Theory of Dynamical Systems}.
\newblock Cambridge University Press, 1995.

\bibitem{Pliss-Sell}
V.~A. Pliss and G.~R. Sell.
\newblock Robustness of exponential dichotomies in infinite-dimensional
  dynamical systems.
\newblock {\em Journal of Dynamics and Differential Equations}, 11(3):471--513,
  Jul 1999.

\bibitem{C-Robinson}
C.~Robinson.
\newblock {\em Dynamical Systems: Stability, Symbolic Dynamics, and Chaos}.
\newblock CRC Press, 1995.

\bibitem{Shub}
M.~Shub.
\newblock {\em Global stability of dynamical systems}.
\newblock Springer-Verlag, 1987.

\bibitem{Zhou-Lu-Zhang-1}
L.~Zhou, K.~Lu, and W.~Zhang.
\newblock Roughness of tempered exponential dichotomies for
  infinite-dimensional random difference equations.
\newblock {\em Journal of Differential Equations}, 254(9):4024--4046, 2013.

\end{thebibliography}

\end{document}